\newcommand{\C}{\mathbb{C}}
\newcommand{\R}{\mathbb{R}}
\newcommand{\Z}{\mathbb{Z}}
\newcommand{\N}{\mathbb{N}}
\newcommand{\Q}{\mathbb{Q}}
\newcommand{\bK}{\mathbb{K}}
\newcommand{\bT}{\mathbb{T}}
\newcommand{\liet}{\mathfrak{t}}
\newcommand{\lieg}{\mathfrak{g}}
\newcommand{\RQ}{R_\Q}
\newcommand{\RFQ}{R_{F,\Q}}
\newcommand{\cR}{\mathcal{R}}
\newcommand{\cRQ}{\mathcal{R}_{\mathbb{Q}}}
\newcommand{\sgn}[1]{#1^{\text{sgn}}}
\newcommand{\Cuntz}[1]{\mathcal{O}_{#1}}
\newcommand{\Aut}[1]{\text{\normalfont Aut}(#1)}
\newcommand{\id}[1]{\operatorname{id}_{#1}}
\newcommand{\EV}[1]{\text{EV}(#1)}
\newcommand{\cA}{\mathcal{A}}
\newcommand{\cG}{\mathcal{G}}
\newcommand{\cV}{\mathcal{V}}
\newcommand{\sfX}{\mathsf{X}}
\newcommand{\cE}{\mathcal{E}} % Fell bundles
\newcommand{\cL}{\mathcal{L}}
\newcommand{\cF}{\mathcal{F}} % higher twist over torus
\newcommand{\Eig}[2]{\text{Eig}(#1,#2)}
\newcommand{\Endo}[1]{\operatorname{End}\left(#1\right)}
\newcommand{\op}{\text{op}}
\newcommand{\lscal}[3]{\tensor*[_{#1}]{\langle #2, #3 \rangle}{}}
\newcommand{\rscal}[3]{\tensor*{\langle #1, #2 \rangle}{_{#3}}}
\newcommand{\rcpt}[2]{\mathcal{K}_{#1}\left(#2\right)}
\newcommand{\rbdd}[2]{\mathcal{L}_{#1}\left(#2\right)}
\newcommand{\Viso}{\mathcal{V}^{{\rm iso}}_{\C}}
\newcommand{\Vgr}{\mathcal{V}^{{\rm gr}}_{\C}}
\newcommand{\MF}{\mathsf{M}_\mathnormal{F}^{\infty}}
\newcommand{\MFfac}[1]{\mathsf{M}_\mathnormal{F,#1}^{\infty}}
\newcommand{\Waff}{W^{\text{\normalfont aff}}}
\DeclareMathOperator{\Ad}{Ad}
\newcommand{\Udet}[1]{U_{\tau}(#1)}
\newcommand{\GLres}[1]{GL_{\rm res}(#1)}
\newcommand{\Ures}[1]{U_{\rm res}(#1)}
\newcommand{\Uidres}[1]{U^0_{\rm res}(#1)}
\newcommand{\GLidres}[1]{GL^0_{\rm res}(#1)}
\newtheorem{theorem}{Theorem}[section]
\newtheorem{lemma}[theorem]{Lemma}
\newtheorem{corollary}[theorem]{Corollary}
\newtheorem{prop}[theorem]{Proposition}
\theoremstyle{definition}
\newtheorem{definition}[theorem]{Definition}
\newtheorem{remark}[theorem]{Remark}
\newtheorem{example}[theorem]{Example}
\newcommand{\extp}{\@ifnextchar^\@extp{\@extp^{\,}}}
\def\@extp^#1{\mathop{\bigwedge\nolimits^{\!#1}}}
\begin{document}
\title[Spectral Sequence Computation of Higher Twisted $K$-Groups]{Spectral Sequence Computation of \\ Higher Twisted $K$-Groups of $ SU(n)$}
\author{David E.\ Evans \and Ulrich Pennig}
\address{Cardiff University, School of Mathematics, Senghennydd Road, Cardiff, CF24 4AG, Wales, UK}
\email{EvansDE@cardiff.ac.uk}
\email{PennigU@cardiff.ac.uk}

%Motivated by the Freed-Hopkins-Teleman theorem we study graded equivariant higher twists of $K$-theory for the groups $G = SU(n)$ induced by exponential functors. We compute the rationalisation of these groups for all $n$ and all non-trivial functors $F$ using the Mayer-Vietoris spectral sequence. Similar to the classical case only the $K$-theory in degree $\dim(G)$ is non-trivial and the non-vanishing group is a quotient of a localisation of the representation ring $R(G) \otimes \Q$ by a higher fusion ideal~$J_{F,\Q}$. We give generators for this ideal and prove that these can be obtained as derivatives of a potential.   

\begin{abstract}
Motivated by the Freed-Hopkins-Teleman theorem we study graded equivariant higher twists of $K$-theory for the groups $G = SU(n)$ induced by exponential functors. We compute the rationalisation of these groups for all $n$ and all non-trivial functors. Classical twists use the determinant functor and yield equivariant bundles of compact operators that are classified by Dixmier-Douady theory. Their equivariant $K$-theory reproduces the Verlinde ring of conformal field theory. Higher twists give equivariant bundles of stable UHF algebras, which can be classified using stable homotopy theory. Rationally, only the $K$-theory in degree $\dim(G)$ is again non-trivial. The non-vanishing group is a quotient of a localisation of the representation ring $R(G) \otimes \Q$ by a higher fusion ideal~$J_{F,\Q}$. We give generators for this ideal and prove that these can be obtained as derivatives of a potential. For the exterior algebra functor, which is exponential, we show that the determinant bundle over $LSU(n)$ has a non-commutative counterpart where the fibre is the unitary group of the UHF algebra.
\end{abstract}

\maketitle

\section{Introduction}

\subsection{History and Motivation}
$K$-theory has its roots in Grothendieck's generalisation of the Riemann-Roch theorem in the 1950s, which he formulated in the language of algebraic geometry and coherent sheaves \cite{paper:GrothendieckRiemannRoch}. Building on these foundational insights, Atiyah and Hirzebruch recognised that similar principles could be applied in a purely topological setting, leading them to develop topological $K$-theory \cite{paper:AtiyahHirzebruch,book:Atiyah}. For a compact Hausdorff space $X$ the group $K^0(X)$ is the group completion of the monoid obtained from the isomorphism classes of complex, finite-dimensional vector bundles over~$X$ under the direct sum. What transforms this algebraic construction into a powerful topological invariant is its extension to a cohomology theory $X \mapsto K^*(X)$ with values in $\Z$-graded abelian groups. Bott periodicity provides natural isomorphisms $K^i(X) \cong K^{i+2}(X)$ that dramatically simplify the theory by reducing all computations to just two degrees. It turns the long exact sequence of pairs $(X,Y)$ with $Y \subseteq X$ involving the relative $K$-groups $K^*(X,Y)$ into the characteristic six-term exact sequence that makes $K$-theory computations tractable. 

The importance of topological $K$-theory was highlighted by the foundational work of Atiyah and Singer on index theory \cite{paper:AtiyahSingerI, paper:FreedAS}: principal symbols of elliptic differential operators naturally define classes in $K^0(T^*M)$, the index of such an operator can be computed through a pairing between $K$-theory and its dual $K$-homology, and families of operators parametrised by a topological space $X$ have indices living in $K^0(X)$. Applications in physics include quantum fields as section of vector bundles and $D$-brane charges in string theory \cite{paper:WittenDBranes}.
 % add references

By now there are many variations on the theme of $K$-theory. The most relevant one for this paper is the operator-algebraic $K$-theory of $C^*$-algebras. For a unital $C^*$-algebra $A$ the group $K_0(A)$ is defined as the group completion of isomorphism classes in a suitable category of projective $A$-modules. The functor $A \mapsto K_0(A)$ has a $\Z$-graded extension $A \mapsto K_*(A)$ that is defined on the category of all $C^*$-algebras. It is homotopy-invariant, satisfies Bott periodicity and turns a short exact sequence of $C^*$-algebras into a six-term exact sequence of abelian groups. For a locally compact Hausdorff space $X$ we have a natural isomorphism $K_*(C_0(X)) \cong K^*(X)$ by the Serre-Swan theorem. In the noncommutative geometry \cite{book:Connes} of Alain Connes, $K$-theory of noncommutative $C^*$-algebras led to index theorems for foliations. The classification of amenable $C^*$-algebras \cite{paper:Elliott} is $K$-theoretic. Operator-algebraic $K$-theory also made a recent appearance in condensed matter physics and topological insulators \cite{book:PS-B}.

Twisted $K$-theory admits two complementary definitions that illuminate different aspects of its nature: a first one through non-commutative operator algebras, and a second via stable homotopy theory. By the Serre-Swan theorem and stability of $K$-theory we have 
\[
	K^n(X) \cong K_n(C_0(X) \otimes \bK)
\]
where $\bK$ denotes the compact operators on an infinite-dimensional separable Hilbert space. The algebra $C_0(X) \otimes \bK$ can be interpreted as section algebra of the trivial $\bK$-bundle over $X$. Replacing this by a non-trivial bundle creates a theory that locally agrees with $K$-theory but is ``twisted'' globally, while preserving the structure of being a module over its untwisted version. A result by Dixmier and Douady \cite{paper:DD} states that locally trivial bundles with fibres isomorphic to $\bK$ are classified by third cohomology, i.e.
\[
	[X,B\Aut{\bK}] \cong H^3(X,\Z)\ ,
\]
where $B\Aut{\bK}$ denotes the classifying space of the automorphism group (equipped with the point-norm topology). Given such a bundle $\mathcal{K} \to X$ over a locally compact Hausdorff space $X$, the twisted $K$-groups are the operator-algebraic $K$-theory $K_*(C_0(X,\mathcal{K}))$ of the section algebra $C_0(X,\mathcal{K})$. More generally, one can consider continuous trace $C^*$-algebras instead of locally trivial bundles. Background fields in quantum field theory and string theory are described by Dixmier-Douady invariants~\cite{paper:BV}.

From the viewpoint of stable homotopy theory there is a larger class of twists for $K$-theory than the geometric ones classified by $H^3(X,\Z)$: the tensor product turns $X \mapsto K^*(X)$ into a ring-valued functor. From the perspective of stable homotopy theory $K$-theory is represented by a spectrum, usually denoted $KU$. The ring structure on $K^*(X)$ lifts to $KU$ and turns it into an $E_\infty$-ring spectrum -- the counterpart of a commutative ring in stable homotopy theory. Just as such a ring~$R$ has a group of units formed by the invertible elements in $R$, every $E_\infty$-ring spectrum $E$ has a spectrum of units $gl_1(E)$ formed by spaces $GL_1(E)$, $BGL_1(E)$, etc.\ with an associated cohomology theory $X \mapsto gl_1(E)^*(X)$, for which $gl_1(E)^0(X) \cong GL_1(E^0(X))$. 

The idea of twisted $K$-theory is to replace $KU$ by a bundle of rank $1$-module spectra over $KU$ and then take homotopy classes of sections. An approach to twisted $K$-theory based on stable $\infty$-categories, where this is made precise can be found in \cite[Sec.~3]{paper:AndoBlumbergGepner}. For a commutative ring $R$ the rank $1$-module bundles are classified by $[X, BGL_1(R)]$. Similarly, bundles of $KU$-modules with rank $1$ correspond (up to isomorphism) to elements in 
\[
	gl_1(KU)^1(X) = [X, BGL_1(KU)]\ .
\]
In this picture the geometric twists arise as the ones that factor through a map $K(\Z,3) \to BGL_1(KU)$, which induces an isomorphism on $\pi_3$. Indeed, we have $[X, K(\Z,3)] \cong H^3(X,\Z)$. 

In joint work with Dadarlat the second author has shown that analogous operator-algebraic pictures exist for the higher (non-equivariant) twists of $K$-theory and its localizations \cite{paper:DadarlatPennig1,paper:DadarlatPennig2}. In particular,
\begin{align}
	[X,B\Aut{\Cuntz{\infty} \otimes \bK}] &\cong gl_1(KU)^1(X) \label{eqn:Cuntz}\ ,\\
	[X,B\Aut{M_n^{\otimes \infty} \otimes \bK}] &\cong gl_1(KU[1/n])_+^1(X) \label{eqn:UHF}\ , 	
\end{align} 
where $gl_1(KU[1/n])_+^*$ is the cohomology theory associated to the infinite loop space given by the pullback diagram 
\[
	\begin{tikzcd}[row sep=0.5cm,column sep=0.5cm,every matrix/.append style = {nodes = {font=\footnotesize}}]
		GL_1(KU[1/n])_+ \ar[r] \ar[d] & GL_1(KU[1/n]) \ar[d] \\
		\pi_0(GL_1(KU[1/n])) \cap \Q_+ \ar[r] & \pi_0(GL_1(KU[1/n])) \cong GL_1(\Z[1/n])	
	\end{tikzcd}
\]

Equivariant operator-algebraic $K$-theory is an invariant of $C^*$-dynamical systems, i.e.\ $C^*$-algebras with group actions. The groups $K_*^G(A)$ provide a much finer invariant than $K_*(A)$. Equivariant $K$-theory is relevant for classifying such actions on amenable $C^*$-algebras \cite{paper:GabeSzabo}. In physics it arises through the Verlinde ring in two dimensional chiral conformal field theory which describes the fusion of primary quantum fields \cite{paper:Verlinde}. It appears in the $2$-dimensional conformal Wess-Zumino-Witten models as well as in $3$-dimensional Chern-Simons theory.

The connection between the Verlinde ring of loop groups and equivariant $K$-theory was made by Freed, Hopkins and Teleman \cite{paper:FreedHopkinsTelemanI,paper:FreedHopkinsTelemanII,paper:FreedHopkinsTelemanIII}: Let $G$ be a compact, simple and simply-connected Lie group and let $LG$ be its free loop group, i.e.\ the group of all smooth maps $\gamma \colon S^1 \to G$. Even though $LG$ is infinite-dimensional it has a rich representation theory formed by the positive energy representations at a fixed level $k \in \Z$. After group completion with respect to the direct sum they form a commutative ring $R_k(LG)$ under the fusion product. Many of the most interesting features of $1+1$-dimensional quantum field theories arise from close links to this representation theory as outlined for example in \cite{book:PressleySegal,paper:CareyLangmann,paper:Wassermann,paper:EvansKawahigashi}. Freed, Hopkins and Teleman constructed a ring isomorphism 
\begin{equation} \label{eqn:FHT}
	R_k(LG) \cong \tensor*[^{\tau(k)}]{K}{_{G}^{\dim(G)}}(G)\ ,
\end{equation}
where the right hand side denotes the $G$-equivariant twisted $K$-theory of~$G$ in degree $\dim(G)$ with twist $\tau(k)$ depending on the level $k$ and with respect to the adjoint action of $G$ on itself, which can be realised as the operator-algebraic $K$-theory of an equivariant bundle of compact operators over $G$. The twist is compatible with the multiplication $\mu \colon G \times G \to G$ in the sense that $\mu^*\tau(k) \cong p_1^*\tau(k) + p_2^*\tau(k)$, where $p_1,p_2$ are the projections. The multiplicative structure on the right hand side is the Pontrjagin product induced by a wrong-way map in equivariant twisted $K$-theory associated to $\mu$ (see also \cite{paper:Tu-ringstructure,paper:Waldorf}). 

Since the ring structure of $R_k(LG)$ determines the fusion rules of the Verlinde ring of the chiral conformal field theory associated to the loop group~$LG$, it is a natural question which other invariants of the CFT can be recovered from it. In joint work with Gannon the first named author showed that the full system, and in particular the modular invariant partition function is encoded in the equivariant twisted $K$-theory \cite{paper:EvansGannonI,paper:EvansGannonII}. Other fusion categories like the ones constructed in \cite{paper:TambaraYamagami} have elegant $K$-theoretical descriptions as well as shown in \cite{paper:EvansGannon-loopgroups}.

%TODO: There is a gap here now that needs to be fixed.

%\pagebreak[10]
\subsection{Results in this paper}
Up to isomorphism the geometric twists of the equivariant $K$-theory of a simply-connected group $G$ are classified by the group $H^3_G(G,\Z) \cong H^3(G,\Z) \cong \Z$. As explained in the previous section, and already noted by Atiyah and Segal in \cite{paper:AtiyahSegal}, at least non-equivariantly, stable homotopy theory hands us a much larger group of twists. 

In light of this and the observations from the previous section, several key questions emerge:
\begin{enumerate}[i)]
	\item Is there an equivariant extension of the operator-algebraic models?
	\item Is the equivariant twist $\tau(k)$ that appears in \eqref{eqn:FHT} the shadow of a more general construction involving equivariant higher twists?
	\item What are the consequences for conformal field theories associated to loop groups?
\end{enumerate}
We initiated a programme to investigate the first question in \cite{paper:EvansPennig-circle} starting with circle actions on infinite UHF-algebras. Even though we will leave a complete answer to question~iii) to be discussed in future work, we will briefly come back to it in Sec.~\ref{subsec:HigherDet}. A variation of the second question appeared for example in \cite{paper:Teleman-moduli}, where higher twists of $K_G^*(X)[[t]]$, i.e.\ the power series ring over equivariant $K$-theory, were considered.

In this paper we will focus on a different approach to question~ii) that we developed in \cite{paper:EvansPennig-Twists}. For a simply connected Lie group $G$ the generator of $H^3_G(G,\Z) \cong H^3(G,\Z) \cong \Z$ corresponds to the basic gerbe over $G$ \cite{paper:Meinrenken-basicgerbe, paper:MurrayStevenson}. 

A gerbe over a space $X$ is a higher-categorical generalisation of a line bundle. It is given by a hermitian line bundle $L \to \cG$ over a groupoid $\cG$. This groupoid is Morita equivalent to the trivial groupoid with object space~$X$ and only identity morphisms. To be a gerbe the line bundle $L$ needs to come equipped with a multiplicative structure covering the groupoid multiplication, i.e.\ we have a bundle isomorphism
\[
	\pi_1^*L \otimes \pi_2^*L \to m^*L\ ,
\]   
which is associative in the obvious sense. The maps $\pi_j \colon \cG^{(2)} \to \cG$ and $m \colon \cG^{(2)} \to \cG$ denote the two projections and the multiplication, respectively. For details, we refer the reader to \cite{paper:Murray-bundlegerbes}.

Murray and Stevenson found a construction of the gerbe $L_k \to \cG$ at level $k \in \Z$ for the unitary groups \cite{paper:MurrayStevenson}. In their setting the groupoid $\cG$ can be chosen to be locally compact and given a Haar system. But then $L_k \to \cG$ is an example of a saturated Fell bundle, see \cite[Sec.~2.1]{paper:Kumjian}. As such it has an associated section convolution algebra $C^*(L_k)$ and it turns out that 
\[
	C^*(L_k) \otimes \bK \cong C(G, \mathcal{K}_k)
\] 
for a locally trivial bundle $\mathcal{K}_k \to G$ with fibre $\bK$. Since $L_k \to \cG$ can be equipped with a group action that is compatible with the conjugation action of $G$ on itself, the same turns out to be true for $\mathcal{K}_k \to G$, which allows us to express the $K$-groups from the beginning in terms of operator algebras
\begin{equation} \label{eqn:twistedK-gerbe}
	\tensor*[^{\tau(k)}]{K}{_{G}^{\dim(G)}}(G) \cong K_{\dim(G)}^G(C^*(L_k))\ .
\end{equation}

The fibres of $L_k$ are constructed by applying powers of the determinant functor to eigenspaces of the underlying group elements, where the level enters as the exponent. The crucial observation in \cite{paper:EvansPennig-Twists} is that the only properties of this functor that are actually needed are: 
\begin{enumerate}[(a)]
	\item it maps all objects to~$\C$, 
	\item it naturally transforms direct sums into tensor products and
	\item it is continuous and preserves adjoints.
\end{enumerate}
We obtain interesting new examples of higher twists over $G = SU(n)$ by giving up property (a), but keeping (b) and~(c). More precisely, we change the functor $\left(\extp^{\rm top}\right)^{\otimes (n+k)}$ from the classical setting to an exponential functor 
\[
	F \colon (\Viso,\oplus) \to (\Vgr, \otimes)
\]
on complex inner product spaces and unitary isomorphisms (see Def.~\ref{def:exp_func}). Compared to \cite{paper:EvansPennig-Twists} we will also modify our setting slightly and consider exponential functors that preserve the symmetries on both sides and take values in super-vector spaces. We will see that all examples that have been discussed for example in \cite{paper:Pennig-RMatrices,paper:EvansPennig-Twists} fit much more naturally into this new setup. 

One-dimensional representations provide invertible elements in the representation ring. This is crucial for equivariant bundle gerbes to work. Therefore giving up (a) means that we need to turn higher-dimensional representations into units in equivariant $K$-theory. We achieve this by swapping vector spaces (i.e.\ modules over $\C$) for bimodules over the infinite UHF-algebra
\[
	\MF = \Endo{F(\C^n)}^{\otimes \infty}
\]
and the gerbe $L_k \to \cG$ for a saturated Fell bundle $\cE \to \cG$, whose fibres are invertible $\MF$-$\MF$-bimodules and with multiplication
\[
	\pi_1^*\cE \otimes_{\MF} \pi_2^*\cE \to m^*\cE\ .
\]
One of the upshots of our construction is that equivariance is preserved. In particular, there is a $G$-action on $\cG$ and $\cE$ such that the bundle projection is $G$-equivariant. This turns the section algebra $C^*(\cE)$ into a $\Z/2\Z$-graded $G$-$C^*$-algebra, and we define the equivariant higher twisted $K$-theory of $G = SU(n)$ with twist given by the exponential functor $F$ to be the graded $K$-groups $K_*^G(C^*\cE)$ in analogy to \eqref{eqn:twistedK-gerbe}. It was shown in \cite[Cor.~4.7]{paper:EvansPennig-Twists} that 
\[
	C^*\cE \otimes \bK \cong C(G,\mathcal{A})
\]
for a locally trivial bundle $\mathcal{A} \to G$ with fibres isomorphic to $\MF \otimes \bK$. Up to stabilisation and neglecting equivariance our construction therefore gives a higher twist similar to the ones in \eqref{eqn:UHF}. In contrast to the classical case, where the twist corresponds to an integer (the level), these new twists are parametrised by exponential functors. An in-depth analysis in the ungraded case can be found in~\cite{paper:Pennig-RMatrices}. We return to this point in Sec.~\ref{subsec:HigherDet} where we will see that an exponential functor gives rise to a bundle
\[	
	U(\MF) \to \widetilde{LSU}(n) \to LSU(n)
\]
that is the non-commutative counterpart of the determinant bundle. Classically, this corresponds to the central $U(1)$-extensions classified by the level. 

Replacing the algebra $\C$ by $\MF$ has the effect that the groups $K_*^G(C^*\cE)$ become modules over the localised representation ring 
\[
	K_0^G(\MF) \cong R(G)[F(\rho)^{-1}] =: R_F(G)\ ,
\]
where $G = SU(n)$ and $\rho \colon G \to U(n)$ denotes the inclusion. 

We have shown in \cite[Thm.~5.3]{paper:EvansPennig-Twists} that in the non-graded setting for $G=SU(2)$ and under very mild assumptions on the exponential functor $F$ the equivariant higher twisted $K$-groups satisfy
\begin{align*}
	K_0^G(C^*\cE) &= 0\ , \\
	K_1^G(C^*\cE) &\cong R_F(G)/J_F\ ,
\end{align*}
where $J_F$ is the higher fusion ideal generated by the $SU(2)$ representation corresponding to the character polynomial $\chi_F \in \Z[t,t^{-1}]$ with
\[
	\chi_F = \frac{1}{t - t^{-1}} \,\det \begin{pmatrix}
		F(t) & F(t^{-1}) \\
		1 & 1
	\end{pmatrix}\ .
\]
As explained in \cite[Thm.~5.16]{paper:EvansPennig-Twists} a similar result also holds for $G = SU(3)$ after rationalisation, i.e.\ $K_{\dim(G)}^G(C^*\cE) \otimes \Q \cong (R_F(G) \otimes \Q)/J_{F,\Q}$, where $J_{F,\Q}$ has two generators, whose characters can be expressed in a similar way as~$\chi_F$ above. Both of these results also hold in the graded setting of this paper with the only change that $F(t) \in R(\bT)$ is now a graded representation.  

In the present paper we are now able to complete the picture and compute the rationalised graded equivariant higher twisted $K$-theory for the groups~$G = SU(n)$ and all non-trivial exponential functors $F$. More precisely, we show in our main result, Thm.~\ref{thm:higher_twists}, that the graded higher twisted $K$-groups are
\begin{align} \label{eqn:twistedK-intro}
	K^G_{\dim(G)}(C^*\cE) \otimes \Q &\cong R_F(G) \otimes \Q / J_{F,\Q} \ ,\\
	K^G_{\dim(G)+1}(C^*\cE) \otimes \Q &= 0 \notag 
\end{align}
for an ideal $J_{F,\Q} \subseteq R_F(G) \otimes \Q$. 

With $F = (\extp^{\rm top})^{\otimes (n + k)}$ for $G = SU(n)$ the algebra $\Endo{F(\C^n)}$ is trivially graded (i.e.\ $\MF \cong \C$). As explained in Sec.~\ref{subsec:classical_twists} the $C(G)$-algebra $C^*\cE$ then has graded compact operators as fibres. However, it represents the same twist of $K$-theory as in \eqref{eqn:FHT} in the graded Brauer group $H^1(G,\Z/2\Z) \times H^3(G,\Z)$ of \cite{paper:Maycock}, because $H^1(G,\Z/2\Z)$ vanishes. Indeed, we recover the classical Verlinde ring of $SU(n)$ at level $k$ in accordance with~\eqref{eqn:FHT}.

We prove that the higher fusion ideal $J_{F,\Q}$ has $n-1$ generators constructed as follows: Let $\bT \subseteq G$ be the maximal torus given the by diagonal matrices. Note that its representation ring satisfies $R(\bT) \cong \Z[t_1, \dots, t_n]/(1 - t_1\cdots t_n)$ and $R(G) \cong R(\bT)^W$, where $W \cong S_n$ is the Weyl group of $G$ acting on $R(\bT)$ by permuting the variables. A set of generators of $J_{F,\Q}$ is then given by the character polynomials
\begin{gather*}
	\frac{q_i}{\Delta} \in R(\bT)^W \text{ for } i \in \{0,\dots, n-2\} \text{ with }\\
	q_i(t_1, \dots, t_n) = \det
	\begin{pmatrix}
		F(t_1)\,t_1^i & F(t_2)\,t_2^i & \dots & F(t_n)\,t_n^i \\
		t_1^{n-2} & t_2^{n-2} & \dots & t_n^{n-2} \\
		\vdots & \vdots & \ddots & \vdots \\
		t_1 & t_2 & \dots & t_n 
	\end{pmatrix}\ ,
\end{gather*}
where $\Delta$ is the Vandermonde determinant. So, at least rationally, the equivariant higher twisted $K$-groups are in fact quotient rings of the localised representation ring $R_F(G) \otimes \Q$ by an ideal $J_{F,\Q}$. Generators of the classical fusion ideal for $SU(n)$ at level $k$ have been computed for example by Douglas in \cite[Thm.~1.1]{paper:Douglas-fusionrings} (using $K$-homology instead of $K$-theory). For $F = (\extp^{\rm top})^{\otimes (n+k)}$ our generators correspond to the highest weight representations with weight $(k+i)\,\omega_1$ for $i = \{1, \dots, n-1\}$ by the Weyl character formula where $\omega_1$ is the first fundamental weight of $SU(n)$ (see Rem.~\ref{rem:gen_classical}). It is shown in \cite[Sec.~3.2]{paper:BouwknegtRidout} how one can transform Douglas' set of generators into ours using the Jacobi-Trudy identity. 

As in \cite{paper:AdemCantareroGomez, paper:EvansPennig-Twists} the isomorphisms \eqref{eqn:twistedK-intro} are obtained using the Mayer-Vie\-to\-ris spectral sequence. We compare its $E^1$-page to a cochain complex that computes the $\Waff$-equivariant Bredon cohomology $H^*_{\Waff}(\liet, \cR)$ of $\liet$ with a certain local coefficient system $\cR$ (see Lem.~\ref{lem:cochain_complex}). This comparison does not require rationalisation. It is only the computation of $H^*_{\Waff}(\liet, \cR)$ that is simplified after killing torsion. In addition, the generators of the ideal $J_{F,\Q}$ live in $R_F(G)$. We therefore conjecture that the rationalisation is only a technical difficulty and that the results should also hold integrally.

Gepner discovered in \cite{paper:Gepner} that the fusion ring $R_k(LSU(n))$ is closely related to the cohomology ring of the Grassmann manifolds $G_{k}(\C^{n+k})$ of $k$-dimensional subspaces in $\C^{n+k}$. We have 
\begin{align*}
	G_{k}(\C^{n+k}) &\cong U(n+k)/U(n) \times U(k) \ ,\\
	H^*(G_{k}(\C^{n+k}), \Z) &\cong \Z[\overline{c}_1, \dots, \overline{c}_n]/ (c_{k+1}, \dots, c_{k+n})\ .
\end{align*}
To understand the generators and relations note that there are two non-trivial canonical vector bundles over $G_k(\C^{n+k})$: the tautological bundle and the quotient bundle of its embedding into the trivial bundle. The generators~$\overline{c}_i$ are the Chern classes of the quotient bundle. The ideal can be obtained by expressing the elements $c_j$ in terms of the $\overline{c}_i$ using the identity
\[
	(1 + \overline{c}_1 + \dots + \overline{c}_n) \cdot (1 + c_1 + \dots + c_{k+n}) = 1\ .
\]
In fact, the ideal defining this cohomology ring is an example of a Jacobian ideal, i.e.\ its generators can be obtained as derivatives of a potential $V_{n+k+1}(\overline{c}_1, \dots, \overline{c}_n)$ in such a way that 
\[
	\frac{\partial V_{n+k+1}}{\partial \bar{c}_{n-i}} = (-1)^{n-i+1} c_{k + i + 1} \ .
\] 
for $i \in \{0, \dots, n-1\}$, see \cite{paper:Gepner,paper:Intriligator}. The fusion ring $R_k(LSU(n))$ has a very similar algebraic structure. It is a quotient of $\Z[\overline{c}_1, \dots, \overline{c}_n]$ by the ideal generated from derivatives of the potential $V_{n+k}$ under the additional constraint that $\overline{c}_n = 1$. This constraint may be built into the potential as a perturbation as outlined in \cite{paper:Intriligator}. 

In Sec.~\ref{sec:potential} of this paper we show that a large part of this rigid algebraic structure is preserved when changing from classical equivariant twisted $K$-theory (giving $R_k(LSU(n))$ by \eqref{eqn:FHT}) to equivariant higher twists. We construct a potential $V$ from the character polynomial $F(t_1, \dots, t_n) \in R(\bT) \otimes \Q$ of the exponential functor $F$ and show in Prop.~\ref{prop:potential} that its derivatives generate the higher fusion ideal $J_{F,\Q}$. In fact, our potential is a linear combination of the classical potentials for various levels with coefficients derived from structure constants of the exponential functor. 

We see this as an indication that the close relationship to some Grassmannian will persist in our case. This does not seem too far fetched. Operator-algebraic versions of Grassmannians do exist and have been studied in the past (see for example \cite{paper:Salinas}). Witten \cite{paper:Witten} has given a physical explanation of the relationship between the fusion ring and the cohomology of the Grassmannian manifolds. This then raises the question of whether what may persist could be understood physically.

Finally, we would like to point out another interesting feature of our construction: Just as in \cite{paper:FreedHopkinsTelemanIII} a key step in the computation of the graded equivariant higher twisted $K$-theory is the restriction to the maximal torus $\bT \subset G$. In the classical case the pullback of the basic gerbe with respect to the Weyl map 
\[
	w \colon SU(n)/\bT \times \bT \to SU(n) \quad, \quad ([g], z) \mapsto gzg^*
\]
has been considered in \cite{paper:BeckerMurrayStevenson}. Up to stable isomorphism it agrees with a tensor product of cup-product gerbes \cite[Prop.~5.3]{paper:BeckerMurrayStevenson}. The restriction to $\bT$ is sufficient for our computations, so we will not consider the full Weyl map. Nevertheless, we find a similar tensor product decomposition in Lem.~\ref{lem:equiv_mor_eq}, which takes the following form: the UHF-algebra $\MF$ satisfies the following decomposition into a $\Z/2\Z$-graded tensor product   
\begin{equation} \label{eqn:MF-decomp}
	\MF \cong \MFfac{1} \otimes \dots \otimes \MFfac{n}
\end{equation}
with $\MFfac{i} = \Endo{F(\text{span}\{e_i\})}^{\otimes \infty}$ for the standard basis $\{e_1, \dots, e_n\}$ of $\C^n$. Let $\liet \subset \R^n$ be the Lie algebra of the maximal torus $\bT$. Identifying $\MF$ with its decomposition, the pullback Fell bundle $\cE_\bT \to \bT$ is equivariantly Morita equivalent to 
\begin{equation} \label{eqn:cL}
	\cL = \cL_1 \otimes_{\C} \dots \otimes_{\C} \cL_n\ .
\end{equation}
Each $\cL_i$ is a Fell bundle over $\liet^{[2]}$ with the fibre product taken over the exponential map. The fibre of $\cL_i \to \liet^{[2]}$ over $(x_1, x_2) \in \liet^{[2]}$ is given by the $\MFfac{i}$-$\MFfac{i}$-bimodule
\[
	(F(\text{span}\{e_i\}) \otimes \MFfac{i})^{\otimes q_i(x_2 - x_1)}\ ,
\]
where $q_i \colon \liet \to \R$ is the projection from $\liet \subset \R^n$ onto the $i$th coordinate. This is the operator-algebraic counterpart of the cup-product gerbe decomposition in \cite[Prop.~5.3]{paper:BeckerMurrayStevenson}. This decomposition only holds because we switched to functors that are symmetric monoidal. If $F$ does not preserve symmetries, then the order of the factors in the decomposition \eqref{eqn:MF-decomp} matters and will produce a priori different identifications with $\MF$ (see Rem.~\ref{rem:symmetric_F}). 

\vspace{4mm}

The article is structured as follows: In Section~2 we recall the definition of the equivariant higher twists over $SU(n)$ induced by an exponential functor~$F$ from \cite{paper:EvansPennig-Twists}. We highlight the necessary modifications to make this construction work for symmetric monoidal functors that take values in super-vector spaces. We will also explain that all of the known examples fit much more naturally into this setup.   

The main goal of Section~3 is the computation of the spectral sequence~\eqref{eqn:spec_seq} converging to the equivariant higher twisted $K$-groups $K^G_*(C^*\cE)$. It begins with some background on Lie algebras, root systems and the Weyl alcove. The terms in \eqref{eqn:spec_seq} only involve evaluations of the Fell bundle at points in the maximal torus. Hence, we focus on the restriction $\cE_\bT$ of $\cE$ to $\bT \subset G$ in Sec.~\ref{subsec:maxtorus}. We construct a $\bT$-equivariant equivalence between $\cE_\bT$ and $\cL$ (with $\cL$ as in \eqref{eqn:cL}) and prove in Lem.~\ref{lem:equiv_mor_eq} that it induces a Morita equivalence between $C^*\cE_\bT$ and $C^*\cL$. We also study actions of the normaliser $N(\bT)$ of the maximal torus on these Fell bundles in Sec.~\ref{subsec:WeylGroup}, which give rise to actions of the Weyl group $W$ on the corresponding $K$-theory groups as outlined in Lem.~\ref{lem:WTequiv_mor_eq}. Finally we compare the $E_1$-page of the spectral sequence with the cochain complex giving $H^*_{\Waff}(\liet, \cR)$ in Lem.~\ref{lem:cochain_complex}. After rationalisation we have $H^*_{\Waff}(\liet; \cRQ) \cong H^*_{\Lambda}(\liet; \cRQ)^W$ and the right hand side can be computed using the theory of regular sequences and Koszul complexes, which is done in Lem.~\ref{lem:coh_computation}. The generators of the ideal $J_{F,\Q}$ are constructed in Lem.~\ref{lem:generators}. Finally, Thm.~\ref{thm:higher_twists} summarises the main result.

The first part of Section~4 is devoted to the computation of the potential giving the higher fusion ideal. The main result here is Prop.~\ref{prop:potential}, which recovers the classical potential (up to sign) initially found by Gepner for $G = SU(n)$ at level $k$ for $F = \left(\extp^{\rm top}\right)^{\otimes (n + k)}$. In the second part we construct the $U(\MF)$-bundle \eqref{eqn:UMF-Ures0} over $LG$, sketch an approach to understanding the multiplicative structure on equivariant twisted $K$-theory and highlight some connections to CFT.

\section*{Acknowledgments}
We would like to thank the referee for pointing out an error relating to the structure of the normaliser $N_G(\bT)$ in an early version of the paper. The first named author is supported by an Emeritus Fellowship from the Leverhulme Trust.

\section{Equivariant higher twists over $SU(n)$} \label{sec:equiv_higher_twists}
In this section we extend the definition of equivariant higher twists over $G = SU(n)$ from \cite{paper:EvansPennig-Twists} to take values in complex super-Hilbert spaces. Let $(\Viso, \oplus)$ be the symmetric monoidal category of (ungraded) finite-dimen\-sional complex inner product spaces and unitary isomorphisms with the monoidal structure given by the direct sum. This is a topological groupoid, and we will consider the morphism spaces equipped with their natural topology. Likewise, let $(\Vgr, \otimes)$ be the symmetric monoidal category of finite-dimensional complex inner product super-vector spaces and unitary isomorphisms that preserve the grading. The monoidal structure is given by the graded tensor product and the symmetry is defined on homogeneous elements as follows:
\begin{align*}
	\sigma_{V,W} \colon V \otimes W \to W \otimes V \quad, \quad v \otimes w \mapsto (-1)^{\lvert v \rvert \cdot \lvert w \rvert} w \otimes v \ ,	
\end{align*}
where $\lvert v \rvert$ denotes the degree, i.e.\ with $V = V_0 \oplus V_1$ we have $\lvert v \rvert = i$ for $v \in V_i$. Unless otherwise stated, we will always consider tensor products to be graded. We will also consider $\Vgr$ as a topologically enriched category with the natural topology on the morphism spaces. 

\begin{definition} \label{def:exp_func}
An \emph{exponential functor} $F \colon \Viso \to \Vgr$ is a continuous symmetric monoidal functor from $(\Viso, \oplus)$ to $(\Vgr, \otimes)$, which preserves duals (i.e.\ there is a natural isomorphism $F(V)^* \cong F(V^*)$) and such that the $S^1$-representation $F(\C)$ has only positive characters. In particular, $F$ comes equipped with two natural isomorphisms 
\begin{gather*}
	\tau_{V,W} \colon F(V \oplus W) \to F(V) \otimes F(W) \quad \text{and} \quad \iota \colon F(0) \to \C \ , 
\end{gather*}
which make the obvious unitality and associativity diagrams commute. Being symmetric implies that  
\[
	\tau_{W,V} \circ F(\sigma^{\oplus}_{V,W}) = \sigma^{\otimes}_{F(V), F(W)} \circ \tau_{V,W}\ ,
\]
where $\sigma^{\oplus}, \sigma^{\otimes}$ denote the respective symmetry transformations for the direct sum and the tensor product (on super-vector spaces).
\end{definition}

%We will assume that the $S^1$-representation $F(\C)$ has only positive characters. This is not a severe restriction, as it can be achieved by replacing $F$ by $\det^{\otimes l} \otimes F$ (see Ex.~\ref{ex:det}) for a suitable $l \in \N_0$.

\begin{example} \label{ex:det}
	The classical twists arise in this setting from (graded) powers of the determinant functor. Let $l \in \N$ and define $F$ for $V \in \Viso$ to be 
	\[
		\det\,\!\!^{\otimes l}(V) = \left(\extp^{\rm top} V\right)^{\otimes l}\ ,
	\] 
	where we equip the exterior algebra with its natural $\Z/2\Z$-grading. This means that $\det\,\!\!^{\otimes l}(V)$ is purely odd if $\dim(V)\cdot l$ is odd and purely even otherwise. Another example arises from the full exterior algebra. More generally, for a trivially graded real inner product space $X$, we define
	\[
		F^X(V) = \bigoplus_{m \in \N_0} X_{\C}^{\otimes m} \otimes \extp^m V 
	\]
	again equipped with its natural $\Z/2\Z$-grading. The inner product on the $m$th exterior power is 
	\[
		\langle \xi_1 \wedge \dots \wedge \xi_m, \eta_1 \wedge \dots \wedge \eta_m \rangle = \det\left( \langle \xi_i, \eta_j \rangle_{i,j} \right)
	\]
	and the summands are orthogonal. With this definition we obtain a natural isomorphism $F^X(V^*) \cong F^X(V)^*$. We refer the reader to \cite[Sec.~2.2]{paper:Pennig-RMatrices} for the definitions of $\tau_{V,W}$ and $\iota$ and further details.
	
	Note that if $F$ does not satisfy the character condition in Def.~\ref{def:exp_func}, $\det^{\otimes l} \otimes F$ does for a suitable choice of $l \in \N$. 
\end{example}

\subsection{Graded $C^*$-algebras and graded Morita equivalences}
To an exponential functor $F \colon \Viso \to \Vgr$ we will associate a $\Z/2\Z$-graded $C^*$-algebra~$\MF$. A $\Z/2\Z$-grading on a $C^*$-algebra $A$ is an order $2$ automorphism, i.e.\ $\gamma \in \Aut{A}$ such that $\gamma^2 = \id{A}$. It induces a direct sum decomposition $A = A^{(0)} \oplus A^{(1)}$ as a Banach space with 
\[
	A^{(i)} = \{ a \in A \ |\ \gamma(a) = (-1)^{i}\,a\}\ .
\]  
If there is a self-adjoint unitary $s \in M(A)$ such that $\gamma = \Ad_s$, then we call the grading inner. A graded $*$-homomorphism $\varphi \colon A \to B$ is one that intertwines the grading automorphisms $\gamma_A$ and $\gamma_B$. 

Given a graded $C^*$-algebra $A$ a graded Hilbert $A$-module $E$ is a right Hilbert $A$-module equipped with a linear bijection $S_E \colon E \to E$ such that $S_E^2 = \id{E}$ and for all $v,w \in E$, $a \in A$ 
\[
	S_E(v\,a) = S_E(v)\,\gamma_A(a) \quad \text{and} \quad \langle S_E(v), S_E(w) \rangle_A = \gamma_A(\langle v, w \rangle_A) \ .
\]
Defining $E^{(i)} = \{ v \in E \ | \ S_E(v) = (-1)^i\,v \}$ we have $E = E^{(0)} \oplus E^{(1)}$. If $E$ is a graded Hilbert $A$-module, then $\Ad_{S_E}$ defines an order $2$ automorphism of the compact operators $\rcpt{A}{E}$ turning them into a graded $C^*$-algebra with an inner grading. A graded Morita equivalence between graded $C^*$-algebras $A$ and $B$ is a graded right Hilbert $B$-module $E$, which is full in the sense that $\langle E^{(i)}, E^{(j)} \rangle \subseteq B^{(i+j)}$ is dense, together with an isomorphism $\varphi \colon A \to \rcpt{B}{E}$ of graded $C^*$-algebras. Given graded $C^*$-algebras $A$, $B$ and $C$ together with an $A$-$B$-Morita equivalence $E$ and a $B$-$C$-equivalence $F$, there is an internal tensor product $E \otimes_B F$, which is an $A$-$C$-Morita equivalence. This is formed as described in \cite[Sec.~1.2.3]{book:JensenThomsen} with the grading operator given by $S_E \otimes S_F$. 

Given two graded $C^*$-algebras $A$ and $B$ we may equip the algebraic tensor product $A \odot B$ with the graded multiplication given on homogeneous elements $a^{(i)} \in A^{(i)}, b^{(j)} \in B^{(j)}, c^{(k)} \in A^{(k)}, d^{(l)} \in B^{(l)}$ by
\[	
	(a^{(i)} \otimes b^{(j)}) \cdot (c^{(k)} \otimes d^{(l)}) = (-1)^{jk}\,(a^{(i)} \cdot c^{(k)}) \otimes (b^{(j)} \cdot d^{(l)})
\]
and the graded (minimal) tensor product $A \otimes B$ is the (minimal) completion of it \cite[Sec.~14.4]{book:Blackadar}. For a homogeneous element $a$ we will sometimes denote the degree of $a$ by $\lvert a \rvert$.

Let $V$ be a finite-dimensional complex super-vector space with inner product. The endomorphism algebra $\Endo{V}$ is a graded $C^*$-algebra with inner grading. If $W$ is another such space there is a natural isomorphism of graded $C^*$-algebras (described in \cite[Prop.~14.5.1]{book:Blackadar})
\begin{equation} \label{eqn:graded_end}
	\Endo{V} \otimes \Endo{W} \to \Endo{V \otimes W}\ ,
\end{equation}
which is compatible with the monoidal symmetry on both sides. Let $W$ be a super-vector space equipped with a unitary $G$-representation that preserves the grading. Then $G$ acts by degree-preserving automorphisms on $\Endo{W}$. Consider the colimit $\Endo{W}^{\otimes \infty}$ with connecting maps
\begin{equation} \label{eqn:End_colimit}
	\Endo{W}^{\otimes n} \to \Endo{W}^{\otimes (n+1)} \quad , \quad T \mapsto T \otimes 1\ .
\end{equation}
Denote the grading operator on $W$ by $s$ and the ungraded tensor product by $\overline{\otimes}$ (we will not use this non-standard notation in the rest of the paper). By \cite[Prop.~14.5.1]{book:Blackadar} the map
\[
	\Endo{W} \otimes \Endo{W} \to \Endo{W} \overline{\otimes} \Endo{W} \ , \  T_1 \otimes T_2 \mapsto T_1s^{\lvert T_2 \rvert}\,\overline{\otimes}\, T_2
\]
provides a $*$-isomorphism between the graded tensor product and the algebra $\Endo{W} \overline{\otimes} \Endo{W}$ equipped with inner grading given by $\Ad_s \otimes \Ad_s$. Since the identity is even, the sequence \eqref{eqn:End_colimit} is isomorphic to 
\[
	\Endo{W}^{\overline{\otimes} n} \to \Endo{W}^{\overline{\otimes} (n+1)} \quad , \quad T \mapsto T \otimes 1
\]
with the grading on $\Endo{W}^{\overline{\otimes} n}$ given by $\Ad_{s^{\overline{\otimes} n}}$. Hence, we can identify the colimit $\Endo{W}^{\otimes \infty}$ with an ungraded tensor product as well, on which the grading automorphism is approximately inner and acts by conjugation with~$s$ on each tensor factor. The $G$-action obtained as an infinite tensor product of the action on $\Endo{W}$ commutes with this automorphism.

The construction outlined in the following section gives rise to a $\Z/2\Z$-graded $G$-$C^*$-algebra $C^*(\cE)$ for $G = SU(n)$. We will compute its graded $K$-theory in the sense of Kasparov \cite[Def.~2.3]{paper:Kasparov}, i.e.\ we define for a graded $G$-$C^*$-algebra $A$
\[
	K_i^G(A) = KK_G^i(\C, A)\ .
\]
This functor is homotopy-invariant, stable, continuous and has six-term exact sequences for semi-split exact sequences of graded $G$-$C^*$-algebras. Let $W$ be a finite-dimensional super-Hilbert space equipped with a representation by $G$ that preserves the grading. Note that $W^{\otimes n}$ provides a $G$-equivariant graded $\Endo{W}^{\otimes n}$-$\C$-Morita equivalence that gives rise to 
\[
	K_0^G(\Endo{W}^{\otimes n}) \cong K_0^G(\C) \cong R(G)\ .
\]
This isomorphism intertwines the connecting maps in \eqref{eqn:End_colimit} with the multiplication by the virtual representation $W_{\rm gr} := W^{(0)} - W^{(1)} \in R(G)$. In the colimit we therefore obtain the localisation 
\[
	K_0^G(\Endo{W}^{\otimes \infty}) \cong R(G)[W_{\rm gr}^{-1}]\ .
\]

\subsection{Construction of the Fell bundle}
In this section we will recall the construction of the higher twist from \cite{paper:EvansPennig-Twists}. The twist is given as a saturated Fell bundle $\cE$ over a (locally compact Hausdorff) groupoid $Y^{[2]}$. We refer the reader to \cite[Def.~2.6]{paper:BussExel} for the definition of Fell bundles. The fibres of $\cE \to Y^{[2]}$ over the units of the groupoid will be $C^*$-algebras and over general elements Morita equivalences between domain and range algebra. We therefore start by defining those. 

Let $\rho \colon G \to U(n)$ for $G = SU(n)$ be the standard representation (i.e.\ $\rho$ is the homomorphism given by inclusion). It gives rise to the (grading-preserving) unitary $G$-representation 
\[
	F(\rho) \colon G \to U(F(\C^n))
\] 
on the super-vector space $F(\C^n)$. As in \eqref{eqn:End_colimit} the UHF-algebra 
\[
	\MF = \Endo{F(\C^n)}^{\otimes \infty}
\]
is a $\Z/2\Z$-graded $G$-$C^*$-algebra with $G$-action given by $\left(\Ad_{F(\rho)}\right)^{\otimes \infty}$. For any subspace $V \subset \C^n$ the right Hilbert $\MF$-module 
\begin{equation} \label{eqn:subspace_module}
		\cV = F(V) \otimes \MF
\end{equation}
is in fact a graded $\MF$-$\MF$ Morita equivalence bimodule. The left multiplication by $\MF$ is induced by the canonical isomorphism of the compact operators $\rcpt{\MF}{\cV}$ with $\Endo{F(V)} \otimes \MF$ and the composition
\begin{equation} \label{eqn:left_mul}
\begin{tikzcd}[column sep=0.6cm, every matrix/.append style = {font = \footnotesize}]
	\Endo{F(V)} \otimes \MF \ar[r,"\cong"] & \Endo{F(V)} \otimes \Endo{F(V) \otimes F(V^\perp)}^{\otimes \infty} \ar[r,"\cong"] & \MF\ ,
\end{tikzcd}
\end{equation}
where the first map applies $(\Ad_{\tau_{(V,V^\perp)}})^{\otimes \infty}$ to $\MF$ and the second map is given by shifting the tensor factors accordingly. Note that permuting the tensor factors using the symmetry of $\Vgr$ involves signs. Other than this the construction is the same as in \cite[Lem.~3.7]{paper:EvansPennig-Twists}.

The vector space $F(V)$ carries a left action by $\Endo{F(V)}$. This turns into a right action by $\Endo{F(V)}$ on the dual space $F(V)^* \cong F(V^*)$. Therefore the opposite bimodule of~$\cV$ is given by 
\[
	\cV^{\text{op}} = F(V)^* \otimes \MF \cong F(V^*) \otimes \MF\ ,
\]
where the left action of $\MF$ only acts on $\MF$ by left multiplication, but the right action makes use of the isomorphism $\Endo{F(V)} \otimes \MF \cong \MF$. Note that the notation ``$\op$'' will be reserved for the opposite bimodule, not for the opposite grading. 

The Morita equivalence bimodules described above form the fibres of a Fell bundle over a groupoid that we now construct: For an element $g \in G$ denote by $\EV{g}$ the set of eigenvalues of $g$. The eigenspace corresponding to the eigenvalue $\lambda \in \EV{g}$ will be denoted by~$\Eig{g}{\lambda}$. Let 
\begin{equation} \label{eqn:Y}
	Y = \{ (g,z) \in G \times S^1 \setminus \{1\}\ |\ z \notin \EV{g}\}
\end{equation}
and let $Y^{[2]}$ be the fibre product of $Y$ with itself over $G$, i.e.\ a point $(g,z_1,z_2) \in G \times (S^1 \setminus \{1\})^2$ is in $Y^{[2]}$ if and only if $z_i \notin \EV{g}$ for $i \in \{1,2\}$. This is a groupoid with respect to the composition 
\[
	(g,z_1,z_2) \cdot (g,z_2,z_3) = (g,z_1,z_3)\ .
\]
Choose a total order on $S^1 \setminus \{1\}$ by declaring $z_1 < z_2$ if the arc from $z_1$ to~$z_2$ in $S^1 \setminus \{1\}$ runs counterclockwise. The groupoid has a decomposition into three disjoint components
\[
	Y^{[2]} = Y^{[2]}_+\ \amalg\ Y^{[2]}_0\ \amalg\ Y^{[2]}_-\ ,
\]
where $Y^{[2]}_+$ contains all points $(g,z_1,z_2)$ with $z_1 < z_2$ such that there is $\lambda \in \EV{g}$ with $z_1 < \lambda < z_2$. The space $Y^{[2]}_-$ is defined similarly, but with $z_1 > z_2$ and $Y^{[2]}_0$ is the space with no eigenvalues between $z_1$ and $z_2$. In the following we will denote the restrictions of the Fell bundle $\cE$ to these three subspaces by $\cE_+$, $\cE_0$ and $\cE_-$ respectively.

The bundle $\cE_+$ constructed in \cite{paper:EvansPennig-Twists} is defined to be the locally trivial $\Z/2\Z$-graded right Hilbert $\MF$-module bundle with fibre over $(g,z_1,z_2) \in Y^{[2]}_+$ given by
\[
	\cE_{(g,z_1,z_2)} =  F\!\left(E(g,z_1,z_2)\right) \otimes \MF \quad
	\text{ with } E(g,z_1,z_2) = \bigoplus_{z_1 < \lambda < z_2 \atop \lambda \in \EV{g}}\Eig{g}{\lambda}\ . 
\]
By \cite[Cor.~3.8]{paper:EvansPennig-Twists} the endomorphism bundle of $\cE_+$ has a trivialisation that restricts to the left $\MF$-module structure on $\cE_{(g,z_1,z_2)}$ from \eqref{eqn:left_mul} in each fibre. The monoidal natural transformation of $F$ provides an isomorphism 
\[
	F\!\left(E(g,z_1,z_2)\right) \otimes F\!\left(E(g,z_2,z_3)\right) \cong F\!\left(E(g,z_1,z_3)\right) \ .
\]
Over $Y^{[2]}_+$ we therefore obtain a multiplication 
\[
	\cE_{(g,z_1,z_2)} \otimes_{\MF} \cE_{(g,z_2,z_3)} \to \cE_{(g,z_1,z_3)}\ .
\]
Note that the proof of its associativity in \cite[Lem.~3.7]{paper:EvansPennig-Twists} and continuity in \cite[Cor.~3.8]{paper:EvansPennig-Twists} carry over verbatim to the graded case. Define $\cE_0$ to be the trivial bundle with fibre $\MF$ and let 
\[
	\cE_{(g,z_1,z_2)} = \left( \cE_{(g,z_2,z_1)} \right)^{\text{op}} \quad \text{ for } (g,z_1,z_2) \in Y^{[2]}_-\ . 
\]

The spaces $E(g,z_1,z_2)$ form the fibres of a vector bundle $E \to Y^{[2]}_+$ by \cite[Sec.~3]{paper:MurrayStevenson}. Thus, 
\[
	\cE_+ \cong F(E) \otimes \MF\ .
\] 
Hence, the grading is a continuous operation on $\cE_+$ and similarly on $\cE_-$. The bundle $\cE_0$ is trivial, so the grading is constant. 

The proof of \cite[Thm.~3.3]{paper:EvansPennig-Twists} is based on properties of the two inner products on imprimitivity bimodules which also hold in the graded case, so we obtain a Fell bundle $\cE \to Y^{[2]}$ as in \cite[Cor.~3.12]{paper:EvansPennig-Twists} with a grading $S_\cE \colon \cE \to \cE$ turning each fibre $\cE_{(g,z_1,z_2)}$ into a graded $\MF$-$\MF$ Morita equivalence bimodule in such a way that the Fell bundle multiplication 
\[
	\cE_{(g,z_1,z_2)} \otimes_{\MF} \cE_{(g,z_2,z_3)} \to \cE_{(g,z_1,z_3)}
\]
is compatible with the grading.

Let $h \in G$. In the standard representation $\rho(h) \colon \C^n \to \C^n$ restricts to an isomorphism $\Eig{g}{\lambda} \to \Eig{hgh^{-1}}{\lambda}$. Thus, $G$ acts on $Y^{[2]}$ by $h \cdot (g,z_1,z_2) = (hgh^{-1}, z_1,z_2)$. This action lifts to a continuous grading-preserving action of $G$ on $\cE$ as described in \cite[Cor.~3.6]{paper:EvansPennig-Twists}. Altogether we obtain a $\Z/2\Z$-graded $G$-equivariant saturated Fell bundle $\cE \to Y^{[2]}$. 

The $C^*$-algebra associated to $\cE$ is constructed in the same way as in \cite[Sec.~4]{paper:EvansPennig-Twists}, but we will point out where the grading enters: The algebra $A = C_0(Y,\MF)$ is now $\Z/2\Z$-graded. Because the Fell bundle multiplication is compatible with the grading, the right Hilbert $A$-module $L^2(\cE)$ is graded as well, which induces a $\Z/2\Z$-grading on the adjointable bounded $A$-linear operators $\rbdd{A}{L^2(\cE)}$. There is a well-defined graded $*$-homomorphism 
\[
	C_c(Y^{[2]},\cE) \to \rbdd{A}{L^2(\cE)}\ ,
\]
where the domain acts as convolution operators on $L^2(\cE)$. We define $C^*\cE$ as the norm closure of the domain in $\rbdd{A}{L^2(\cE)}$. This is a Fell bundle $C^*$-algebra with the grading as extra structure. Therefore \cite[Lem.~4.2]{paper:EvansPennig-Twists} is still valid and shows that $C^*\cE$ is a continuous $C(G)$-algebra with the graded $C^*$-algebras $\cE_g$ as its fibres and \cite[Lem.~4.6]{paper:EvansPennig-Twists} provides a graded Morita equivalence between $\cE_g$ and $\MF$. Moreover, \cite[Lem.~4.3]{paper:EvansPennig-Twists} produces graded $G$-equivariant Morita equivalences. 

We define the equivariant higher twisted $K$-theory of $G=SU(n)$ with twist given by the Fell bundle $\cE$ by
\[
	K^i_{G,\cE}(G) := K_i^G(C^*\cE)\ .
\]
The Fell condition \cite[Def.~4.5]{paper:EvansPennig-Twists} implies that $C^*\cE \otimes \bK$ is isomorphic as a $C(G)$-algebra to the section algebra $C(G,\cA)$ of a locally trivial bundle $\cA \to G$. If we transfer the $G$-action and grading from $C^*\cE \otimes \bK$ to $C(G,\cA)$ through this isomorphism, then compatibility with the $C(G)$-algebra structure implies that the grading acts fibrewise on $\cA$ and the $G$-action covers the conjugation action. Thus, to compute $K_{G,\cE}^i(G)$ we may use the Mayer-Vietoris spectral sequence constructed in \cite[Prop.~4.9]{paper:EvansPennig-Twists}.

This spectral sequence computes these $K$-groups from the representation rings $R(H)$ for certain subgroups $H \subseteq G$. More precisely, we need localisations of these rings defined as follows: If $V$ is a finite-dimensional unitary representation of $G$, then $F(V)$ is again a finite-dimensional unitary representation. Since $F$ is exponential it gives rise to a monoid homomorphism 
\[
	F \colon (R(H),\oplus) \to (R(H), \otimes)
\]
for any subgroup $H \subseteq G$, which we continue to denote by $F$ by slight abuse of notation. For a subgroup $H \subseteq G$ we define
\begin{align*}
	R_F(H) &= R(H)[F(\left.\rho\right|_H)^{-1}] \ .
\end{align*}
where $\rho \colon G \to U(n)$ denotes the standard representation.

Finally, we also need to see that $K_*^G(C^*\cE)$ is a module over $K_0^G(\MF)$. It suffices to see that the tensor embedding $C^*\cE \to C^*\cE \otimes \MF$ induces an isomorphism on $K_*^G$. As in \cite[Prop.~4.11]{paper:EvansPennig-Twists} this problem can be reduced to checking that $C(X, \MF) \to C(X, \MF) \otimes \MF$ with $f \mapsto f \otimes 1$ induces an isomorphism for a compact Hausdorff $G$-space $X$. By treating grading and $G$-action together as a $G \times \Z/2\Z$-action this follows in the same way as in \cite[Lem.~4.10]{paper:EvansPennig-Twists}, but it can also be shown directly as follows: It suffices to see that the first factor embedding 
\[
	l \colon \MF \to \MF \otimes \MF \quad , \quad a \mapsto a \otimes 1
\] 
is asymptotically $G$-unitarily equivalent to an isomorphism through grading-preserving unitaries. This can be achieved as in  \cite[Lem.~2.3]{paper:EvansPennig-circle}. There is an equivariant isomorphism $\varphi \colon \MF \otimes \MF \to \MF$ that alternates between the two tensor factors and preserves the grading. Hence, it suffices to show that there is a path $u \colon [0,1) \to U(\MF)$ such that for all $a \in \MF$ 
\[
	\lim_{t \to 1} \lVert u_t\,(\varphi \circ l)(a)\,u_t^* - a \lVert = 0 \ .
\]
Note that the subgroup $U(F(\C^n) \otimes F(\C^n))^{G \times \Z/2\Z}$ of unitaries fixed by the grading and the $G$-action decomposes into a product of unitary groups, which is path-connected. Therefore there is a path 
\[
	v \colon [0,1] \to U(F(\C^n) \otimes F(\C^n))\ ,
\]
which is $G$-invariant, preserves the grading and connects the identity map to the one interchanging the two tensor factors. Now we proceed as in the proof of \cite[Lem.~2.3]{paper:EvansPennig-circle} with the construction of $u_t$. The diagram on \cite[p.~922]{paper:EvansPennig-Twists} shows that the induced multiplication on $K_0^G(\MF)$ corresponds to the ring structure of $R_F(G)$.

\subsubsection{Classical twists} \label{subsec:classical_twists}
	Following \cite[Theorem~1]{paper:FreedHopkinsTelemanIII} classical twists for $SU(n)$ at level $k \in \N_0$ are given by $(h^\vee + k)$-fold tensor powers of the basic gerbe, where $h^\vee = n$ is the dual Coxeter number of $SU(n)$. In our setup this situation corresponds to the exponential functor \[
			F = \left(\extp^{\rm top}\right)^{\otimes (n+k)}\ ,
		\]
		where our construction boils down to the one from \cite{paper:MurrayStevenson}. By \cite[Lem.~4.2]{paper:EvansPennig-Twists} the $C^*$-algebra $C^*(\cE)$ is a continuous $C(G)$-algebra with fibre $C^*(\cE_g)$, where $\cE_g$ is the restriction of $\cE$ to the subgroupoid of $Y^{[2]}$ over $g$, i.e.\ 
		\[
			Y_g^{[2]} \cong \left\{ (z_1, z_2) \in (S^1 \setminus \{1\})^2 \ | \  z_i \notin \EV{g} \text{ for } i \in \{1,2\} \right\}\ .
		\]
		For $z_1 \leq z_2$ let $E(z_1,z_2)$ be the direct sum of the eigenspaces $\Eig{g}{\lambda}$ with $z_1 < \lambda < z_2$ (which is the zero vector space if there are no eigenvalues in-between). The fibre $(\cE_g)_{(z_1, z_2)}$ is 
		\[
			(\cE_g)_{(z_1, z_2)} = \begin{cases}
				\left(\extp^{\rm top} E(z_1,z_2)\right)^{\otimes (n+k)} & \text{if } z_1 \leq z_2 \ ,\\
				\left(\left(\extp^{\rm top} E(z_2,z_1)\right)^{\otimes (n+k)}\right)^* & \text{if } z_1 > z_2\ .
			\end{cases}
		\]
		Let $X_g = S^1 \setminus (\{1\} \cup \EV{g})$, $z_0 \in X_g$ and let $\sigma \colon X_g \to Y_g^{[2]}$ be defined by $\sigma(z) = (z,z_0)$. Then $L^2(\cE)$ is a continuous field of Hilbert spaces over $Y$ with fibre over $(g,z_0)$ isomorphic to $H = L^2(X_g, \sigma^*\cE_g)$. Let
		\begin{gather*}
			\kappa_g \colon Y_g^{[2]} \to \Z/2\Z \ ,\\
			(z_1, z_2) \mapsto
			\begin{cases}
				(n+k)\,\dim(E(z_1,z_2)) \mod 2 & \text{if } z_1 \leq z_2\ , \\
				(n+k)\,\dim(E(z_2,z_1)) \mod 2 & \text{else} \ .
			\end{cases}
		\end{gather*}
		This is a continuous groupoid homomorphism. The Hilbert space $H$ has a $\Z/2\Z$-grading, where $H_+$ is the closure of compactly supported functions with support in $(\kappa_g \circ \sigma)^{-1}(0)$. Moreover, $C^*(\cE_g)$ is the closure of the convolution algebra given by operators with compactly supported sections of $\cE_g$ as their integral kernels, i.e.\ the algebra of compact operators on $H$. A compact operator that corresponds to a compactly supported section of $\cE_g$ is even with respect to the $\Z/2\Z$-grading on $C^*(\cE)$ if the section is supported in $\kappa_g^{-1}(0)$ and odd if the support is in $\kappa_g^{-1}(1)$. Hence, the even operators are the ones preserving the decomposition $H_+ \oplus H_-$, whereas the odd ones map $H_+$ to $H_-$ and vice versa. If $n+k$ is even, $H_- = 0$ and the grading is trivial.
		
		The map $\kappa_g$ is the restriction of a continuous map $\kappa \colon Y^{[2]} \to \Z/2\Z$ to the fibre over $g$, which is defined in the same way as above. Consider the quotient of $Y \times \Z/2\Z$ by the equivalence relation
		\[
			(y_1, m) \sim (y_2, n) \text{ if and only if } \pi(y_1) = \pi(y_2) \text{ and } n = \kappa(y_1,y_2) + m 
		\] 
		This gives a principal $\Z/2\Z$-bundle over $G$ that represents a class $[\kappa] \in H^1(G,\Z/2\Z)$ in accordance with the fact that the classical twists are up to isomorphism classified by $H^1(G,\Z/2\Z) \times H^3(G,\Z)$. Since we are considering $G = SU(n)$, we have $H^1(G,\Z/2\Z) = 0$ and $[\kappa]$ vanishes in cohomology.

\section{The spectral sequence computing higher twisted $K$-theory}
We start this section by recalling a few basic facts about the geometry underlying the conjugacy classes of $G = SU(n)$. Let $\ell = n-1$ be the rank of~$G$. Denote by $\bT \subset SU(n)$ the maximal torus consisting of diagonal matrices, let $\liet$ be its Lie algebra. Note that 
\[
	\liet = \{ (\xi_1, \dots, \xi_n) \in \R^n \ | \ \ \xi_1 + \dots + \xi_n = 0 \} \subset \R^n \ .
\]
Let $\Lambda = \ker(\exp \colon \liet \to \bT) \subset \liet$ be the integral weight lattice, and let $\Lambda^*$ be its dual lattice\footnote{We absorb the factor $2\pi i$ into the definition of $\exp$.}. We have 
\[
	\Lambda = \{ (\lambda_1, \dots, \lambda_n) \in \Z^n\ | \ \lambda_1 + \dots + \lambda_n = 0\} \subset \Z^n \ .
\]
Denote by $\rscal{\,\cdot\,}{\,\cdot\,}{\lieg}$ the basic inner product on the Lie algebra $\lieg$ of $G$. Choose a collection of simple roots $\alpha_1, \dots, \alpha_\ell \in \liet^*$ and define
\[
	\liet_+ = \{ \xi \in \liet \ |\ \rscal{\alpha_j}{\xi}{\lieg} \geq 0 \ \forall j \in \{1, \dots, \ell\} \}\ .
\]
This is the corresponding positive Weyl chamber. Let $\alpha_0 \in \Lambda^*$ be the lowest root. The intersection of $\liet_+$ with the half-plane defined by $\rscal{\alpha_0}{\xi}{\lieg} \geq -1$ is the fundamental alcove of $G$. For $SU(n)$ we can take $\alpha_i(\xi) = \xi_i - \xi_{i+1}$ for $i \in \{1, \dots, \ell\}$ as the simple roots. The lowest root is given by $\alpha_0(\xi) = \xi_n - \xi_1$. The vertices of the Weyl alcove are then given by the origin $\mu_0$ and the points 
\[
	\mu_k = \left(\underbrace{\frac{k}{n}, \dots, \frac{k}{n}}_{n-k \text{ times}},\underbrace{\frac{k-n}{n}, \dots,  \frac{k-n}{n}}_{k \text{ times}} \right)
\]
for $k \in \{1, \dots, \ell\}$. Note that the vertex $\mu_k \in \liet$ lifts the central element $\omega^k 1_n \in Z(SU(n))$, where $\omega = e^{2\pi i / n}$. The simplex obtained as the convex hull of the set $\{\mu_0, \dots, \mu_\ell\} \subset \liet$ parametrises the conjugacy classes in~$G$. We can identify it with the standard $\ell$-simplex 
\[
	\Delta^\ell = \left\{ (t_1, \dots, t_\ell) \in \R^{\ell}\ \mid\ \sum_{i=1}^\ell t_i \leq 1 \text{ and } t_j \geq 0 \ \forall j \in \{1,\dots, \ell\} \right\}
\]
by mapping the point $(t_1, \dots, t_{\ell})$ to $\sum_{i=1}^\ell t_i \mu_i$. In this way $(0,\dots,0)$ corresponds to $\mu_0$. Let $W = N(\bT)/\bT \cong S_n$ be the Weyl group of $G$. 

For a non-empty subset $I \subset \{0,\dots, \ell\}$ let $\Delta_I \subset \Delta^\ell$ be the closed subsimplex spanned by the vertices in $I$. Denote by $\xi_I \in \Delta^\ell \subset \lieg$ the barycentre\footnote{Taking any other point in the interior of the simplex $\Delta_I$ will not change $G_I$ up to isomorphism.} of $\Delta_I$ and let $G_I$ be the centraliser of $\exp(\xi_I) \in G$. Let $W_I \subset W$ be the stabiliser of $\exp(\xi_I)$. Note that $W_{\{0\}} = \dots = W_{\{\ell\}} = W$. Let $I = \{i_0, \dots, i_r\}$ with $0 \leq i_0 < i_1 < \dots < i_r \leq \ell$. If $\lvert I \rvert = 1$, then $G_I = G_{\{i_0\}} \cong SU(n)$. Otherwise,  
\begin{equation} \label{eqn:GI-iso}
	G_I \cong U(i_r - i_{r-1}) \times \dots \times U(i_1 - i_0) \times U(n - (i_r - i_0)) \cap SU(n) \ .
\end{equation}
The groups $G_I$ for $G=SU(3)$ and all $I \subset \{0,1,2\}$ are shown in Fig.~\ref{fig:barycentres}.

\begin{figure}[htp]
\centering
\begin{tikzpicture}[scale=3.8]
	\coordinate (0) at (0,0);
	\coordinate (alpha2) at (0,{sqrt(2)});
	\coordinate (alpha1) at ({sqrt(2)*sin(120)}, {sqrt(2)*cos(120)});
	\coordinate (alpha3) at ($(alpha1)+(alpha2)$);
	
	\coordinate (mu0) at ($1/3*(alpha1) + 2/3*(alpha2)$);
	\coordinate (mu1) at ($2/3*(alpha1) + 1/3*(alpha2)$);

	\coordinate (b0) at ($1/2*(mu0)$);
	\coordinate (b1) at ($1/2*(mu1)$);
	\coordinate (b2) at ($(b0) + (b1)$);

	\coordinate (c) at ($1/3*(mu0) + 1/3*(mu1)$);
	
	\draw[fill=blue!10] (0,0) -- ($(mu0)$) -- ($(mu1)$) -- cycle;
	
	\draw ($(mu0)$) -- ($(mu1)$);
	\draw (0,0) -- ($(mu0)$);
	\draw (0,0) -- ($(mu1)$);
	
	\draw[fill=black, black, thick] (0,0) circle (0.3pt) node[below,black] {$SU(3)$};
	\draw[fill=black, black, thick] ($(mu0)$) circle (0.3pt) node[above,black] {$SU(3)$};
	\draw[fill=black, black, thick] ($(mu1)$) circle (0.3pt) node[below,black] {$SU(3)$};

	\draw[fill=black, black, thick] ($(b0)$) circle (0.3pt) node[left,black] {$U(2)$};
	\draw[fill=black, black, thick] ($(b1)$) circle (0.3pt) node[below,black] {$U(2)$};
	\draw[fill=black, black, thick] ($(b2)$) circle (0.3pt) node[right,black] {$U(2) \cong U(2) \times U(1) \cap SU(3)$};

	\draw[fill=black, black, thick] ($(c)$) circle (0.3pt) node[above,black] {$\bT$};
\end{tikzpicture}
\caption{\label{fig:barycentres} The group $G_I$ is associated to the barycentre of~$\Delta_I$. The picture shows these groups for $SU(3)$ for all $I$.}
\end{figure}

The groups $\Lambda \cong \pi_1(\bT, 1)$ and $W$ both act on $\liet$ by translation and permuting the coordinates, respectively, which gives rise to an action of the semidirect product $\Waff = \Lambda \rtimes W$, which fits into a split short exact sequence
\begin{equation} \label{eqn:Weyl_group_ext}
	\begin{tikzcd}
		1 \ar[r] & \Lambda \ar[r] & \Waff \ar[r,"q_W",shift left=0.8mm] & W \ar[r] \ar[l,"i_W",shift left=0.8mm] & 1 \ .
	\end{tikzcd}
\end{equation}
Let $\widetilde{W}_I \subset \Waff$ be the stabiliser of $\xi_I \in \liet$. Note that $q_W$ restricts to an isomorphism $q_{W_I} \colon \widetilde{W}_I \to W_I$. Its inverse $\varphi_I \colon W_I \to \widetilde{W}_I$ is defined by
\begin{equation} \label{eqn:inv_of_quot_map}
	\varphi_I(\sigma) = \left(\xi_I - \sigma \cdot \xi_I,\ \sigma\right)\ .
\end{equation}
The map $c_I \colon W_I \to \Lambda$ with $c_I(\sigma) = \xi_I - \sigma \cdot \xi_I$ is the cocycle (which is also a coboundary) for the pullback of the extension \eqref{eqn:Weyl_group_ext} to $W_I$.

For the spectral sequence computing $K_*^G(C^*\cE)$ we need an equivariant closed cover of $G$. For $0 < \delta_\ell < 1$ define 
\begin{align}
	\label{eqn:equiv_cover}
	A_i &= \left\{ (t_1, \dots, t_\ell) \in \Delta^\ell\  | \ t_i \geq 1 - \delta_\ell \right\} \quad \text{for } 1 \leq i \leq \ell\ ,\\
	A_0 &= \left\{ (t_1, \dots, t_\ell) \in \Delta^\ell \ | \ \sum_{j=1}^n t_j \leq \delta_\ell \right\} \notag\ .
\end{align}
In \cite{paper:EvansPennig-Twists} a different parametrisation of $\Delta^\ell$ was used. Apart from this, these are the same sets as in \cite{paper:EvansPennig-Twists}. We can choose $\delta_\ell$ in such a way that $\bigcup_{j=0}^\ell A_j = \Delta^\ell$ (any $\delta_\ell$ with the property $\delta_\ell > 1/(1 + \tfrac{1}{\sqrt{\ell}})$ will work). Let $q \colon G \to \Delta^{\ell}$ be the continuous map sending an element to the point in $\Delta^\ell$ corresponding to its conjugacy class. For each non-empty subset $I \subset \{0, \dots, \ell\}$ let
\begin{equation} \label{eqn:AI_VI}
	A_I = \bigcap_{i \in I} A_i \qquad \text{and} \qquad \hat{A}_I = q^{-1}(A_I) \subseteq G\ .
\end{equation}
Note that $\xi_I \in A_I$, which gives rise to an embedding $\iota_I \colon G/G_I \to \hat{A}_I$. By \cite[Lem.~4.8]{paper:EvansPennig-Twists} this is a $G$-equivariant deformation retract, so in particular a $G$-equivariant homotopy equivalence. 

We now have to consider restrictions of the Fell bundle $\cE$ to closed subsets of $A \subseteq \bT$. Let $Y_A$ be the restriction of $Y \to G$ to $A \subseteq \bT \subset G$, i.e.
\[
	Y_A = \{ (w, z) \in A \times S^1\setminus \{1\}\ |\ z \neq w_i \text{ for all } i \in \{1,...,n\} \}\ ,
\]
where we identify $\bT$ with the subset of $(S^1)^n$ in which the coordinates $w_i$ multiply to $1$. Denote by $Y^{[2]}_A$ the subgroupoid given by the restriction of $Y^{[2]}$, to $A$, i.e.\ 
\begin{align*}
	Y^{[2]}_A &= \{ (w,z_1,z_2) \in Y^{[2]} \ |\ w \in A\} \ .
\end{align*}
Analogously, denote by $\cE_A \to Y_A^{[2]}$ the restriction of $\cE$ to the subgroupoid $Y_{A}^{[2]} \subset Y^{[2]}$. If $A = \{z\}$, then we will write $\cE_z$ instead of $\cE_{\{z\}}$.

Let $w_I = \exp(\xi_I) \in \bT$. A variation of the Mayer-Vietoris spectral sequence for the closed cover $(\hat{A}_i)_{i \in I}$ has the $E_1$-term 
\begin{equation} \label{eqn:spec_seq}
	E_1^{p,q} = \bigoplus_{\lvert I \rvert = p+1} K^{G_I}_q(C^*\cE_{w_I}) \cong 
	\begin{cases}
		\bigoplus_{\lvert I \rvert = p+1} R_F(G_I) & \text{for } q \text{ even}\ , \\
		0 & \text{for } q \text{ odd}
	\end{cases}
\end{equation}
and converges to $K_*^G(C^*\cE)$ by \cite[Prop.~4.9]{paper:EvansPennig-Twists}.

We will identify $R(\bT)$ with a quotient of a polynomial ring using the ring isomorphism that maps $t_i$ to the $i$th projection $\bT \to U(1)$:
\[
	R(\bT) \cong \Z[t_1, \dots, t_n]/(t_1 \cdots t_n -1) \ .
\]
With respect to this isomorphism the restriction of the standard representation $\left.\rho\right|_{\bT}$ corresponds to $t_1 + \dots + t_n$. Therefore
\begin{align*}
	R_F(\bT) &= R(\bT)[F(t_1 + \dots + t_n)^{-1}] \\
	&= R(\bT)[F(t_1)^{-1}, \dots, F(t_n)^{-1}]\ ,
\end{align*}
where the last equality follows from the exponential property which implies that $F(t_1 + \dots + t_n) = F(t_1) \cdots F(t_n)$.

For the convenience of the reader we summarise the definitions that were made above in Table~\ref{tab:symbols}.
\begin{table}[h]
	\renewcommand{\arraystretch}{1.3}
	\begin{tabular}{|c|l|}
		\hline
		Symbol & Description \\
		\hline
		$\bT$ & maximal torus in $G = SU(n)$ given by diag.\ matrices \\
		$\lieg, \liet$ & Lie algebras of $G$ and $\bT$, respectively \\
		$\Lambda$ & integral weight lattice, kernel of $\exp \colon \liet \to \bT$ \\
		$W$ & $\cong S_n$, Weyl group of $G = SU(n)$ \\
		$\Waff$ & $= \Lambda \rtimes W$, affine Weyl group of $G$  \\
		\hline
		$I$ & subset of the vertices of the fundamental alcove \\
		$\Delta_I$ & subsimplex of the fundamental alcove spanned by $I$ \\
		$\xi_I$ & barycentre of $\Delta_I \subset \liet$ \\
		$G_I$ & centraliser of $w_I = \exp(\xi_I)$ in $G$ \\
		$W_I$ & stabiliser of $w_I = \exp(\xi_I)$ in $W$ \\
		\hline
		$Y$ &  $= \{ (g,z) \in G \times S^1 \setminus \{1\}\ |\ z \notin \EV{g}\}$, see \eqref{eqn:Y} \\
		$Y^{[2]}$ & groupoid given by the fibre square of $Y$ over $G$ \\
		$\MF$ & $=\Endo{F(\C^n)}^{\otimes \infty}$, infinite UHF-$G$-$C^*$-algebra\\
		$\cE$ & Fell bundle over $Y^{[2]}$ depending on exp.\ functor $F$ \\
		$Y^{[2]}_A$ & subgroupoid of $Y^{[2]}$ obtained by restriction to $A \subseteq G$ \\
		$\cE_A$ & restriction of $\cE$ to $Y^{[2]}_A$ for $A \subseteq \bT \subset G$ \\
		$Y^{[2]}_{z}, \cE_{z}$ & restrictions of $Y^{[2]}$ and $\cE$, respectively, to $\{z\} \subset \bT$ \\
		\hline
	\end{tabular}
	\renewcommand{\arraystretch}{1}
	\vspace{1mm}
	\caption{\label{tab:symbols} Notation used throughout the paper}
\end{table}

\subsection{Restriction to the maximal torus} \label{subsec:maxtorus}
In this section we will compare the spectral sequence in \eqref{eqn:spec_seq} to another one that computes the $\Waff$-equivariant Bredon cohomology $H_{\Waff}^*(\liet; \cR)$ of $\liet$ with respect to a local coefficient system~$\cR$. As we will see, this comparison is based on the fact that the Fell bundle $\cE$ has a tensor product decomposition when restricted to the maximal torus (see \eqref{eqn:ET-iso} for the precise form of this decomposition). We will start by decomposing the algebra $\MF = \Endo{F(\C^n)}^{\otimes \infty}$. Let $\{e_1, \dots, e_n\}$ be the standard basis of $\C^n$. For $i \in \{1,\dots, n\}$ define
\begin{equation} \label{eqn:MFi}
	\MFfac{i} = \Endo{F(\text{span}\{e_i\})}^{\otimes \infty}
\end{equation}
and let 
\begin{equation} \label{eqn:Vi}
	\cV_i = F(\text{span}\{e_i\}) \otimes \MFfac{i} \ .
\end{equation}
This is an $\MFfac{i}$-$\MFfac{i}$ Morita equivalence bimodule with left and right multiplication analogous to \eqref{eqn:subspace_module}. Since $F$ is a functor, we obtain a unitary $S^1$-representation given by $z\mapsto F(z) \in U(F(\text{span}(e_i)))$. Likewise, each algebra $\MFfac{i}$ carries an $S^1$-action constructed as the infinite tensor product of $z \mapsto \Ad_{F(z)}$. This action extends to the bimodule $\cV_i$ via $z \mapsto F(z) \otimes \Ad_{F(z)}^{\otimes \infty}$ and turns it into a $S^1$-equivariant $\MFfac{i}$-$\MFfac{i}$ Morita equivalence.

The graded tensor product of all endomorphism algebras evaluates to
\[
	\bigotimes_{i=1}^n\Endo{F(\text{span}\{e_i\})} \cong \Endo{F\left(\bigoplus_{i=1}^n \text{span}\{e_i\} \right)} = \Endo{F(\C^n)}
\]
and the tensor product of these isomorphisms gives a $\bT$-equivariant $*$-iso\-mor\-phism of the UHF-algebras 
\begin{equation} \label{eqn:factor_iso}
	\theta \colon \bigotimes_{i=1}^n\MFfac{i} \to \MF\ .
\end{equation}

\begin{remark} \label{rem:symmetric_F}
	This is one of the subtle points where we reap the benefits of symmetric monoidal functors: If $F$ does not preserve the symmetry, then there are several natural maps from the tensor product in the domain to~$\MF$. For example for $n = 2$, the isomorphism
	\[
		\MFfac{1} \otimes \MFfac{2} \to \MFfac{2} \otimes \MFfac{1} \to \Endo{F(\text{span}\{e_2\} \oplus \text{span}\{e_1\})}^{\otimes \infty} \to \MF\ ,
	\]
	where we first interchange the tensor factors, is potentially different from 
	\[
		\MFfac{1} \otimes \MFfac{2} \to \Endo{F(\text{span}\{e_1\} \oplus \text{span}\{e_2\})}^{\otimes \infty} \to \MF\ .
	\]
	If $F$ preserves symmetries, then they agree. As we have seen in Sec.~\ref{sec:equiv_higher_twists} our main examples of exponential functors do in fact preserve symmetries, when considered as functors to super-vector spaces.
\end{remark}

For any $m \in \Z$ and $\cV_i$, $\MFfac{i}$ as in \eqref{eqn:Vi}, \eqref{eqn:MFi}, respectively, we define
\[
	\cV_i^{\otimes m} = 
	\begin{cases}
		\cV_i^{\otimes m} & \text{for } m > 0\ , \\
		\MFfac{i} & \text{for } m = 0 \ , \\
		(\cV_i^{\text{op}})^{\otimes (-m)} & \text{for } m < 0\ ,
	\end{cases}
\]
where all tensor products are taken over $\MFfac{i}$. Note that for all $r,s \in \Z$ we have $S^1$-equivariant bimodule isomorphism 
\begin{equation} \label{eqn:multiplication}
	\cV_i^{\otimes r} \otimes_{\MFfac{i}} \cV_i^{\otimes s} \to \cV_i^{\otimes (r+s)}
\end{equation}
that are associative in the obvious sense. For each $m_1, \dots, m_n \in \Z$ we can turn the bimodule 
\[
	\cV_1^{\otimes m_1} \otimes_{\C} \dots \otimes_{\C} \cV_n^{\otimes m_n}
\]
(note that the tensor products are graded outer tensor products over $\C$ as in \cite[14.4.4]{book:Blackadar}) into an $\MF$-$\MF$-Morita equivalence using~$\theta$ as in \eqref{eqn:factor_iso}. Combining all $S^1$-actions on the $\cV_i$'s we obtain an action of $\bT \subset (S^1)^n$ on this bimodule. For $I \subset \{1,\dots, n\}$ let 
\[
	V_I = \text{span}\{e_i\ |\ i \in I\} \subset \C^n\ .
\] 
Let $m \colon \{1,\dots, n\} \to \{0,1\}$ be the indicator function of $I$. By our observations above we obtain a $\bT$-equivariant bimodule isomorphism
\begin{equation} \label{eqn:bimod_iso}
	\bigotimes_{i = 1}^n \cV_i^{\otimes m(i)} \cong F(V_I) \otimes \MF\ ,
\end{equation}
where the $n$-fold outer tensor product on the left is a graded tensor product over $\C$ and the left hand side is an $\MF$-$\MF$-bimodule via $\theta$.

For $z \in S^1 \subset \C$ denote by $\log_z \colon S^1 \to i\R$ the logarithm with $\log_z(1) = 0$ and cut through $z$ (for example $\log_{-1}$ takes values in $(-\pi i , \pi i) \subset i\R$). Now consider $l_z \colon S^1 \to \R$ with $l_z(w) = \frac{1}{2 \pi i} \log_z(w)$. Let $w,z,x \in S^1$ with $x \neq z$ and $x \neq w$ and note that by \cite[Lem.~5.12]{paper:BeckerMurrayStevenson}
\[
	l_z(x) - l_w(x) = \begin{cases}
		-1 & \text{if } z < x < w\ ,\\
		1 & \text{if } w < x < z\ ,\\
		0 & \text{else}\ .
	\end{cases}
\]
The fibres of $\cE_{A}$ for $A \subseteq \bT$ can be written as follows
\begin{equation} \label{eqn:ET-iso}
	(\cE_{A})_{(w,z_1,z_2)} \cong \bigotimes_{i=1}^n \cV_i^{\otimes (l_{z_2}(w_i) - l_{z_1}(w_i))}\ ,
\end{equation}
where the tensor products on the right hand side are graded tensor products over $\C$. Indeed, for $z_1 < z_2$ the isomorphism \eqref{eqn:bimod_iso} gives in this case
\begin{align*}
	\bigotimes_{i=1}^n \cV_i^{\otimes (l_{z_2}(w_i) - l_{z_1}(w_i))} \cong\ F\!\left( \bigoplus_{z_1 < \lambda < z_2 \atop \lambda \in \EV{w}} \Eig{w}{\lambda} \right) \otimes \MF = (\cE_A)_{(w,z_1,z_2)}\ .
\end{align*}
Since \eqref{eqn:bimod_iso} is $\bT$-equivariant, the above isomorphism is as well. 

Similar to \cite[Sec.~4.1]{paper:BeckerMurrayStevenson} we can now compare $\cE_A$ to another Fell bundle defined as follows: Consider the exponential map $\exp \colon \liet \to \bT$ and let $\liet^{[2]}$ be the fibre product over $\bT$, i.e.\ 
\[
	\liet^{[2]} = \{ (x_1, x_2) \in \liet^2 \ |\ x_1 - x_2 \in \Lambda \}\ .
\]

Let $q_i \colon \Lambda \to \Z$ for $i \in \{1, \dots, n\}$ be the projection map onto the $i$th coordinate of $\Lambda \subset \Z^n$. The connected components of $\liet^{[2]}$ are labelled by $\Lambda$, since $\liet^{[2]} \cong \liet \times \Lambda$. For $\lambda \in \Lambda$ denote the component of $\liet^{[2]}$ by $\liet^{[2]}_{\lambda}$, i.e.\footnote{There is a slight clash of notation here with $\liet^{[2]}_A$ later, but it is fairly clear from the context which is meant.} 
\[
	\liet^{[2]}_{\lambda} = \{ (x_1,x_2) \in \liet^{[2]} \ |\ x_2 - x_1 = \lambda\}\ .
\]
Now consider the following bundle
\begin{equation} \label{eqn:Li}
	\cL_i = \coprod_{\lambda \in \Lambda} \liet^{[2]}_{\lambda} \times \cV_i^{\otimes q_i(\lambda)}
\end{equation}
over $\liet^{[2]}$, where the tensor product on the right hand side is taken over $\MFfac{i}$. The canonical bimodule isomorphisms 
\[
	\cV_i^{\otimes q_i(\lambda)} \otimes_{\MFfac{i}} \cV_i^{\otimes q_i(\mu)} \to \cV_i^{\otimes q_i(\lambda + \mu)}
\] 
turn each $\cL_i$ into a Fell bundle over the groupoid $\liet^{[2]}$. This Fell bundle comes equipped with a fibrewise $S^1$-action induced by the one on $\cV_i$. Let 
\begin{equation} \label{eqn:Ltensor}
	\cL = \cL_1 \otimes_\C \dots \otimes_\C \cL_n
\end{equation}
be the fibrewise outer tensor product of the $\cL_i$'s over $\liet^{[2]}$. It is straightforward to see that this gives a Fell bundle over $\liet^{[2]}$, where the multiplication reshuffles the tensor factors and uses the multiplication in each of the $\cL_i$'s. Combining the $S^1$-actions on the tensor factors and restricting to $\bT \subset (S^1)^n$ we obtain a $\bT$-equivariant Fell bundle over $\liet^{[2]}$.

For a closed subset $A \subseteq \bT$ let $\liet_A = \exp^{-1}(A) \subseteq \liet$ and let $\liet_A^{[2]}$ be the fibre product of $\liet_A$ with itself over $A$. Let $\cL_A \to \liet_A^{[2]}$ be the corresponding restriction of the Fell bundle $\cL$.  

From the identification of the fibres of $\cE_A$ in \eqref{eqn:ET-iso} we see that we have a bimodule between $\cE_A$ and $\cL_A$ constructed as follows: Let $P_A = \liet_A \times_{A} Y_A$ be given by 
	\[
		P_A = \{ (x, w, z) \in \liet \times A \times S^1 \!\setminus\! \{1\}\ | \exp(x) = w, z \neq w_i \text{ for all } i \in \{1, \dots, n\}\}.
	\]
	This space carries a canonical left action of the groupoid $\liet^{[2]}_A$ and a canonical right action of $Y^{[2]}_A$, which turns $P_A$ into a $\liet^{[2]}_A$-$Y^{[2]}_A$-Morita equivalence. Given $(x,w,z) \in P_A$ the condition $\exp(x) = w$ implies that $l_z(w_i) - q_i(x) \in \Z$, where $q_i \colon \liet \to \R$ is the $i$th projection map and $l_z$ denotes (up to a factor) the logarithm with cut at $z$ as above.  Thus, we can consider the (locally trivial) Banach bundle $\cF_A \to P_A$ with fibres defined as follows
	\begin{equation} \label{eqn:FA}
		(\cF_A)_{(x,w,z)} = \bigotimes_{i=1}^n \cV_i^{\otimes (l_z(w_i) - q_i(x))}\ ,
	\end{equation}
	where the $n$-fold outer tensor product is a graded tensor product over $\C$ and the interior tensor product is over $\MFfac{i}$ as above.  For $x, x_1, x_2 \in \liet$, $w \in A$ and $z, z_1, z_2 \in S^1\setminus\{1\}$ such that $(x_i, w, z) \in P_A$ and $(x,w,z_i) \in P_A$ the isomorphisms \eqref{eqn:multiplication} give rise to
	\begin{gather*}
		\cV_i^{\otimes (q_i(x_2) - q_i(x_1))} \otimes_{\MFfac{i}} \cV_i^{\otimes (l_z(w_i) - q_i(x_2))} \to \cV_i^{\otimes (l_z(w_i) - q_i(x_1))} \ ,\\
		\cV_i^{\otimes (l_{z_1}(w_i) - q_i(x))} \otimes_{\MFfac{i}} \cV_i^{\otimes (l_{z_2}(w_i) - l_{z_1}(w_i))} \to \cV_i^{\otimes (l_{z_2}(w_i) - q_i(x))} \ .
	\end{gather*}
	These piece together to give a left action by $\cL_A$ and (using \eqref{eqn:bimod_iso}) a right action by $\cE_A$ on the bundle $\cF_A$:
	\begin{gather}
		(\cL_A)_{(x_1,x_2)} \otimes (\cF_A)_{(x_2,w,z)} \to (\cF_A)_{(x_1,w,z)}	\label{eqn:left-LA-action} \ ,\\
		(\cF_A)_{(x,w,z_1)} \otimes (\cE_{A})_{(w,z_1,z_2)} \to (\cF_A)_{(x,w,z_2)} \label{eqn:right-EA-action}\ .
	\end{gather}
	The associativity of the isomorphisms \eqref{eqn:multiplication} implies $(\ell \cdot f) \cdot e = \ell \cdot (f \cdot e)$ for all $\ell \in (\cL_A)_{(x_1,x_2)}$, $f \in (\cF_A)_{(x_2, w, z_1)}$ and $e \in (\cE_A)_{(w,z_1.z_2)}$. 
	
	Recalling the structure of the opposite bimodule $\cV_i^{\rm op}$ (see \cite[Sec.~2.1]{paper:EvansPennig-Twists}) we obtain graded isomorphisms
	\(
		\left(\cV_i^{\otimes m}\right)^\op \to \cV_i^{\otimes (-m)}
	\).
	From the inner product on $\cV_i^{\otimes m}$ we therefore obtain an antilinear map 
	\[
		\cV_i^{\otimes m} \to \left(\cV_i^{\otimes m}\right)^\op \to \cV_i^{\otimes (-m)} \quad, \quad f \mapsto f^* \ .
	\] 
	It gives rise to two inner products $\lscal{\cL_A}{\,\cdot\,}{\,\cdot\,}$ and $\rscal{\,\cdot\,}{\,\cdot\,}{\cE_A}$ as follows: for $f_1 \in (\cF_A)_{(x_1,w,z)}$, $f_2 \in (\cF_A)_{(x_2,w,z)}$, $g_1 \in (\cF_A)_{(x,w,z_1)}$ and $g_2 \in (\cF_A)_{(x,w,z_2)}$ we define
	\begin{align*}
		\lscal{\cL_A}{f_1}{f_2}\ \ \, &= f_1 \cdot f_2^* \quad \in (\cL_A)_{(x_1,x_2)} \ ,\\
		\rscal{g_1}{g_2}{\cE_A} &= g_1^* \cdot g_2 \quad \in (\cE_A)_{(w,z_1,z_2)}\ ,
	\end{align*}
	where we identify the fibres of $\cE_{A}$ with tensor products of $\cV_i$'s as in \eqref{eqn:ET-iso}. 

\begin{lemma} \label{lem:equiv_mor_eq}
	Let $A \subseteq \bT$ be a closed subset. The Banach bundle $\cF_A \to P_A$ defined above gives rise to a $\bT$-equivariant Morita equivalence between the two Fell bundles $\cL_A \to \liet^{[2]}_A$ and $\cE_A \to Y^{[2]}_A$ in the sense of \cite[Def.~6.1]{paper:MuhlyWilliams}. Consequently,
	\[
		K_*^\bT(C^*\cL_A) \cong K_*^\bT(C^*\cE_A)\ .
	\]
\end{lemma}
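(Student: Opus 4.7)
The strategy is to verify the axioms of a $\bT$-equivariant Fell bundle equivalence in the sense of \cite[Def.~6.1]{paper:MuhlyWilliams} and then invoke their theorem on Morita equivalence of Fell bundle $C^*$-algebras, upgraded to the equivariant setting by keeping track of the $\bT$-action throughout. The key observation that makes the verification essentially routine is the product structure: both $\cL_A$ and (via \eqref{eqn:ET-iso}) $\cE_A$ are built out of the pieces $\cV_i^{\otimes m}$, so the algebraic axioms reduce to facts about tensor powers of the single $\MFfac{i}$-$\MFfac{i}$ imprimitivity bimodule $\cV_i$.

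First I would record that $P_A$ is a topological groupoid equivalence between $\liet^{[2]}_A$ and $Y^{[2]}_A$ in the sense of \cite[Def.~5.1]{paper:MuhlyWilliams}. The anchor maps are $(x,w,z) \mapsto x$ and $(x,w,z) \mapsto (w,z)$; both are open and induce homeomorphisms on orbit spaces since $\exp \colon \liet_A \to A$ is a principal $\Lambda$-bundle and the two projections $Y_A^{[2]} \to Y_A$ are principal bundles for the range/source actions. The left and right actions are free and proper by construction.

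Next I would check that $\cF_A \to P_A$ is a continuous Banach bundle. Continuity is the main technical point: the fibre \eqref{eqn:FA} is locally constant in the sense that on each component of $P_A$ the integers $l_z(w_i) - q_i(x)$ are locally constant in $(x,w,z)$. Concretely, for fixed $z \in S^1\setminus\{1\}$, the set of $(x,w)$ with $\exp(x) = w$ and $w_i \neq z$ breaks into pieces on which $l_z(w_i) - q_i(x)$ is constant, and a similar statement holds after varying $z$ in a small arc avoiding the $w_i$. Over each such piece $\cF_A$ is the trivial bundle with fibre a fixed tensor power of the $\cV_i$'s, and the glueings along boundaries are dictated by the isomorphisms \eqref{eqn:multiplication}. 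The actions \eqref{eqn:left-LA-action}, \eqref{eqn:right-EA-action} and the inner products $\lscal{\cL_A}{\cdot}{\cdot}$, $\rscal{\cdot}{\cdot}{\cE_A}$ are then fibrewise the standard bimodule operations on tensor powers of $\cV_i$ and are continuous on each piece, hence globally.

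The algebraic axioms -- associativity of the two actions, commutativity of the left and right actions, and the imprimitivity identity
\[
\lscal{\cL_A}{f_1}{f_2} \cdot f_3 = f_1 \cdot \rscal{f_2}{f_3}{\cE_A}
\]
-- all follow tensor-factor by tensor-factor from the corresponding identities for $\cV_i$, which are in turn consequences of the fact that $\cV_i$ is an $\MFfac{i}$-$\MFfac{i}$ Morita equivalence bimodule under the analogue of \eqref{eqn:left_mul}. Fullness of each inner product also reduces to the fact that $\cV_i$ is a (saturated) bimodule, which holds because $F(\text{span}\{e_i\}) \neq 0$ and the endomorphism identifications are $*$-iso\-mor\-phisms. $\bT$-equivariance of the whole structure is immediate from the construction: each factor carries an $S^1$-action compatible with tensor powers and with the isomorphisms \eqref{eqn:multiplication}, and the combined $\bT \subset (S^1)^n$ action respects all the maps above. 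I expect the continuity of $\cF_A$ at points where some $w_i$ coincides with $z$ (handled only through the restriction $z \neq w_i$ in the definition of $P_A$) to be the main technical nuisance, but this is avoided cleanly by the definition.

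Finally, with the Fell bundle equivalence in hand, \cite[Thm.~6.4]{paper:MuhlyWilliams} produces a $C^*$-algebra Morita equivalence between $C^*\cL_A$ and $C^*\cE_A$ via the $C_c(P_A,\cF_A)$-completion. The $\bT$-action lifts to this imprimitivity bimodule, yielding a $\bT$-equivariant Morita equivalence. Equivariant Morita equivalence induces an isomorphism on equivariant $K$-theory in the sense of Kasparov (by forming the associated $KK^\bT$-class, which is invertible), giving the claimed $K_*^\bT(C^*\cL_A) \cong K_*^\bT(C^*\cE_A)$.
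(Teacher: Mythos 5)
Your proposal is correct and follows essentially the same route as the paper: verify the Muhly--Williams equivalence axioms for $\cF_A$ (reducing the fibrewise statements to the fact that each $\cV_i$ is an $\MFfac{i}$-$\MFfac{i}$ imprimitivity bimodule), apply their Thm.~6.4 to the completion of $C_c(P_A,\cF_A)$, and then check $\bT$-equivariance of the actions and inner products to get an equivariant imprimitivity bimodule and hence the isomorphism on $K_*^\bT$. The extra detail you supply (the groupoid equivalence via $P_A$ and the local constancy giving continuity of the Banach bundle) is consistent with, and slightly more explicit than, the paper's own argument.
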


\begin{proof}
 The algebraic properties \cite[Def.~6.1, (b) (i) -- (iv)]{paper:MuhlyWilliams} are easily checked. Moreover, each fibre $(\cF_A)_{(x,w,z)}$ is an $(\cL_A)_{(x,x)}$-$(\cE_A)_{(w,z,z)}$ Morita equivalence, since $(\cL_A)_{(x,x)} = \MFfac{1} \otimes \dots \otimes \MFfac{n}$ and $(\cE_A)_{(w,z,z)} = \MF$ and $(\cF_A)_{(x,w,z)}$ is an imprimitivity bimodule for those algebras. 
	
	Therefore $\cF_A$ is an $\cL_A$-$\cE_{A}$-equivalence in the sense of \cite[Def.~6.1]{paper:MuhlyWilliams} and by \cite[Thm.~6.4]{paper:MuhlyWilliams} a completion of the compactly supported sections $C_c(P_A,\cF_A)$ with left and right action and inner products as stated in \cite[Thm.~6.4]{paper:MuhlyWilliams} is an imprimitivity bimodule between $C^*\cL_A$ and $C^*\cE_{A}$. By construction each $\cV_i$ is a graded $\MFfac{i}$-$\MFfac{i}$-Morita equivalence. Therefore the same is true for the completion of $C_c(P_A.\cF_A)$.   
	
	It remains to be seen why it defines a $\bT$-equivariant Morita equivalence. Note that $\bT$ acts trivially on $Y_A^{[2]}$ and $\liet^{[2]}_A$, since this action is induced by restricting the conjugation action. The Banach bundle $\cF_A \to P_A$ carries a fibrewise $\bT$-action induced by the $S^1$-actions on the $\cV_i$'s. The $\bT$-equivariance of the multiplication isomorphisms \eqref{eqn:multiplication} implies that the left action isomorphism \eqref{eqn:left-LA-action} and the right action isomorphism \eqref{eqn:right-EA-action} are both $\bT$-equivariant. The operation $(\,\cdot\,)^*$ intertwines the $S^1$-action on $\cV_i$ with the one on the opposite bimodule $\cV_i^{\text{op}}$. Therefore the $\cL_A$- and $\cE_A$-valued inner products are both $\bT$-equivariant as well. This implies that the completion of $C_c(P_A,\cF_A)$ is a $\bT$-equivariant imprimitivity bimodule between $C^*\cL_A$ and $C^*\cE_A$ in the sense of \cite[Def.~7.2]{book:RaeburnWilliams}.
\end{proof}

The additional definitions from this section can be found in Table~\ref{tab:symbols2} for the convenience of the reader.
\begin{table}[h]
	\renewcommand{\arraystretch}{1.3}
	\begin{tabular}{|c|l|}
		\hline
		Symbol & Description \\
		\hline
		$\liet^{[2]}$ & groupoid given by the fibre square of $\liet$ over $\bT$ \\
		$\cL_i$ & Fell bundle over $\liet^{[2]}$ with fibre $\cV_i$, see \eqref{eqn:Vi} \\
		$\cL$ & Fell bundle over $\liet^{[2]}$ given by $\cL_1 \otimes \dots \otimes \cL_n$ \\
		$\cL_A$ &  restriction of $\cL$ to $A \subseteq \bT \subset G$\\
		$\cF_A$ & Morita equivalence bundle between $\cE_A$ and $\cL_A$\\
		\hline
	\end{tabular}
	\renewcommand{\arraystretch}{1}
	\vspace{1mm}
	\caption{\label{tab:symbols2} Notation used throughout the paper}
\end{table}

\subsubsection{Normaliser and Weyl group actions} \label{subsec:WeylGroup}
Let $I \subset \{0, \dots, \ell\}$ be a subset of the vertices of $\Delta^\ell$ (where $\ell = n-1$ is the rank of $G$). Recall that $G_I$ is the centraliser of $w_I = \exp(\xi_I)$, see \eqref{eqn:GI-iso}. It is connected by \cite[part E, Ch.~II, Thm.~3.9]{book:SeminarAlgGroups}. Moreover, the Weyl group of $G_I$ is $W_I$. This implies that the restriction homomorphism $R(G_I) \to R(\bT)^{W_I}$ is an isomorphism. The element $F(\left.\rho\right|_{\bT})$ for the standard representation $\rho$ is invariant under $W_I$. Therefore this isomorphism survives the localisation and we have  
\[
\begin{tikzcd}
	R_F(G_I) \ar[r,"\cong"] & R_F(\bT)^{W_I}
\end{tikzcd}
\]
induced by the restriction map.

Let $N_G(\bT) \subseteq SU(n)$ be the normaliser of the maximal torus. This group consists of ``generalised permutation matrices``, i.e.\ matrices of determinant~1 whose only non-zero entries are complex numbers of norm $1$ that occur exactly once per row and column. It fits into a short exact sequence
\begin{equation} \label{eqn:TNW-sequence}
	1 \to \bT \to N_G(\bT) \to W \to 1\ .
\end{equation}
The group $W \cong S_n$ in this sequence is the Weyl group, which acts on $\bT \subset (S^1)^n$ by permuting the coordinates. This lifts to a corresponding action of $W$ on $Y_\bT$ with $\sigma \cdot (w,z) = (\sigma \cdot w, z)$ for $(w,z) \in Y_\bT$ and $\sigma \in W$. Hence, $W$ also acts diagonally on $Y_\bT^{[2]}$ by groupoid isomorphisms. Each element $\hat{\sigma} \in N_G(\bT)$ gives rise to a unitary transformation
\(
	\hat{\sigma} \colon \C^n \to \C^n 
\).

For a given point $(w,z_1,z_2) \in Y_\bT^{[2]}$ with $z_1 < z_2$ the map $\hat{\sigma}$ that lifts $\sigma \in W$ restricts to a unitary isomorphism of eigenspaces (note that $\sigma \cdot w = \hat{\sigma} w \hat{\sigma}^*$) 
\[
	\hat{\sigma}_{(w,z_1,z_2)} \colon \bigoplus_{z_1 < \lambda < z_2 \atop \lambda \in \EV{w}} \Eig{w}{\lambda} \to \bigoplus_{z_1 < \lambda < z_2 \atop \lambda \in \EV{\sigma \cdot w}} \Eig{\sigma \cdot w}{\lambda}
\]
and likewise for $z_1 \geq z_2$. We can consider the $C^*$-algebra $\MF$ (see Tab.~\ref{tab:symbols}) as an $N_G(\bT)$-algebra with the action given by $(\Ad_{F(\hat{\sigma})})^{\otimes \infty}$. The map 
\[
	F(\hat{\sigma}_{(w,z_1,z_2)}) \otimes \Ad_{F(\hat{\sigma})}^{\otimes \infty}
\] 
induces an isomorphism $(\cE_\bT)_{(w,z_1,z_2)} \to (\cE_{\bT})_{(\sigma \cdot w, z_1,z_2)}$, which intertwines the ordinary left and right $\MF$-action on $(\cE_\bT)_{(w,z_1,z_2)}$ with the ones on $(\cE_{\bT})_{(\sigma \cdot w, z_1,z_2)}$ that are twisted by $(\Ad_{F(\hat{\sigma})})^{\otimes \infty}$ and the $\bT$-action on the domain with the $\sigma$-permuted $\bT$-action on the codomain. 

Altogether, $\cE_\bT$ is an $N_G(\bT)$-equivariant Fell bundle with respect to this action. In fact, what we have described above is just the restriction of the given $G$-action on $\cE$ to an $N_G(\bT)$-action on $\cE_\bT$.

The group $N_G(\bT)$ also acts on $\cL$, defined in \eqref{eqn:Ltensor} and \eqref{eqn:Li}, in the following way: An element $\sigma \in W$ acts on $(x_1,x_2) \in \liet^{[2]}$ by $(\sigma\cdot x_1, \sigma \cdot x_2)$ with $\sigma$ permuting the coordinates of $x_i \in \liet \subset \R^n$. With respect to this action $\exp \colon \liet \to \bT$ is $W$-equivariant. Since $F$ is exponential, the fibres of $\cL$ are bimodules isomorphic to 
\[
	\cL_{(x_1,x_2)} \cong F\left( \bigoplus_{i=1}^n V_i^{\oplus q_i(x_2 - x_1)} \right) \otimes \MF\ .	
\]
where $V_i = \text{span}\{e_i\}$ and we define $V_i^{\oplus m} = (V_i^*)^{\oplus (-m)}$ if $m < 0$ and $V^{\oplus 0} = 0$. An element $\hat{\sigma} \in N_G(\bT)$ lifting $\sigma \in W$ provides a unitary isomorphism
\[	
	\hat{\sigma}_{(x_1,x_2)} \colon \bigoplus_{i=1}^n V_i^{\oplus q_i(x_2 - x_1)} \to \bigoplus_{i=1}^n V_i^{\oplus q_{i}(\sigma\cdot(x_2 - x_1))}
\]
by applying the restriction $\hat{\sigma} \colon V_i \to V_{\sigma(i)}$ or $(\hat{\sigma}^*)^{-1} \colon V_i^* \to V_{\sigma(i)}^*$ to each non-trivial summand. As above, $F(\hat{\sigma}_{(x_1,x_2)}) \otimes (\Ad_{F(\hat{\sigma})})^{\otimes \infty}$ gives an isomorphism between $\cL_{(x_1,x_2)}$ and 
\[
	\cL_{(\sigma \cdot x_1,\sigma \cdot x_2)} \cong 	F\left( \bigoplus_{i=1}^n V_i^{\oplus q_i(\sigma \cdot (x_2 - x_1))} \right) \otimes \MF\ .	
\]
intertwining the ordinary and twisted actions on these bimodules. Altogether, we have turned $\cL$ into an $N_G(\bT)$-equivariant Fell bundle.

Both of the $N_G(\bT)$-actions induce corresponding $W$-actions on the $\bT$-equivariant $K$-groups by the following general observation: Let $D$ be a unital $N_G(\bT)$-$C^*$-algebra. Denote the $N_G(\bT)$-action on $D$ by $\alpha$ and the action of $W$ on~$\bT$ by $\gamma$. Let $(E,\lambda)$ be a finitely generated projective $(\bT, D, \alpha)$-module in the sense of~\cite[Def.~2.2.1]{book:PhillipsEquivariant} on which the $\bT$-action is given by restricting $\alpha$, i.e.\ $E$ is a finitely generated projective right Hilbert $D$-module and $\lambda \colon \bT \to \mathcal{L}(E)$ is a continuous representation such that $\lambda_g(\xi \cdot a) = \lambda_g(\xi) \cdot \alpha_g(a)$. Let $\sigma \in W$, choose a lift $\hat{\sigma} \in N_G(\bT)$ of $\sigma$ and define $E_{\hat{\sigma}}$ to be the same Banach space as~$E$, but with the right $D$-multiplication modified by $\alpha$ as follows:
\[
	v \ast a = v\cdot \alpha_{\hat{\sigma}}(a)
\]
for $v \in E$, $a \in D$. The inner product can be adjusted accordingly. Let $\lambda_{\sigma} \colon \bT \to \mathcal{L}(E)$ be given by $\lambda_\sigma(w) = \lambda(\gamma_\sigma(w))$ for $w \in \bT$. The pair $(E_{\hat{\sigma}}, \lambda_\sigma)$ is again a finitely generated projective $(\bT, D, \alpha)$-module. 

The isomorphism class of $E_{\hat{\sigma}}$ does not depend on the chosen lift $\hat{\sigma}$. Indeed, any two choices $\hat{\sigma}_1, \hat{\sigma}_2$ of a lift of $\sigma$ will differ by an element of $\hat{w} \in \bT$. But $\bT$ is path-connected. Thus, a path between $\hat{w}$ and the identity gives rise to a homotopy between $(E_{\hat{\sigma}_1}, \lambda_{\sigma})$ and $(E_{\hat{\sigma}_2}, \lambda_{\sigma})$, which therefore represent the same element in $K_0^\bT(D)$. Hence, for $[E,\lambda] \in K_0^\bT(D)$ and $\sigma \in W$ 
\[
	\sigma \cdot [E,\lambda] = [E_{\hat{\sigma}^{-1}}, \lambda_{\sigma^{-1}}]
\]
defines a (left) action of $W$ on $K_0^\bT(D)$. Replacing $D$ by $D \otimes \C \ell_1$, where $\C \ell_1$ is the Clifford algebra of $\R$ we see that the $W$-action extends to $K_1^\bT(D)$. 

Finally, we also have a $W$-action on $P_\bT$, the base space of the bimodule bundle $\cF_\bT$ defined in \eqref{eqn:FA}: Let $\sigma \in W$ and $(x,w,z) \in P_\bT$. We define
\[
	\sigma \cdot (x,w,z) = (\sigma \cdot x, \sigma \cdot w, z)\ .
\]
With this action $P_\bT$ turns into a $W$-equivariant Morita equivalence between $\liet^{[2]}$ and $Y_\bT^{[2]}$. This $W$-action lifts to an $N_G(\bT)$-action $\cF_\bT$. Since the fibres of $\cF_\bT$ and $\cL$ are both constructed from the same bimodules $\cV_i$, this action is defined completely analogous to the one on $\cL$ and gives fibrewise $N_G(\bT)$-equivariant isomorphisms
\[
	(\cF_\bT)_{(x,w,z)} \to (\cF_{\bT})_{(\sigma \cdot x, \sigma \cdot w, z)}\ .
\]

Let $A \subseteq \bT$ be a closed subset and let $W_A \subseteq W$ be a subgroup such that $W_A \cdot A = A$. Let $\sfX_A$ be the completion of $C_c(P_A, \cF_A)$. By our observations above it provides a $N_G^A(\bT)$-equivariant imprimitivity bimodule between $C^*\cE_A$ and $C^*\cL_A$, where $N_G^A(\bT)$ is the preimage of $W_A$ in $N_G(\bT)$. Denote the $N_G^A(\bT)$-action on $\sfX_A$ by $\delta$. Let $(E,\lambda)$ be a finitely generated projective $(\bT, C^*\cE_A, \alpha)$-module. For $\sigma \in W_A$ and a lift $\hat{\sigma} \in N_G^A(\bT)$ the map 
\[
	E_{\hat{\sigma}} \otimes_{C^*\cE_A} \sfX_A \to (E \otimes_{C^*\cE_A} \sfX_A)_{\hat{\sigma}} \quad , \quad v \otimes x \mapsto v \otimes \delta_{\hat{\sigma}}(x)
\]
is an isomorphism of Hilbert $C^*\cL_A$-modules intertwining the two $\bT$-actions $\lambda_\sigma \otimes \id{\sfX_A}$ and $(\lambda \otimes \id{\sfX_A})_{\sigma}$. In particular, the following identity holds for classes in $K_0^\bT(C^*\cL_A)$
\[
	[E_{\hat{\sigma}} \otimes_{C^*\cE_A} \sfX_A, \lambda_\sigma \otimes \id{\sfX_A}] = [(E \otimes_{C^*\cE_A} \sfX_A)_{\hat{\sigma}}, (\lambda \otimes \id{\sfX_A})_{\sigma}] \in K_0^\bT(C^*\cL_A)\ .
\]
By forming the tensor product of $\cE_A$, $\cL_A$ and $\cF_A$ with $\C\ell_1$ we may extend this identity to $K_1^\bT(C^*\cL_A)$. Hence, we have proven the following lemma:

\begin{lemma} \label{lem:WTequiv_mor_eq}
	Let $A \subseteq \bT$ be a closed subset and let $W_A \subset W$ be a subgroup such that $W_A \cdot A = A$. The Banach bundle $\cF_A \to P_A$ gives rise to a $N_G^A(\bT)$-equivariant Morita equivalence between $\cL_A \to \liet^{[2]}_A$ and $\cE_A \to Y^{[2]}_A$. This Morita equivalence induces a $W_A$-equivariant isomorphism 
	\[
		K_*^\bT(C^*\cL_A) \cong K_*^\bT(C^*\cE_A)\ .
	\]
\end{lemma}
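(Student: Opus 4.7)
The plan is to deduce the lemma essentially by assembling the ingredients that have already been set up in the preceding discussion and in Lem.~\ref{lem:equiv_mor_eq}. First I would observe that because $W_A \cdot A = A$, the preimage $\liet_A = \exp^{-1}(A)$ is invariant under $W_A$, and similarly $Y_A$ is invariant under the $W_A$-action because $\sigma \in W_A$ sends $(w,z)$ with $w \in A$ to $(\sigma\cdot w, z)$ with $\sigma \cdot w \in A$. Consequently $\liet^{[2]}_A$, $Y^{[2]}_A$ and $P_A$ inherit $W_A$-actions by restriction of the corresponding actions on $\liet^{[2]}$, $Y^{[2]}_\bT$ and $P_\bT$ constructed above, and each of the Fell bundles $\cL_A$, $\cE_A$ and the bimodule bundle $\cF_A$ inherits the corresponding $N_G^A(\bT)$-action. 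The compatibility of the fibrewise isomorphisms $F(\hat{\sigma}_{(\cdot)}) \otimes (\Ad_{F(\hat{\sigma})})^{\otimes \infty}$ with the Fell-bundle structure, the left $\cL$-action and right $\cE_\bT$-action, and the two inner products on $\cF_\bT$ all follow by naturality of $F$ applied to the unitary isomorphisms between the various direct sums of eigenspaces, plus the fact that $\hat{\sigma}$ intertwines these decompositions. Thus $\sfX_A$ becomes an $N_G^A(\bT)$-equivariant $C^*\cL_A$-$C^*\cE_A$-imprimitivity bimodule in the sense of \cite[Def.~7.2]{book:RaeburnWilliams}.

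Next I would spell out the induced map on $\bT$-equivariant $K$-theory. Every class in $K_*^\bT(C^*\cE_A)$ is represented, possibly after tensoring with $\C\ell_1$, by a finitely generated projective $(\bT, C^*\cE_A, \alpha)$-module $(E,\lambda)$ (Phillips \cite[Ch.~2]{book:PhillipsEquivariant}), and the Morita equivalence sends its class to $[E \otimes_{C^*\cE_A} \sfX_A,\,\lambda \otimes \id{\sfX_A}]\in K_*^\bT(C^*\cL_A)$. To show $W_A$-equivariance it suffices to exhibit, for each $\sigma \in W_A$ and each lift $\hat{\sigma} \in N_G^A(\bT)$, a natural $\bT$-equivariant isomorphism of Hilbert $C^*\cL_A$-modules
\[
	E_{\hat{\sigma}} \otimes_{C^*\cE_A} \sfX_A \ \cong\ \bigl(E \otimes_{C^*\cE_A} \sfX_A\bigr)_{\hat{\sigma}}\ .
\]
This is precisely the map $v \otimes x \mapsto v \otimes \delta_{\hat{\sigma}}(x)$ highlighted in the discussion immediately preceding the lemma. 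Its well-definedness and inverse are a direct computation from the definition of the twisted module structure: the new right $C^*\cL_A$-action on the right-hand side is $(v \otimes x)\ast b = v \otimes (x \cdot \alpha^{\cL}_{\hat\sigma}(b))$, whereas on the left-hand side it is $(v \otimes x)\cdot b = v \otimes (x \cdot b)$; the transported map absorbs the twist by $\hat{\sigma}$ because $\delta$ is $\alpha$-equivariant for both module actions. Likewise $\lambda_\sigma \otimes \id{\sfX_A}$ and $(\lambda \otimes \id{\sfX_A})_\sigma$ are intertwined because $\delta_{\hat\sigma}$ intertwines the $\bT$-action on $\sfX_A$ with its $\sigma$-permuted version, an immediate consequence of the $\bT$-equivariance of the fibrewise isomorphisms of $\cF_A$.

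The only genuinely delicate point in the plan is verifying independence of the chosen lift $\hat{\sigma}$, which is needed for $\sigma \mapsto [(\,\cdot\,)_{\hat\sigma}]$ to descend to an action of $W_A$ rather than of $N_G^A(\bT)$. This, however, is handled exactly as in the general discussion preceding the lemma: two lifts $\hat{\sigma}_1,\hat{\sigma}_2$ differ by an element of $\bT$, and a path in $\bT$ connecting this element to the identity produces a homotopy of $(\bT, C^*\cE_A, \alpha)$-modules between $E_{\hat{\sigma}_1}$ and $E_{\hat{\sigma}_2}$ (and similarly of $C^*\cL_A$-modules after tensoring with $\sfX_A$). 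Combined with Lem.~\ref{lem:equiv_mor_eq}, which already gives that the underlying map on $K_*^\bT$ is an isomorphism, this yields the $W_A$-equivariant isomorphism claimed. The extension from $K_0^\bT$ to $K_1^\bT$ proceeds exactly as in the preceding discussion by tensoring everything in sight with $\C\ell_1$, which carries no action and therefore introduces no new equivariance issues.
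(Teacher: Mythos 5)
Your proposal is correct and follows essentially the same route as the paper: the paper proves Lem.~\ref{lem:WTequiv_mor_eq} precisely by the discussion preceding it, namely restricting the $N_G(\bT)$-actions to $N_G^A(\bT)$ using $W_A\cdot A=A$, exhibiting the intertwiner $v\otimes x\mapsto v\otimes\delta_{\hat\sigma}(x)$ between $E_{\hat\sigma}\otimes_{C^*\cE_A}\sfX_A$ and $(E\otimes_{C^*\cE_A}\sfX_A)_{\hat\sigma}$, using path-connectedness of $\bT$ for independence of the lift $\hat\sigma$, and passing to $K_1^\bT$ by tensoring with $\C\ell_1$. Your verification of how the twisted module structures are absorbed by $\delta_{\hat\sigma}$ matches the paper's argument, so there is nothing to add.
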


\subsubsection{Bredon cohomology} \label{subsec:Bredon}
Recall that $\Waff = \Lambda \rtimes W$. Identifying $R_F(G)$ with $R_F(\bT)^W \subseteq R_F(\bT)$ we may consider $R_F(\bT)$ as an $R_F(G)$-module. The exponential functor $F$ induces a group homomorphism 
\begin{align} \label{eqn:hom_to_GL1}
	\psi \colon \Lambda \to GL_1(R_F(\bT)) \quad , \quad (k_1, \dots, k_n) \mapsto &\ F(t_1)^{k_1} \cdots F(t_n)^{k_n} \\
	=&\ F(k_1t_1 + \dots + k_nt_n)\ . \notag
\end{align}
The group $W \cong S_n$ acts on $R_F(\bT)$ by permuting the variables $t_1, \dots, t_n$. Denote this action by $\ast$. The lattice $\Lambda$ acts by multiplication by the element of $GL_1(R_F(\bT))$ corresponding to it under $\psi$. This gives rise to a $\Waff$-action on $R_F(\bT)$ by $R_F(G)$-module isomorphisms defined for $(k,\sigma) \in \Waff$ acting on $f \in R_F(\bT)$ as follows
\begin{equation} \label{eqn:affine_action}
	(k, \sigma) \cdot f = \psi(k)\, (\sigma \ast f)\ .
\end{equation}

Let $\text{Orb}_{\Waff}$ be the orbit category of $\Waff$. Its objects are the sets $\Waff/H$ for subgroups $H \subset \Waff$. Morphisms $\Waff/H_1 \to \Waff/H_2$ are given by $\Waff$-equivariant maps. Such morphisms are in bijection with elements $[x] \in \Waff/H_2$ such that $H_1 \subseteq xH_2 x^{-1}$. A local coefficient system is a contravariant functor 
\[
	\text{Orb}_{\Waff} \to \text{Ab}\ .
\]
Define
\begin{equation} \label{eqn:coeff_sys}
	\cR(\Waff/H) = R_F(\bT)^H \qquad \text{and} \qquad \cR_\Q(\Waff/H) = R_F(\bT)^H \otimes \Q\ .
\end{equation}
A morphism given by $[x] \in \Waff/H_2$ maps an element $f \in R_F(\bT)^{H_2}$ to $x \cdot f \in R_F(\bT)^{H_1}$, where the dot denotes the $\Waff$-module structure from~\eqref{eqn:affine_action}. With this definition $\cR$ and $\cR_\Q$ are local coefficient systems.  

The simplex $\Delta^\ell \subset \liet$ is a fundamental domain for the action of $\Waff$ on $\liet$ and turns this space into a $\Waff$-CW-complex (see Tab.~\ref{tab:symbols} for the notation). Its $k$-cells are labelled by the subsets $I \subset \{0, \dots, \ell\}$ with $\lvert I \rvert = k+1$. Let $\widetilde{q} \colon \liet \to \Delta^\ell$ be the composition of the covering map $\exp \colon \liet \to \bT$ with the quotient map $\bT \to \Delta^\ell$ that parametrises conjugacy classes. From the closed cover $A_i$ of $\Delta^\ell$ defined in \eqref{eqn:equiv_cover} we obtain a closed cover $(B_i)_{i \in \{0, \dots, \ell\}}$ of $\liet$ with $B_i = \widetilde{q}^{-1}(A_i)$. A picture of the cover $(B_i)_{i \in \{0,1,2\}}$ for $SU(3)$ can be found in \cite[Fig.~5]{paper:EvansPennig-Twists}. Let $\xi_I$ be the barycentre of the subsimplex $\Delta_I \subseteq \Delta^\ell$. The cover $(B_i)_{i \in \{0,\dots,\ell\}}$ is $\Waff$-invariant and has the property that the inclusion maps 
\[
	\Waff\cdot \xi_I \to B_I
\] 
are equivariant homotopy equivalences, where $B_I = \bigcap_{i \in I} B_i$. These observations allow us to compute the Bredon cohomology groups $H^k_{\Waff}(\liet, \cR)$ using the Mayer-Vietoris spectral sequence with $E^1$-term
\[
	E^1_{p,q} = \bigoplus_{I \subset \{0,\dots, \ell\} \atop \lvert I \rvert = p+1} H^q_{\Waff}(B_I; \cR)\ .
\]
The inclusions $\Waff\cdot \xi_I \to B_I$ give rise to isomorphisms
\[
	H^q_{\Waff}(B_I; \cR) \cong H^q_{\Waff}(\Waff/\widetilde{W}_I; \cR) \cong
	\begin{cases}
		R_F(\bT)^{\widetilde{W}_I} & \text{if } q = 0 \ ,\\
		0 & \text{else}\ .	
	\end{cases} 
\]
Note that the stabiliser subgroup $\widetilde{W}_I$ is also the stabiliser of any other $\xi$ in the interior of the subsimplex $\Delta_I$. Moreover, for $J \subseteq I$ we have $\widetilde{W}_I \subseteq \widetilde{W}_J$, i.e.\ the stabilisers of points on the bounding faces contain the stabilisers of the interior points. For $J \subset I$ the above isomorphism intertwines the restriction homomorphism $H^0_{\Waff}(	B_J;\cR) \to H^0_{\Waff}(B_I;\cR)$ with the inclusion $R_F(\bT)^{\widetilde{W}_J} \to R_F(\bT)^{\widetilde{W}_I}$. Thus, the $E^1$-page boils down to the cochain complex
\[
	C^k_{\Waff}(\liet; \cR) = \bigoplus_{\lvert I \rvert = k+1} R_F(\bT)^{\widetilde{W}_I} \quad, \quad d_k^{\text{cell}} \colon C^k_{\Waff}(\liet; \cR) \to C^{k+1}_{\Waff}(\liet; \cR)
\]
with differentials given by alternating sums of restriction homomorphisms.

Let $w_I = \exp(\xi_I) \in \bT$ and let $Y_{w_I} = \pi^{-1}(w_I)$ where $\pi \colon Y \to G$ for $Y$ as in \eqref{eqn:Y} is the projection map. We can identify $Y_{w_I}$ with $S^1 \setminus (\{1\} \cup \EV{w_I})$. Let $\cE_{w_I} \to Y^{[2]}_{w_I}$ be the restriction of $\cE$ to the subgroupoid $Y^{[2]}_{w_I}$ of $Y^{[2]}$, which we will identify with 
\[
	 \left\{ (z_1,z_2) \in S^1 \setminus \{1\} \ | \ z_i \notin \EV{w_I} \text{ for } i \in \{1,2\} \right\}\ .
\]
To compare the differentials in the cochain complex $C^*_{\Waff}(\liet;\cR)$ with corresponding homomorphisms in $K$-theory, we need to find explicit isomorphisms $K_*^{G_I}(C^*\cE_{w_I}) \cong K_*^{G_I}(\MF)$. These are given by Morita equivalences that are constructed as in \cite[Lem.~4.3]{paper:EvansPennig-Twists}, which we briefly recall now: Note that any 
\(
	z_0 \in Y_{w_I}
\)
gives a map $\sigma^Y_I \colon Y_{w_I} \to Y^{[2]}_{w_I}$ with $\sigma^Y_I(z) = (z,z_0)$. Let $\tilde{\cF}_{w_I} = (\sigma_I^Y)^*\cE_{w_I}$. It was shown in \cite[Lem.~4.3]{paper:EvansPennig-Twists} that $\tilde{\cF}_{w_I}$ is a $G_I$-equivariant Morita equivalence of Fell bundles between $\cE_{w_I}$ and the trivial bundle over the point with fibre $\MF$. In particular, there are two fibrewise inner products on $\tilde{\cF}_{w_I}$, one with values in $\cE_{w_I}$, the other one with values in $\MF$. The two completions with respect to the norms obtained from them agree and 
\[
	\sfX_{w_I} = \overline{C_c(Y_{w_I}, \tilde{\cF}_{w_I})}^{\lVert \cdot \rVert}
\] 
is a $G_I$-equivariant Morita equivalence bimodule between $C^*(\cE_{w_I})$ acting from the left on~$\sfX_{w_I}$ and $\MF$ acting from the right. We will first use this to show that the restriction map $K_0^{G_I}(C^*\cE_{w_I}) \to K_0^\bT(C^*\cE_{w_I})$ is injective with image equal to the $W_I$-fixed points.

Combining the equivalence $\sfX_{w_I}$ with $K_0^{G_I}(\MF) \cong R_F(G_I)$ (induced by the colimit of the isomorphisms $K_0^{G_I}(\Endo{V}) \cong R(G_I)$) gives 
\[
	K_0^{G_I}(C^*\cE_{w_I}) \cong R_F(G_I)\ .
\] 
As a consequence we obtain the following commutative diagram: 
\[
	\begin{tikzcd}[column sep=1.5cm, row sep=0.6cm]
		K_0^{G_I}(C^*\cE_{w_I}) \ar[r,"\text{res}_\bT^{G_I}"] \ar[d,"\cong" left] & K_0^\bT(C^*\cE_{w_I})^{W_I} \ar[d,"\cong"] \\
		R_F(G_I) \ar[r,"\cong" below] & R_F(\bT)^{W_I}
	\end{tikzcd}
\]
In particular, the map $\text{res}_\bT^{G_I}$ induced by restricting the group action from~$G_I$ to $\bT$ has image in the fixed-points and is an isomorphism. 

The vertical maps in the above diagram depend on the choice of $z_0$, which is difficult to track. Luckily, the Morita equivalence with $C^*\cL_{w_I}$ provides an alternative as follows: Let $\sigma^\liet_I \colon \xi_I + \Lambda \to \liet^{[2]}$ be given by $\sigma^\liet_I(\eta) = (\eta, \xi_I)$. Using \cite[Lem.~4.3]{paper:EvansPennig-Twists} again there is a completion 
\[	
	\sfX_{w_I}^\liet = \overline{C_c(\xi_I + \Lambda, (\sigma_I^\liet)^*\cL)}^{\lVert \cdot \rVert}
\]
of the compactly supported sections, which provides a $\bT$-equivariant Morita equivalence between $C^*\cL_{w_I}$ and $\MF$, and therefore an isomorphism
\[
	K_0^\bT(C^*\cL_{w_I}) \cong K_0^\bT(\MF) \cong R_F(\bT)\ .
\]
We call this Morita equivalence the trivialisation of $C^*\cL_{w_I}$. The $\bT$-equivariant Morita equivalence between $C^*(\cL_{w_I})$ and $C^*(\cE_{w_I})$ constructed in the proof of Lem.~\ref{lem:equiv_mor_eq} combined with the above map induces a group isomorphism  
\[	
	\begin{tikzcd}
		\kappa_I \colon K_0^\bT(C^*\cE_{w_I}) \ar[r,"\cong" above] & K_0^\bT(C^*\cL_{w_I}) \ar[r,"\cong"] & R_F(\bT)
	\end{tikzcd}
\]

By Lem.~\ref{lem:WTequiv_mor_eq} the group $W_I$ acts on the two $K$-groups $K_0^\bT(C^*\cE_{w_I})$ and $K_0^\bT(C^*\cL_{w_I})$ in such a way that the above isomorphism is $W_I$-equivariant. The price to pay for our more natural choice of isomorphism is that this map $K_0^\bT(C^*\cL_{w_I}) \to R_F(\bT)$ will no longer be equivariant with respect to the permutation action of $W_I$ on $R_F(\bT)$ as the next lemma shows. 

\begin{lemma} \label{lem:cochain_complex}
	The isomorphism $\kappa_I \colon K_0^\bT(C^*\cE_{w_I}) \to R_F(\bT)$ satisfies 
	\[
		\kappa_I(\rho \cdot x) = \varphi_I(\rho) \cdot \kappa_I(x)
	\]
	for $x \in K_0^\bT(C^*\cE_{w_I})$ and $\rho \in W_I$, where $\varphi_I \colon W_I \to \widetilde{W}_I$ is the group isomorphism defined in \eqref{eqn:inv_of_quot_map} and the $\widetilde{W}_I$-action is the restriction of \eqref{eqn:affine_action} to $\widetilde{W}_I \subseteq \Waff$. In particular, $\kappa_I$ restricts to an isomorphism 
	\[
		K_0^{G_I}(C^*\cE_{w_I}) \to K_0^\bT(C^*\cE_{w_I})^{W_I} \to R_F(\bT)^{\widetilde{W}_I}
	\]
	that makes the following diagram commute
	\[
		\begin{tikzcd}[column sep=2.5cm, row sep=0.8cm]
			\bigoplus_{\lvert I \rvert = p+1} K_0^{G_I}(C^*\cE_{w_I}) \ar[r,"d_1"] \ar[d,"\bigoplus \kappa_I" left, "\cong" right] & \bigoplus_{\lvert I \rvert = p+2} K_0^{G_I}(C^*\cE_{w_I}) \ar[d,"\bigoplus \kappa_I", "\cong" left] \\
			C^p_{\Waff}(\liet; \cR) \ar[r,"d_1^{\text{cell}}" below] & C^{p+1}_{\Waff}(\liet; \cR)
		\end{tikzcd}
	\]
\end{lemma}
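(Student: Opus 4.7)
By Lem.~\ref{lem:WTequiv_mor_eq} the Morita equivalence between $C^*\cE_{w_I}$ and $C^*\cL_{w_I}$ obtained from $\cF_\bT$ is $W_I$-equivariant, so the equivariance claim reduces to computing how the $W_I$-action on $K_0^\bT(C^*\cL_{w_I})$ is transferred to $R_F(\bT)$ through the trivialising Morita equivalence $\sfX_{w_I}^\liet$. The crucial observation is that the section $\sigma_I^\liet(\eta) = (\eta, \xi_I)$ defining this trivialisation is not $W_I$-equivariant: for $\rho \in W_I$ and $\eta \in \xi_I + \Lambda$ the natural image $\rho \cdot \sigma_I^\liet(\eta) = (\rho \cdot \eta, \rho \cdot \xi_I)$ differs from $\sigma_I^\liet(\rho \cdot \eta) = (\rho \cdot \eta, \xi_I)$ by right-multiplication in the groupoid $\liet^{[2]}$ with the element $(\rho \cdot \xi_I, \xi_I)$.

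Promoting $\sfX_{w_I}^\liet$ to a $W_I$-equivariant bimodule between $C^*\cL_{w_I}$ and $\MF$ (with its canonical $N_G(\bT)$-action) therefore requires, for each $\rho \in W_I$, post-composing the fibrewise action on $\cL$ with Fell bundle multiplication by a chosen element $u_\rho \in \cL_{(\rho \cdot \xi_I, \xi_I)}$. By \eqref{eqn:Ltensor} and the definition of $\cL_i$ this fibre decomposes as
\[
    \cL_{(\rho \cdot \xi_I, \xi_I)} \cong \bigotimes_{i=1}^n \cV_i^{\otimes q_i(c_I(\rho))}\ ,
\]
with $c_I(\rho) = \xi_I - \rho \cdot \xi_I \in \Lambda$ the cocycle of \eqref{eqn:inv_of_quot_map}. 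Under the identification $K_0^\bT(\MF) \cong R_F(\bT)$ arising from \eqref{eqn:factor_iso}, this bimodule represents the unit $\prod_{i=1}^n F(t_i)^{q_i(c_I(\rho))} = \psi(c_I(\rho))$ by \eqref{eqn:hom_to_GL1}. Consequently, the induced $W_I$-action on $R_F(\bT)$ sends $f$ to $\psi(c_I(\rho)) \cdot (\rho \ast f) = \varphi_I(\rho) \cdot f$, establishing the first claim. Since $\varphi_I$ is an isomorphism onto $\widetilde{W}_I$, $\kappa_I$ restricts to an isomorphism from the $W_I$-invariants in $K_0^\bT(C^*\cE_{w_I})$ onto $R_F(\bT)^{\widetilde{W}_I}$; combined with the commutative square $R_F(G_I) \cong R_F(\bT)^{W_I}$ preceding the lemma this produces the desired factorisation starting from $K_0^{G_I}(C^*\cE_{w_I})$.

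For the commutativity of the diagram with cochain differentials, fix $J \subset I$ with $\lvert I \rvert = \lvert J \rvert + 1$. The corresponding component of $d_1$ in \eqref{eqn:spec_seq} is the composition of the deformation-retract isomorphism $K_0^{G_J}(C^*\cE_{w_J}) \cong K_0^G(C^*\cE_{\hat{A}_J})$, the restriction induced by the closed inclusion $\hat{A}_I \subseteq \hat{A}_J$, and the inverse deformation retract at level $I$. Via Lem.~\ref{lem:equiv_mor_eq} this becomes the analogous restriction on $C^*\cL$, which under the two trivialisations $\sigma_J^\liet$ and $\sigma_I^\liet$ is, by naturality, the inclusion $R_F(\bT)^{\widetilde{W}_J} \hookrightarrow R_F(\bT)^{\widetilde{W}_I}$ (well-defined since $\widetilde{W}_I \subseteq \widetilde{W}_J$), coinciding with the cellular coboundary $d_1^{\text{cell}}$. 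The main technical obstacle is the precise identification of the class of $\cL_{(\rho \cdot \xi_I, \xi_I)}$ as $\psi(c_I(\rho))$ and the verification that the resulting collection $\{u_\rho\}_{\rho \in W_I}$ satisfies the cocycle condition required for equivariance; both follow from the tensor decomposition \eqref{eqn:factor_iso}, the exponential property of $F$, and the fact that $c_I$ is the natural coboundary of $\xi_I$ for the pullback to $W_I$ of the extension \eqref{eqn:Weyl_group_ext}.
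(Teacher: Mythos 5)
Your plan is essentially the paper's argument: reduce via Lem.~\ref{lem:WTequiv_mor_eq} to the $\cL$-side, and measure the failure of equivariance of the trivialisation based at $\xi_I$ by the bimodule $\cL_{(\rho\cdot\xi_I,\xi_I)}\cong\bigotimes_i \cV_i^{\otimes q_i(c_I(\rho))}$, whose class in $K_0^\bT(\MF)\cong R_F(\bT)$ is $\psi(c_I(\rho))$, giving $\kappa_I(\rho\cdot x)=\psi(c_I(\rho))\,(\rho\ast\kappa_I(x))=\varphi_I(\rho)\cdot\kappa_I(x)$. The paper implements the same idea slightly differently: it first destabilises $C^*\cL_{w_I}$ to the finite orbit $\mathfrak{s}_I=W_I\cdot\xi_I$ and computes the class of $(E\otimes_{\MF}\sfX_I^{\text{op}})_{\hat{\rho}^{-1}}\otimes_{C^*\hat{\cL}}\sfX_I$, rather than arguing directly on the section $\sigma^\liet_I$. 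One point in your mechanism is not available as stated: you cannot promote $\sfX^\liet_{w_I}$ to a $W_I$-equivariant bimodule by choosing elements $u_\rho\in\cL_{(\rho\cdot\xi_I,\xi_I)}$ satisfying a cocycle condition. These fibres are $\MF$-$\MF$ Morita bimodules of non-trivial class (precisely $\psi(c_I(\rho))\neq 1$ in general), so multiplication by a single element is never an isomorphism and no such choice can restore plain equivariance; the correct statement is the twisted equivariance itself, obtained by tensoring with the whole correction bimodule. Since your actual computation only uses the class of that bimodule, the conclusion is unaffected, but the ``cocycle condition for the $u_\rho$'' should be dropped.

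For the commutativity with the differentials, your appeal to ``naturality'' hides the only genuine issue: the trivialisations at $J$ and $I$ are based at different points $\xi_J$ and $\xi_I$, so one must rule out an extra unit twist in the induced map $R_F(\bT)^{\widetilde{W}_J}\to R_F(\bT)^{\widetilde{W}_I}$. The paper settles this by noting that the $\Lambda$-equivariant translation $\xi_J+\Lambda\to\xi_I+\Lambda$ lifts to a $W_I$-equivariant Fell bundle isomorphism $\psi_{IJ}\colon C^*\cL_{w_J}\to C^*\cL_{w_I}$ that intertwines the two sections $\sigma^\liet_J$ and $\sigma^\liet_I$; this is exactly what makes the composite the plain inclusion of fixed rings and hence the cellular coboundary. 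With that step made explicit, your outline coincides with the paper's proof.
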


\begin{proof}
	By Lem.~\ref{lem:WTequiv_mor_eq} the isomorphism $K_0^\bT(C^*\cE_{w_I}) \to K_0^\bT(C^*\cL_{w_I})$ induced by $\cF_{w_I}$ is $W_I$-equivariant. Hence, it suffices to consider the $W_I$-equivariance of $K_0^\bT(C^*\cL_{w_I}) \to K_0^\bT(\MF)$. Let $N_I = N_G^{\{w_I\}}(\bT)$ be the preimage of $W_I$ in $N_G(\bT)$ with respect to the quotient map $N_G(\bT) \to W$. Recall that $w_I = \exp(\xi_I)$ is fixed by $W_I$. As a first step we can ``destabilise'' $C^*\cL_{w_I}$ in an $N_I$-equivariant way as follows: Consider the finite set
	\[
		\mathfrak{s}_I = \{ \sigma \cdot \xi_I \in \liet\ |\ \sigma \in W_I \}\ .
	\]
	Let $Q_{w_I} = \{ (x_1,x_2) \in \liet \times \mathfrak{s}_I \ | \ \exp(x_1) = \exp(x_2) = w_I\}$. Let $\iota_I \colon Q_{w_I} \to \liet^{[2]}$ be the inclusion map and let $\hat{\cF}_{w_I} = \iota_I^*\cL$. Likewise, let $j_I \colon \mathfrak{s}_I^2 \to \liet^{[2]}$ be the inclusion of the product into the fibre product. Similar to \cite[Ex.~6.6]{paper:MuhlyWilliams} the Banach bundle $\hat{\cF}_{w_I}$ provides a Morita equivalence between  $\cL_{w_I}$ and the Fell bundle $\hat{\cL}_{w_I} = j_I^*\cL \to \mathfrak{s}_I^2$. By construction this equivalence is $N_I$-equivariant. As explained in Sec.~\ref{subsec:WeylGroup}, the completion of $C_c(Q_{w_I}, \hat{\cF}_{w_I})$ gives a $W_I$-equivariant isomorphism $K_0^\bT(C^*\cL_{w_I}) \to K_0^\bT(C^*\hat{\cL}_{w_I})$. The trivialisation $K_0^\bT(C^*\cL_{w_I}) \to K_0^\bT(\MF)$ factors through $K_0^\bT(C^*\hat{\cL}_{w_I})$. Let
	\[
		\iota_{\mathfrak{s}_I} \colon \mathfrak{s}_I \to \mathfrak{s}_I^2 \quad , \quad \eta \mapsto (\eta, \xi_I)\ .
	\]
	The isomorphism $K_0^\bT(C^*\hat{\cL}_{w_I}) \to K_0^\bT(\MF)$ is induced by the Banach bundle $\mathcal{T}_I \to \mathfrak{s}_I$ given by $\iota_{\mathfrak{s}_I}^*\hat{\cL}$ with fibres
	\[
		(\mathcal{T}_I)_{\sigma \cdot \xi_I} = \bigotimes_{j=1}^n \cV_j^{\otimes q_j(\xi_I - \sigma \cdot \xi_I)} \cong F\!\left( \bigoplus_{j=1}^n V_j^{\oplus q_j(\xi_I - \sigma \cdot \xi_I)}\right) \otimes \MF\ .
	\]
	where $V_j = \text{span}\{e_j\}$. Analogous to the proof of Lem.~\ref{lem:equiv_mor_eq} the isomorphisms
	\[
		\cV_j^{\otimes q_j(\sigma_2 \cdot \xi_I - \sigma_1 \cdot \xi_I)} \otimes_{\MF} \cV_j^{\otimes q_j(\xi_I - \sigma_2 \cdot\xi_I)} \to \cV_j^{\otimes q_j(\xi_I - \sigma_1 \cdot\xi_I)} 
	\]
	combine to define a left action of $\hat{\cL}_{w_I}$ on $\mathcal{T}_I$. Together with the canonical right action by $\MF$ the bundle $\mathcal{T}_I$ provides a Morita equivalence between $\hat{\cL}_{w_I}$ and the trivial Fell bundle with fibre $\MF$ over the point. Let
	\[
		\sfX_I = C(\mathfrak{s}_I, \mathcal{T}_I) = \bigoplus_{\sigma \in W_I} \bigotimes_{j=1}^n \cV_j^{\otimes q_j(\xi_I - \sigma \cdot \xi_I)}
	\]
	be the associated $C^*\hat{\cL}_{w_I}$-$\MF$-imprimitivity bimodule. The Banach bundle $\mathcal{T}_I$ carries a fibrewise $\bT$-action that turns $\sfX_I$ into a $\bT$-equivariant bimodule. However, the homomorphism on $K_0^\bT$ induced by $\sfX_I$ is not $W_I$-equivariant when $K_0^\bT(\MF)$ is equipped with the $W_I$-action induced by the natural $N_I$-action on the algebra. Let $\hat{\rho} \in N_I$ be a lift of $\rho \in W_I$. This lift induces a unitary isomorphism
	\[
		\bigoplus_{j=1}^n V_j^{\oplus q_j(\xi_I - \sigma \cdot \xi_I)} \to \bigoplus_{j=1}^n V_j^{\oplus q_j(\rho \cdot \xi_I - \rho\cdot \sigma \cdot \xi_I)}
	\]
	as described in Sec.~\ref{subsec:WeylGroup}. By applying $F$ to it and taking the direct sum over all $\sigma \in W_I$ we obtain a bimodule isomorphism 
	\[
		\sfX_I \to 
		\tensor*[_{\hat{\rho}\!\!}]{\left(\bigoplus_{\sigma \in W_I} \bigotimes_{j=1}^n \cV_j^{\otimes q_j(\rho \cdot \xi_I - \rho \cdot \sigma \cdot \xi_I)}\right)}{_{\!\!\!\hat{\rho}}}
		\cong \tensor*[_{\hat{\rho}\!\!}]{\left(\bigoplus_{\sigma \in W_I} \bigotimes_{j=1}^n \cV_j^{\otimes q_j(\rho \cdot \xi_I - \sigma \cdot \xi_I)}\right)}{_{\!\!\!\hat{\rho}}}\ ,
	\]
	where the subscript $\hat{\rho}$ denotes the $(\Ad_{F(\hat{\rho})})^{\otimes \infty}$-twisted left and right actions of $C^*\hat{\cL}_{w_I}$ and $\MF$, respectively. The codomain of the above isomorphism is isomorphic as a bimodule to: 
	\[
		\tensor*[_{\hat{\rho}\!\!}]{\left(\sfX_I \otimes_{\MF}  \bigotimes_{j=1}^n \cV_j^{\otimes q_j(\rho \cdot \xi_I - \xi_I)}\right)}{_{\!\!\!\hat{\rho}}} \cong \tensor*[_{\hat{\rho}}]{(\sfX_I)}{_{\hat{\rho}}} \otimes_{\MF} \bigotimes_{j=1}^n \cV_j^{\otimes q_j(\xi_I - \rho^{-1}\cdot\xi_I)} \ ,
	\]
	where we applied the map induced by $\hat{\rho}^{-1}$ to the second tensor factor.	
	
	Let $(E,\lambda)$ be a finitely generated projective $(\bT, \MF, \alpha)$-module and let $\rho \in W_I$. Let $x = \kappa_I^{-1}([E,\lambda])$. The element $\kappa_I( \rho \cdot x) \in K_0^\bT(\MF)$
	is represented by the module $(E \otimes_{\MF} \sfX_I^{\text{op}})_{\hat{\rho}^{-1}} \otimes_{C^*\hat{\cL}} \sfX_I$ for an arbitrary lift $\hat{\rho} \in N_I$ of $\rho \in W_I$. (The inverse $\hat{\rho}^{-1}$ appears here, because we wanted the action of $W_I$ to be a left action.) A brief computation shows 
	\begin{align*} 
		 \ \ &(E \otimes_{\MF} \sfX_I^{\text{op}})_{\hat{\rho}^{-1}} \otimes_{C^*\hat{\cL}} \sfX_I \\  
	\cong  &\left(E \otimes_{\MF} (\sfX_I^{\text{op}} \otimes_{C^*\hat{\cL}} \sfX_I) \right)_{\!\!\hat{\rho}^{-1}} \otimes_{\MF} \bigotimes_{j=1}^n \cV_j^{\otimes q_j( \xi_I - \rho \cdot\xi_I)}\\[-4mm]
	\cong &\ E_{\hat{\rho}^{-1}} \otimes_{\MF} \bigotimes_{j=1}^n \cV_j^{\otimes q_j(\xi_I - \rho \cdot \xi_I)}\ .
	\end{align*}
	After applying the isomorphism $K_0^\bT(\MF) \cong R_F(\bT)$ the module in the last line represents the $K$-theory class 
	\begin{align*} 
		 F\!\left(\sum_{j=1}^n q_j( \xi_I - \rho\cdot \xi_I)t_j\right) \! (\rho \ast \kappa_I(x)) 
		= \psi(c_I(\rho))\, (\rho \ast \kappa_I(x)) = \varphi_I(\rho) \cdot \kappa_I(x)\ ,
	\end{align*}
	where we used the action of $\widetilde{W}_I \subset \Waff$ given in \eqref{eqn:affine_action} and the cocycle $c_I \colon W_I \to \Lambda$ defining the isomorphism $\varphi_I \colon W_I \to \widetilde{W}_I$. 
	
	Let $J \subseteq I \subseteq \{0,\dots, \ell\}$. Note that $G_I \subseteq G_J$, $W_I \subseteq W_J$ and $\hat{A}_I \subseteq \hat{A}_J$, where we use the notation from \eqref{eqn:AI_VI}. As explained in Sec.~\ref{subsec:Bredon} we also have $\widetilde{W}_I \subseteq \widetilde{W}_J$. To see that the diagram containing the differentials commutes it suffices to see the commutativity of 
	\[
		\begin{tikzcd}
			K_0^{G_J}(C^*\cE_{w_J}) \ar[r] \ar[d, "\kappa_J" left, "\cong" right] & K_0^{G_I}(C^*\cE_{w_I}) \ar[d, "\kappa_I" right, "\cong" left] \\
			R_F(\bT)^{\widetilde{W}_J} \ar[r] & R_F(\bT)^{\widetilde{W}_I}
		\end{tikzcd}
	\]
	where the lower horizontal arrow is given by the inclusion of fixed-points. The upper horizontal arrow is the following homomorphism: Consider the composition
	\[
		\begin{tikzcd}[column sep=0.9cm]
			K_0^{G_J}(C^*\cE_{\hat{A}_J}) \ar[r,"r^G_{IJ}" below] & K_0^{G_I}(C^*\cE_{\hat{A}_J}) \ar[r,"r^{\hat{A}}_{IJ}" below] & K_0^{G_I}(C^*\cE_{\hat{A}_I}) \ ,
		\end{tikzcd}
	\]  
	where $r^G_{IJ}$ and $r^{\hat{A}}_{IJ}$ are the maps obtained by restricting the group action and the base space of the Fell bundle, respectively. The upper arrow in the above diagram is then the composition of this homomorphism with the isomorphisms $K_0^{G_S}(C^*\cE_{\hat{A}_S}) \to K_0^{G_S}(C^*\cE_{z_S})$ for $S=I$ and $S=J$. Let $q_\bT \colon \bT \to \Delta^\ell$ send a point in $\bT$ to its conjugacy class and define $\widetilde{B}_J = q_\bT^{-1}(A_J)$ with $A_J$ as in \eqref{eqn:AI_VI}. Let $B_J = \exp^{-1}(\widetilde{B}_J)$, where $\exp \colon \liet \to \bT$ is the exponential map. The following diagram commutes
	\[
		\begin{tikzcd}[column sep=0.9cm,row sep=0.7cm]
			K_0^{G_J}(C^*\cE_{\hat{A}_J}) \ar[r] \ar[d,"\cong"] & K_0^{G_I}(C^*\cE_{\hat{A}_J}) \ar[r] \ar[d,"\cong"] & K_0^{G_I}(C^*\cE_{\hat{A}_I}) \ar[d,"\cong"] \\
			K_0^{\bT}(C^*\cE_{\widetilde{B}_J})^{W_J} \ar[r] \ar[d,"\cong"] & K_0^{\bT}(C^*\cE_{\widetilde{B}_J})^{W_I} \ar[r] \ar[d,"\cong"] & K_0^{\bT}(C^*\cE_{\widetilde{B}_I})^{W_I} \ar[d,"\cong"] \\
			K_0^{\bT}(C^*\cL_{B_J})^{W_J} \ar[r] & K_0^{\bT}(C^*\cL_{B_J})^{W_I} \ar[r] & K_0^{\bT}(C^*\cL_{B_I})^{W_I} 
		\end{tikzcd}
	\]  
	Note that $\exp^{-1}(w_J) = \xi_J + \Lambda$ and $C^*\cL_{w_J}$ is the $C^*$-algebra associated to the restriction of $\cL$ to $(\xi_J + \Lambda)^2 \subset \liet^{[2]}$. The $\Lambda$-equivariant bijection 
	\[
		\xi_J + \Lambda \to \xi_I + \Lambda
	\]
	that sends $\xi_J$ to $\xi_I$ lifts to a $W_I$-equivariant isomorphism $\cL_{w_J} \to \cL_{w_I}$ of Fell bundles giving a $*$-isomorphism $\psi_{IJ} \colon C^*\cL_{w_J} \to C^*\cL_{w_I}$. The group homomorphism $K_0^{\bT}(C^*\cL_{w_J})^{W_J} \to K_0^{\bT}(C^*\cL_{w_I})^{W_I}$ induced by the bottom row of the above diagram is the same as the one given by restricting the group action from $W_J$ to $W_I$ and then applying $\psi_{IJ}$. Consider the following diagram
	\[
		\begin{tikzcd}[row sep=0.5cm]
			K_0^{\bT}(C^*\cL_{w_J})^{W_J} \ar[r,"r^W_{IJ}"] \ar[d] & K_0^{\bT}(C^*\cL_{w_J})^{W_I} \ar[r,"\psi_{IJ}"] \ar[d] & K_0^{\bT}(C^*\cL_{w_I})^{W_I} \ar[dl] \\
			R_F(\bT)^{\widetilde{W}_J} \ar[r] & R_F(\bT)^{\widetilde{W}_I}
		\end{tikzcd}
	\]
%	\[
%		\begin{tikzcd}[row sep=0.5cm]
%			K_0^{\bT}(C^*\cL_{w_J})^{W_J} \ar[d,"r^W_{IJ}"] \ar[r] & R_F(\bT)^{\widetilde{W}_J} \ar[d] \\
%			K_0^{\bT}(C^*\cL_{w_J})^{W_I} \ar[d,"\psi_{IJ}"] \ar[r] & R_F(\bT)^{\widetilde{W}_I} \\
%			K_0^{\bT}(C^*\cL_{w_I})^{W_I} \ar[ur]
%		\end{tikzcd}
%	\]
	where the vertical and diagonal arrow are induced by the trivialisation of~$\cL_{w_J}$, respectively $\cL_{w_I}$. Since the trivialisation of $\cL_{w_J}$ is $W_J$-equivariant, the square in the diagram commutes. The homeomorphism $\xi_J + \Lambda \to \xi_I + \Lambda$ intertwines the two sections $\sigma_I \colon \xi_I + \Lambda \to \liet^{[2]}$ and $\sigma_J \colon \xi_J + \Lambda \to \liet^{[2]}$. This shows that the triangle in the diagram also commutes. Combining this diagram with the one from above proves the statement about the differentials.
\end{proof}

\subsection{Rationalisation and regular sequences}
Lemma~\ref{lem:cochain_complex} reduces the computation of the $E_1$-page of the spectral sequence to the computation of the cohomology of the cochain complex $C^\ast_{\Waff}(\liet; \cR)$. In fact, we will see in Thm.~\ref{thm:higher_twists} that rationally the spectral sequence collapses on the $E_2$-page. Since $\Waff = \Lambda \rtimes W$ and $W$ is finite, the cohomology computation can be dealt with in a two-step process after rationalisation. We will follow the argument given in \cite[Sec.~3]{paper:AdemCantareroGomez}. Let 
\begin{align*}
	\RQ &= R(\bT) \otimes \Q = \Q[t_1, \dots, t_n]/(t_1 \cdots t_n - 1) \ ,\\
	\RFQ &= R_F(\bT) \otimes \Q \ , \\
	\cRQ &= \cR \otimes \Q\ .
\end{align*}
In this section we will compute $H^*_{\Waff}(\liet, \cRQ)$ using regular sequences and relate it back to $K_*^G(C^*\cE) \otimes \Q$ later. Let $F \colon (\Viso, \oplus) \to (\Vgr, \otimes)$ be a non-trivial exponential functor, i.e.\ we have $\deg(F(t)) > 0$.

\begin{lemma} \label{lem:regseq1}
	Let $m \in \N$. The sequence 
	\[
		(t_2^m - t_1^m,\ t_3^m - t_2^m,\ \dots,\ t_{n-1}^m - t_{n-2}^m,\ - t_1^m\cdots t_{n-2}^mt_{n-1}^{2m})
	\] 
	is a regular sequence in $\hat{R}_\Q = \Q[t_1, \dots, t_{n-1}]$.
\end{lemma}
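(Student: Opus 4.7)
The plan is to reduce regularity to a statement about a finite zero set, using the Cohen--Macaulay property of the polynomial ring. The ring $\hat{R}_\Q = \Q[t_1, \ldots, t_{n-1}]$ is a graded polynomial ring and hence Cohen--Macaulay of Krull dimension $n-1$. Every element of the given sequence is homogeneous of positive degree. For such a graded Cohen--Macaulay ring, a homogeneous sequence $(f_1, \dots, f_r)$ of positive degree is regular if and only if $\dim \hat{R}_\Q/(f_1, \dots, f_r) = (n-1) - r$. Since our sequence has length exactly $n-1$, it suffices to show that the quotient is Artinian, which by the graded Nullstellensatz is equivalent to showing that the only common zero of the polynomials in $\overline{\Q}^{n-1}$ is the origin.

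So the first step is to invoke this criterion and replace the algebraic regularity condition by the geometric statement that the zero locus collapses to $\{0\}$. The second step is the explicit check: suppose $(t_1, \dots, t_{n-1}) \in \overline{\Q}^{n-1}$ is a simultaneous zero. The first $n-2$ polynomials yield
\[
t_1^m = t_2^m = \cdots = t_{n-1}^m =: c.
\]
Substituting into the last polynomial gives $-c^{n-2}\cdot c^2 = -c^n = 0$, so $c=0$, forcing every $t_i = 0$. This finishes the argument.

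There is really no main obstacle; the only subtle point is keeping track of the exponent in the last polynomial (note that $t_{n-1}$ appears to the power $2m$, which is exactly what makes the count $c^{n-2} \cdot c^2 = c^n$ work out cleanly, giving a nontrivial equation that forces $c = 0$). One could alternatively give a direct inductive argument, quotienting by $t_2^m - t_1^m$ first and checking non-zero-divisibility at each stage, but the Cohen--Macaulay plus zero-locus approach is cleaner and more transparent.
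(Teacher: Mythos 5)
Your proof is correct, but it follows a genuinely different route from the paper's. You invoke the Cohen--Macaulay property of $\Q[t_1,\dots,t_{n-1}]$: for homogeneous elements of positive degree, a sequence of length $r$ is regular if and only if the quotient has Krull dimension $(n-1)-r$, and since your sequence has length $n-1$ this reduces to showing, via the graded Nullstellensatz, that the only common zero in $\overline{\Q}^{\,n-1}$ is the origin --- which your computation $t_1^m=\dots=t_{n-1}^m=c$ and $c^n=0$ settles immediately. The paper instead argues regularity directly and elementarily: it first replaces each $t_k^m-t_{k-1}^m$ by $t_k^m-t_1^m$ modulo the earlier elements, then exhibits each intermediate quotient as a free module over a smaller polynomial ring with an explicit monomial basis $\{t_2^{s_2}\cdots t_{k-1}^{s_{k-1}}\,:\,0\le s_j\le m-1\}$, and checks that multiplication by the next element acts diagonally on this basis (and that the last element acts as $-t_1^{nm}$), hence injectively. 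Your approach is shorter and more conceptual but leans on machinery (Cohen--Macaulayness, dimension theory, the Nullstellensatz over $\overline{\Q}$); the paper's is self-contained and yields explicit descriptions of the intermediate quotients, though that extra information is not used later. One small quibble: your parenthetical remark that the exponent $2m$ on $t_{n-1}$ is ``exactly what makes the count work'' overstates its role --- in your argument any positive power would do, since any positive power of $c$ vanishing forces $c=0$; the specific exponent is dictated by the application in the subsequent corollary, not by the zero-locus computation.
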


\begin{proof}
	Multiplication by $t_k^m - t_{k-1}^m$ is the same as multiplication by $t_k^m - t_{1}^m$ in the quotient  $\hat{R}_\Q/(t_2^m -t_1^m, \dots, t_{k-1}^m - t_{k-2}^m)$ for $k \in \{2,\dots, n-1\}$. Hence, it suffices to show that 
	\[
		(t_2^m - t_1^m,\ t_3^m - t_1^m,\ \dots,\ t_{n-1}^m - t_1^m,\ -t_1^{m}\cdots t_{n-2}^mt_{n-1}^{2m})
	\]
	is a regular sequence in $\hat{R}_\Q$. For $3 \leq k \leq n-1$ the quotient 
	\[
		\Q[t_1, \dots, t_{n-1}]/(t_2^m - t_1^m, \dots, t_{k-1}^m - t_1^m)
	\] 
	is free as a $\Q[t_1, t_k, \dots, t_{n-1}]$-module with basis 
	\[
		\{ t_2^{s_2} \cdots t_{k-1}^{s_{k-1}} \ | \ 0 \leq s_j \leq m-1 \text{ for all } j \in \{2, \dots, k-1\} \}\ .
	\]
	The multiplication by $t_k^m - t_1^m$ acts diagonally in the sense that it maps each basis element to a non-zero multiple. In particular, this map is injective.  
	
	The argument for $k = n$ is very similar. Here, the quotient
	\[
		\Q[t_1, \dots, t_{n-1}]/(t_2^m - t_1^m, \dots, t_{n-1}^m - t_1^m)
	\] 
	is again free as a $\Q[t_1]$-module with basis 
	\[
		\{ t_2^{s_2} \cdots t_{n-1}^{s_{n-1}} \ | \ 0 \leq s_j \leq m-1 \text{ for all } j \in \{2, \dots, n-1\} \}\ .
	\]
	The multiplication by $- t_1^m\cdots t_{n-2}^mt_{n-1}^{2m}$ is the same as multiplication by $-t_1^{nm}$, because $t_i^m = t_1^m$ in the quotient for $1 \leq i \leq n-1$. Following the same reasoning as above, this map is again injective. 
\end{proof}

Exactness of the localisation functor immediately yields the following corollary of Lem.~\ref{lem:regseq1}:

\begin{corollary} \label{lem:regseq2}
	The sequence 
	\[
		(F(t_2) - F(t_1), F(t_3) - F(t_2), \dots, F(t_n) - F(t_{n-1}))
	\]
	is regular in $\RFQ$. 
\end{corollary}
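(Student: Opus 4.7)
The plan is to adapt the proof of Lemma~\ref{lem:regseq1} by replacing the monomials $t^m$ with the polynomial $F(t) \in \Q[t]$ associated to the exponential functor $F$. Since $F$ is non-trivial with only positive characters, its character satisfies $F(t) = a_1 t + a_2 t^2 + \dots + a_d t^d$ with $d \ge 1$ and $a_d \in \Q^\times$. The structural property that makes the Lemma's argument go through is that for any $i \ne j$, the element $F(t_i) - F(t_j) \in \Q[t_j][t_i]$ has degree $d$ in $t_i$ with leading coefficient $a_d \in \Q^\times$, exactly paralleling the role of $t_i^m - t_j^m$.

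First I would establish regularity of the first $n-2$ elements in $\hat{R}_\Q$ by following the Lemma's argument step for step. For $3 \le k \le n-1$, the quotient $\hat{R}_\Q/(F(t_2)-F(t_1),\dots,F(t_{k-1})-F(t_1))$ is free as a $\Q[t_1,t_k,\dots,t_{n-1}]$-module with basis $\{t_2^{s_2}\cdots t_{k-1}^{s_{k-1}} : 0 \le s_j \le d-1\}$. Multiplication by $F(t_k)-F(t_1)$, which agrees with $F(t_k)-F(t_{k-1})$ in this quotient, is multiplication by a non-zero element of the polynomial domain $\Q[t_1,t_k]$, hence injective on the free module. Since $\RFQ$ is a localization of $\hat{R}_\Q$ (obtained by inverting $t_1\cdots t_{n-1}$ and the $F(t_i)$'s), exactness of localization transfers regularity of $(F(t_2)-F(t_1),\dots,F(t_{n-1})-F(t_{n-2}))$ to $\RFQ$.

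To handle the final element $F(t_n)-F(t_{n-1})$, I would use $t_n = (t_1\cdots t_{n-1})^{-1}$ in $R_\Q$ and clear denominators by multiplying with the unit $(t_1\cdots t_{n-1})^d$, obtaining $G = \sum_{k=1}^d a_k(t_1\cdots t_{n-1})^{d-k} - F(t_{n-1})(t_1\cdots t_{n-1})^d \in \hat{R}_\Q$, which generates the same principal ideal in $\RFQ$ as $F(t_n)-F(t_{n-1})$. The corollary would then follow once we verify that $G$ is a non-zerodivisor on the $\Q[t_1]$-free quotient $\hat{R}_\Q/(F(t_j)-F(t_1))_{j=2}^{n-1}$, since exactness of localization promotes the resulting regular sequence in $\hat{R}_\Q$ to one in $\RFQ$.

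The main technical obstacle lies precisely in this last step. In the Lemma, the analogous terminal element $-t_1^{nm}$ lies in the subring $\Q[t_1]$ and acts diagonally on the free basis, so injectivity is immediate. For general $F$, however, the element $G$ mixes powers of all variables $t_1,\dots,t_{n-1}$, and one must verify injectivity via a more delicate leading-term analysis on the basis, or alternatively by computing a determinant of the multiplication-by-$G$ matrix over $\Q[t_1]$. I expect the argument to proceed by identifying a highest-weight monomial in a suitable ordering whose coefficient in $G \cdot b$ is a non-zero polynomial in $t_1$, yielding triangularity of the action with non-vanishing diagonal.
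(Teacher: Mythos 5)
Your reduction to $\hat{R}_\Q$ and your treatment of the first $n-2$ elements are correct and essentially parallel what the paper does (localisation is exact, and modulo the earlier relations each $t_j$ satisfies a degree-$d$ polynomial over $\Q[t_1]$ with unit leading coefficient, so the quotient is free and multiplication by $F(t_k)-F(t_1)\in\Q[t_1,t_k]$ is injective). The genuine gap is exactly where you flag it: you never prove that the cleared-denominator element $G=(t_1\cdots t_{n-1})^{d}\,(F(t_n)-F(t_{n-1}))$ is a non-zerodivisor on $Q=\hat{R}_\Q/(F(t_2)-F(t_1),\dots,F(t_{n-1})-F(t_{n-2}))$, and this is the only step of the Corollary that does not follow by mimicking Lemma~\ref{lem:regseq1} verbatim. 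Note that $Q$ is in general not a domain (for $F(t)=t^2$ one has $(t_2-t_1)(t_2+t_1)=0$ in $Q$), so non-vanishing of $G$ is not enough; and unlike the Lemma's terminal element, which modulo the earlier relations becomes $-t_1^{nm}\in\Q[t_1]$ and acts diagonally, $G$ genuinely mixes all the variables, so the diagonal-action trick is unavailable. The ``delicate leading-term analysis'' you announce is precisely the missing proof, so as it stands the proposal is a plan rather than a proof of the decisive step.

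The paper closes this gap with one standard tool that your sketch is implicitly groping toward: in the graded ring $\hat{R}_\Q$ (graded by total degree), a sequence is regular as soon as the sequence of its homogeneous highest-order terms is regular (\cite[Cor.~5.3]{paper:Hemmert}). The top-degree parts of $(F(t_2)-F(t_1),\dots,F(t_{n-1})-F(t_{n-2}),(t_1\cdots t_{n-1})^{m}(F(t_n)-F(t_{n-1})))$ are $a_m(t_2^m-t_1^m),\dots,a_m(t_{n-1}^m-t_{n-2}^m),-a_m\,t_1^m\cdots t_{n-2}^m t_{n-1}^{2m}$, i.e.\ exactly the sequence of Lemma~\ref{lem:regseq1} up to the unit $a_m$ --- this is the whole reason the Lemma's last entry has that peculiar monomial form. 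With this criterion the Corollary is a two-line consequence of the Lemma, and no re-derivation of the first $n-2$ steps is needed either. If you want to avoid citing such a result, you must actually carry out your triangularity/determinant idea, e.g.\ by showing via a valuation-at-infinity (or determinant over $\Q(t_1)$) computation that $G$ remains a unit in $Q\otimes_{\Q[t_1]}\Q(t_1)$; this can be done, but it is not automatic and is absent from your argument.
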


\begin{proof}
	Let $F(t) = \sum_{k=0}^m a_k t^k$ with $a_k \in \Q$, $a_m \neq 0$ and $\deg(F(t)) = m$. Note that the ring $\RQ$ is the localisation of $\hat{R}_{\Q}$ at $t_1 \cdots t_{n-1}$, i.e.\ in $\RQ$ we have $t_1\cdots t_n = 1$. Therefore $t_n^{-m}\,F(t_n) = (t_1\cdots t_{n-1})^m\,F(t_n) \in \hat{R}_\Q$. Moreover, $\RFQ$ is a localisation of $\RQ$. Since localization is an exact functor and $t_1, \dots, t_{n-1}$ are units in $\RFQ$, it suffices to show that 
	\[
		(F(t_2) - F(t_1), \dots, F(t_{n-1}) - F(t_{n-2}), (t_1\cdots t_{n-1})^m(F(t_{n}) - F(t_{n-1})))		
	\]
	is a regular sequence in $\hat{R}_\Q$. The commutative ring $\hat{R}_\Q$ has an $\N_0$-grading by the total degree. Thus, by \cite[Cor.~5.3]{paper:Hemmert} the regularity will follow if it holds for the sequence of homogeneous highest order terms:
	\[
		(a_m(t_2^m - t_1^m), \dots, a_m(t_{n-1}^m - t_{n-2}^m), - a_m t_1^m\cdots t_{n-2}^m\,t_{n-1}^{2m})\ .
	\]
	Since $a_m \neq 0$, the sequence agrees up to multiplication by a unit with the one from Lem.~\ref{lem:regseq1}. Thus, the regularity follows. 
\end{proof}

The following lemma reduces the computations of the $\Waff$-equivariant cohomology groups of $\liet$ with coefficient systems $\cRQ$ to computing the $W$-fixed points of $\Lambda$-equivariant cohomology. The proof makes use of our choice of rational coefficients. We claim no originality for this proof. It is a straightforward adaptation of \cite[Thm.~3.9]{paper:AdemCantareroGomez}. Nevertheless, we include the proof for the convenience of the reader. 

\begin{lemma}
Let $\psi \colon \Lambda \to GL_1(R_F(\bT))$ be the group homomorphism in \eqref{eqn:hom_to_GL1} and let $\cRQ$ be the coefficient system defined in \eqref{eqn:coeff_sys}. There is an isomorphism of $R_F(G) \otimes \Q$-modules
\begin{equation} \label{eqn:Waff_cohomology}
	H^*_{\Waff}(\liet; \cRQ) \cong H^*_{\Lambda}(\liet; \cRQ)^W\ .
\end{equation}
\end{lemma}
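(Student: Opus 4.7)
The plan is to follow the approach of \cite[Thm.~3.9]{paper:AdemCantareroGomez} and exploit the split short exact sequence \eqref{eqn:Weyl_group_ext} together with the fact that $|W|$ is invertible in $\Q \subseteq \RFQ$. In spirit this is a Lyndon--Hochschild--Serre argument: there is a spectral sequence of the form
\[
	E_2^{p,q} = H^p\!\bigl(W;\, H^q_\Lambda(\liet;\cRQ)\bigr) \Rightarrow H^{p+q}_\Waff(\liet;\cRQ),
\]
and since $H^p(W;M) = 0$ for all $p>0$ whenever $M$ is a $\Q[W]$-module, this sequence collapses onto its zeroth column, giving the claimed identification. I would make this precise at the cochain level rather than invoke a Grothendieck spectral sequence abstractly, because the coefficient system $\cRQ$ with its $\Waff$-action from \eqref{eqn:affine_action} requires careful bookkeeping.

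First I would describe both sides via the cellular Bredon complex coming from the $\Waff$-CW structure on $\liet$ with fundamental domain $\Delta^\ell$ (as in Sec.~\ref{subsec:Bredon}). The $\Waff$-orbits of cells are indexed by $I \subset \{0,\dots,\ell\}$ with stabiliser $\widetilde{W}_I$, so
\[
	C^p_\Waff(\liet;\cRQ) = \bigoplus_{|I|=p+1} \RFQ^{\widetilde{W}_I}.
\]
Since $\Lambda \cap \widetilde{W}_I = \{e\}$ (because $q_W$ restricts to an isomorphism $\widetilde{W}_I \to W_I$), each $\Waff$-orbit of cells splits into $|W/W_I|$ many free $\Lambda$-orbits, and the $\Lambda$-equivariant complex becomes
\[
	C^p_\Lambda(\liet;\cRQ) = \bigoplus_{|I|=p+1} \bigoplus_{[\sigma]\in W/W_I} \RFQ.
\]
The splitting $i_W \colon W \to \Waff$ induces a $W$-action on this complex that permutes the inner summands while simultaneously twisting the action on the $\RFQ$-factors through the cocycle $\varphi_I$ of \eqref{eqn:inv_of_quot_map} (this twist accounts for the fact that $i_W(W_I)$ does not equal $\widetilde{W}_I$ in general). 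A direct fixed-point computation, essentially Shapiro's lemma, then shows
\[
	\bigl(C^p_\Lambda(\liet;\cRQ)\bigr)^W \cong \bigoplus_{|I|=p+1} \RFQ^{\widetilde{W}_I} = C^p_\Waff(\liet;\cRQ),
\]
and this identification is compatible with the cellular differentials.

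Finally, because $|W|$ is invertible in $\Q$, the averaging operator $e_W = |W|^{-1}\sum_{w\in W} w$ exhibits $W$-invariants as a retract of the forgetful functor, making $M \mapsto M^W$ exact on $\Q[W]$-modules. Exactness commutes with taking cohomology of cochain complexes, so $H^*\bigl((C^*_\Lambda(\liet;\cRQ))^W\bigr) \cong H^*(C^*_\Lambda(\liet;\cRQ))^W$, and combined with Step~2 this yields the desired isomorphism \eqref{eqn:Waff_cohomology}. The main obstacle I anticipate is Step~2: verifying that the $W$-action induced by $i_W$ on the $\Lambda$-equivariant cochain complex is precisely the one whose fixed points recover the $\Waff$-complex with the correct twisted action from \eqref{eqn:affine_action}. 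This is exactly the same twist by $\varphi_I$ that appeared in Lem.~\ref{lem:cochain_complex}, so the compatibility, while delicate, is dictated by the computations already carried out there.
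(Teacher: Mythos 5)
Your proposal is correct and follows essentially the same two-step strategy as the paper's proof: identify the $\Waff$-equivariant cochain complex with the $W$-invariants of the $\Lambda$-equivariant one, and then use the rational coefficients to commute $W$-invariants with cohomology. The only differences are cosmetic: the paper gets the cochain-level identification immediately from $\mathrm{Hom}_{\Z[\Waff]}(C_*(\liet),\RFQ) = \bigl(\mathrm{Hom}_{\Z[\Lambda]}(C_*(\liet),\RFQ)\bigr)^W$ (so your orbit/Shapiro bookkeeping with the $\varphi_I$-twist, while correct, is not actually needed), and it commutes invariants with cohomology by comparing the two spectral sequences of the double complex $C^*(W;E^*)$ rather than by your averaging-idempotent exactness argument --- both amount to the vanishing of positive-degree rational cohomology of the finite group $W$.
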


\begin{proof}
	Choose a CW-complex structure on $\liet$ such that the action of $\Waff$ is cellular. Let $C_*(\liet)$ be the associated cellular chain complex and denote by $C^*_{\Waff}(\liet; \cRQ)$ the complex of equivariant cochains, see \cite[Sec.~I.6]{book:BredonCohomology}. Let
	\[
		D^* = \text{Hom}(C_*(\liet), R_F(\bT) \otimes \Q)\ .
	\]
	The cochain complex $D^*$ has a $\Z$-linear action by $\Waff$ defined for $d \in D^k$, $w \in \Waff$ and $x \in C_k(\liet)$ by $(w \cdot d)(x) = w \cdot d(w^{-1}x)$, where the dot denotes the $\Waff$-module structure on $R_F(\bT) \otimes \Q$ given by \eqref{eqn:affine_action}. 
	By \cite[Sec.~I.9, eq.~(9.3)]{book:BredonCohomology} the cochain complex $C^*_{\Waff}(\liet; \cRQ)$ is isomorphic to 
	\[
		\text{Hom}_{\Z[\Waff]}(C_*(\liet), R_F(\bT) \otimes \Q) = (D^*)^{\Waff}\ .
	\]
	Since $\Lambda \subseteq \Waff$ is a normal subgroup with quotient $W$, we may take the fixed points in the final equation in two steps. We define
	\[
		E^* = \text{Hom}_{\Z[\Lambda]}(C_*(\liet), R_F(\bT) \otimes \Q)	= (D^*)^\Lambda	\ .
	\]
	Now consider the cohomology of $W \cong S_n$ with coefficients in the cochain complex $E^*$, i.e.\ $H^*(W; E^*)$. The double complex $C^*(G, E^*)$ computing it leads to two spectral sequences: the first one has $E_2$-term
	\[
		\,^IE_2^{p,q} = H^p(W; H^q(E^*))\ .
	\]
	Since $E^*$ is a cochain complex over $\Q$ and $W$ is finite, the groups on the $E_2$-page compute to 
	\[
		\,^IE_2^{p,q} \cong \begin{cases}
			H^q(E^*)^W & \text{if } p = 0 \ ,\\
			0 & \text{else} \ .
		\end{cases}
	\] 
	Interchanging horizontal and vertical directions leads to a second spectral sequence with $E_1$-page
	\[
		\,^{II}E_1^{p,q} = H^q(W; E^p) \cong \begin{cases}
			(E^p)^W & \text{if } q = 0\ , \\
			0 & \text{else}
		\end{cases}
	\]
	with differentials induced by the differential of the cochain complex $E^*$. Hence,
	\[
		\,^{II}E_2^{p,q} \cong \begin{cases}
			H^p((E^*)^W) & \text{if } q = 0 \ ,\\
			0 & \text{else}\ .
		\end{cases}
	\]
	Both spectral sequences collapse on the $E_2$-page without extension problems and both converge to $H^*(W; E^*)$. Therefore we have isomorphisms of $R(G) \otimes \Q$-modules
	\[
		H^*_{\Waff}(\liet; \cRQ) \cong H^*((E^*)^W) \cong H^*(E^*)^W \cong H^*_{\Lambda}(\liet; \cRQ)^W\ . \qedhere 
	\]
\end{proof}

Computing the $E_1$-page of the original spectral sequence \eqref{eqn:spec_seq} therefore reduces to determining the groups $H^*_{\Lambda}(\liet; \cRQ)$ and identifying the $W$-action on them. To determine $H^*_{\Lambda}(\liet; \cRQ)$ we will consider 
\[
	\liet = \{ (x_1, \dots, x_n) \in \R^n \ |\ x_1 + \dots + x_n = 0 \}
\]
as a $\Lambda$-CW-complex in the following way. Let $c_i = (0,\dots, 1, -1, \dots, 0) \in \Lambda$ for $i \in \{1, \dots, \ell\}$ be the element with a $1$ in the $i$th position. The $0$-cells are given by the lattice $\Lambda \subset \liet$. The $k$-cells for $k \geq 1$ are the elements in the $\Lambda$-orbit of the cube spanned by $c_{i_1}, \dots, c_{i_k}$ for each sequence $i_1 < i_2 < \dots < i_k$ with $i_\ell \in \{1, \dots, \ell\}$. We will denote the corresponding $k$-cell by $c_{i_1, \dots, i_k}$. Altogether we can identify integral $k$-chains of $\liet$ with
\[
	C_k(\liet) = \extp^k \Z^\ell \otimes \Z[\Lambda]\ ,
\]
where the exterior power is spanned by the cells $c_{i_1, \dots, i_k}$ and the second tensor factor keeps track of the position in the $\Lambda$-orbit. With $\Z[\Lambda] \cong \Z[s_1^{\pm 1}, \dots, s_\ell^{\pm 1}]$ the boundary operators of this chain complex turn out to be the ones of the Koszul complex for the sequence $(s_1 - 1, \dots, s_\ell - 1)$, i.e.\ 
\[
	\partial_k(c_{i_1, \dots, i_k}) = \sum_{j=1}^k (-1)^{j-1} (s_j-1)\, c_{i_1, \dots, \check{i}_j, \dots, i_k}\ .
\]
For $n =2$ the $\Lambda$-CW-structure and one boundary operator is illustrated in Fig.~\ref{fig:Koszul}. As discussed in the final remark of~\cite[I.9]{book:BredonCohomology} the $\Lambda$-equivariant cohomology of~$\liet$ is the cohomology of the cochain complex
\begin{equation} \label{eqn:hom_cochains}
	\hom_{\Q[\Lambda]}(C_*(\liet) \otimes \Q, \RFQ) \cong \extp^* \Q^\ell \otimes \RFQ \cong \extp^* (\RFQ)^{\ell} \ .
\end{equation}
By the definition of the $\Lambda$-action on $\RFQ$ the element $s_i \in \Z[\Lambda]$ is mapped to $F(t_i)F(t_{i+1})^{-1} \in \RFQ$. Therefore the coboundary operator in the above cochain complex is the one of the dual Koszul complex for the sequence $x_F = (F(t_1)F(t_2)^{-1}-1,\dots, F(t_{n-1})F(t_n)^{-1}-1) \in (\RFQ)^\ell$ given by 
\[
	d_k(y) =  x_F \wedge y\ .
\] 

\begin{figure}[htp]
\centering
\[
\begin{tikzpicture} 	
	\draw[step=1.4cm,draw=gray,fill=red!10,very thin] (-1.4,-1.4) grid (2.8,2.8) rectangle (-1.4,-1.4);
	\path[fill=red!40] (0,0) rectangle (1.4,1.4);
	\foreach \x in {-1,...,2}
    	\foreach \y in {-1,...,2}  
			\draw[color=black!30,fill=black!30] (1.4*\x,1.4*\y) circle (2pt);
	\foreach \x in {-1,...,2}
    	\foreach \y in {-1,...,1}  
			\draw[black!40,->] (1.4*\x,1.4*\y) -- (1.4*\x,1.4*\y + 1.3);
	\foreach \x in {-1,...,1}
    	\foreach \y in {-1,...,2}  
			\draw[black!40,->] (1.4*\x,1.4*\y) -- (1.4*\x + 1.3,1.4*\y);

	\draw[color=black,fill=black] (0,0) circle (2pt);
	\draw[black,->] (0,0) -- (1.3,0) node[midway,below,font=\footnotesize] {$c_1$};
	\draw[black,->] (0,0) -- (0,1.3) node[midway,left,font=\footnotesize] {$c_2$};
	\draw[black!30,->] (1.4,0) -- (1.4,1.3) node[midway,right,font=\footnotesize] {$s_1 c_2$};
	\draw[black!30,->] (0,1.4) -- (1.3,1.4) node[midway,above,font=\footnotesize] {$s_2 c_1$};
	\node[black,font=\footnotesize] at (0.7,0.7) {$c_{12}$};
	
	\draw[->] (1,0.9) arc (30:340:0.4cm);
	
	\node[right] at (4,1) {$\partial c_{12} = c_1 + s_1c_2 - s_2 c_1 - c_2$};
	\node[right] at (4.85,0.3) {$= (s_1-1)c_2 - (s_2-1)c_1$};
\end{tikzpicture}
\]
\caption{\label{fig:Koszul} The $\Lambda$-CW-complex structure of $\liet$ for $n=2$ and the boundary of the $2$-cell $c_{12}$.}
\end{figure}

\begin{lemma} \label{lem:coh_computation}
	We have $H^k_{\Lambda}(\liet; \cRQ) = 0$ for $k \neq \ell$ and 
	\[
		H^{\ell}_{\Lambda}(\liet; \cRQ) \cong \RFQ / (F(t_2) - F(t_1), \dots, F(t_n) - F(t_{n-1}))
	\]
	Moreover, the Weyl group $W \cong S_n$ acts on $H^{\ell}_{\Lambda}(\liet; \cRQ)$ by signed permutations of the variables $t_1, \dots, t_n$.
\end{lemma}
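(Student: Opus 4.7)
The plan is to identify the cochain complex \eqref{eqn:hom_cochains} with the cohomological Koszul complex $K^\bullet(x_F; \RFQ)$ for the sequence $x_F = (F(t_1)F(t_2)^{-1}-1, \dots, F(t_{n-1})F(t_n)^{-1}-1)$, and then to apply the standard Koszul vanishing theorem for regular sequences. My first step is to verify that $x_F$ is itself regular: since $F(t_i) \in \RFQ^\times$, multiplying the $i$-th entry by the unit $F(t_{i+1})$ converts $x_F$ into the sequence $(F(t_1)-F(t_2), \dots, F(t_{n-1})-F(t_n))$, which is regular by Corollary~\ref{lem:regseq2}. Since multiplication by units leaves the successive quotient rings unchanged, the regularity transfers to $x_F$, and moreover the ideals generated agree: $(x_F) = (F(t_2)-F(t_1), \dots, F(t_n)-F(t_{n-1}))$.

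The next step is to invoke the standard fact that for a regular sequence $x = (x_1, \dots, x_\ell)$ in a commutative ring $R$, the wedge-product Koszul complex $\extp^\bullet R^\ell$ with differential $y \mapsto x \wedge y$ has cohomology concentrated in top degree $\ell$ and equal to $R/(x)$. A quick proof is by induction on $\ell$, realising $K^\bullet(x_1,\dots,x_\ell)$ as the mapping cone of multiplication by $x_\ell$ on $K^\bullet(x_1,\dots,x_{\ell-1})$ and using regularity to see that the induced map on cohomology is injective. Applied to $x_F$, this immediately yields the vanishing of $H^k_\Lambda(\liet; \cRQ)$ for $k \neq \ell$ together with the description of $H^\ell_\Lambda(\liet;\cRQ)$.

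For the Weyl-group action I would trace the identifications carefully. In each degree the cochain group factors as $\extp^k\Q^\ell \otimes_\Q \RFQ$, where $\Q^\ell$ has basis dual to the cells $c_i = e_i - e_{i+1}$ of $\Lambda$. The induced $W$-action on $H^\ell$ is therefore the tensor product of the $W$-action on $\extp^\ell \Q^\ell$ with the natural permutation action on $\RFQ$. A direct calculation on the generating transpositions $\sigma = (i,i+1)$ (for instance $\sigma(c_i) = -c_i$, $\sigma(c_{i\pm 1}) = c_{i\pm 1} + c_i$, and $\sigma(c_j) = c_j$ otherwise) shows that $\sigma$ has determinant $\text{sgn}(\sigma)$ on $\Lambda \otimes \Q$, so on the one-dimensional top exterior power it acts by multiplication by $\text{sgn}(\sigma)$. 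Combining the two contributions yields the signed permutation action on $H^\ell$ claimed in the statement.

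The main anticipated obstacle is not a deep mathematical difficulty but rather the bookkeeping of sign conventions (wedge versus contraction Koszul complex, left versus right action on variables, whether the sign attaches to the permutation or its inverse). Once these conventions are fixed consistently the argument reduces to a standard regular-sequence computation together with the direct transposition calculation above.
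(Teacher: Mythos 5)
Your treatment of the vanishing and of the identification of the top cohomology group is correct and is essentially the paper's own argument: after multiplying by the units $F(t_{i+1})$ the sequence $x_F$ becomes the sequence of Corollary~\ref{lem:regseq2}, and the standard Koszul-complex theorem for regular sequences gives $H^k=0$ for $k\neq\ell$ and $H^\ell\cong\RFQ/(F(t_2)-F(t_1),\dots,F(t_n)-F(t_{n-1}))$ (the paper simply cites Eisenbud for this instead of sketching the mapping-cone induction).

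The gap is in the Weyl-group statement. The ``therefore'' in ``the induced $W$-action on $H^\ell$ is therefore the tensor product of the $W$-action on $\extp^\ell\Q^\ell$ with the natural permutation action on $\RFQ$'' is not justified: the degree-wise tensor-product action you describe is not even an action on the cochain complex \eqref{eqn:hom_cochains}, because it does not commute with the differential $y\mapsto x_F\wedge y$. The point is that $\lambda\mapsto\psi(\lambda)-1$ is multiplicative-minus-one, not additive: for $\sigma=(1,2)$ one has $\sigma\cdot c_2=c_1+c_2$, and $\psi(c_1+c_2)-1=F(t_1)F(t_3)^{-1}-1$ differs from $(\psi(c_1)-1)+(\psi(c_2)-1)$, which is what your linear action on $\Q^\ell$ would produce. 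Geometrically this reflects the fact that for $n\geq 3$ the permutation action of $W$ on $\liet$ is not cellular for the cubical $\Lambda$-CW structure built from $c_1,\dots,c_\ell$ (a transposition sends the edge $c_2$ to the diagonal $c_1+c_2$), and even where it is cellular the induced chain map differs from the naive tensor action by translation twists. So the geometrically defined $W$-action on $H^\ell_\Lambda(\liet;\cRQ)$ --- the one that enters $H^*_{\Waff}\cong H^*_\Lambda(\liet;\cRQ)^W$ --- must be realised by genuine (equivariant cellular) chain maps and then compared with the signed permutation action. This is exactly the extra step in the paper's proof: one passes to a subdivision of the chain complex using all positive roots, on which $W$ does act cellularly, and observes that the discrepancy with the naive action consists of translations along roots, which act on the quotient $\RFQ/I_{F,\Q}$ by multiplication by $\psi(\lambda)\equiv 1$, hence trivially. (Alternatively one can argue by naturality of $H^\ell(\Lambda;-)\cong(\text{coinvariants})\otimes\extp^\ell\mathrm{Hom}(\Lambda,\Q)$ in the pair $(\Lambda,\text{coefficients})$.) Your determinant computation $\det(\sigma|_{\Lambda\otimes\Q})=\mathrm{sign}(\sigma)$ is correct and supplies the sign, but without the comparison step the identification of the action on cohomology is not established; the genuine obstacle here is this equivariance issue, not the bookkeeping of sign conventions.
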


\begin{proof}
The elements $F(t_i) \in \RFQ$ are invertible. Therefore the sequence 
\[
	(F(t_1)F(t_2)^{-1} - 1, \dots, F(t_{n-1})F(t_n)^{-1}-1)
\]
is regular by Lem.~\ref{lem:regseq2}. Hence, the first statement follows from classical results about regular sequences, see for example \cite[Cor.~17.5]{book:Eisenbud}. 

Let $I_{F,\Q} = (F(t_1)F(t_2)^{-1} - 1, \dots, F(t_{n-1})F(t_n)^{-1}-1)$ be the ideal generated by the sequence. The permutation action of $W$ on $\liet$ restricts to an action on $\extp^k\Z^\ell$ for each $k \in \{0,\dots, \ell\}$, which extends to an action on $C_*(\liet)$. Hence, $W$ also acts on the cochain complex $\hom_{\Z[\Lambda]}(C_{\ell}(\liet), \RFQ)$ by conjugation and the isomorphism 
\[
	H^\ell_\Lambda(\liet; \cRQ) \cong \RFQ / I_{F,\Q}
\]
is $W$-equivariant if the right hand side is equipped with the signed permutation action. 

In the quotient on the right hand side translations along the roots act trivially. This implies that the cellular action of $W$ on the subdivision of $C_{\ell}(\liet)$ that uses all positive roots induces the same action on the cohomology of the cochain complex $\hom_{\Z[\Lambda]}(C_{\ell}(\liet), \RFQ)$ as the one described above. 
%With respect to the above CW-structure on $\liet$ the $W$-action is not cellular, since we used cells spanned only by the simple roots and not all positive roots. This, however, can be easily corrected, by forming a suitable subdivision of the original CW-structure on $\liet$ that includes all positive roots as $1$-cells. We will denote the resulting chain complex by~$C^{\,\triangle}_*(\liet)$. There is a natural chain map $C^{\,\square}_*(\liet) \to C^{\,\triangle}_*(\liet)$ that induces  
%\[
%	\hom_{\Z[\Lambda]}(C^{\,\triangle}_*(\liet), \RFQ) \to \hom_{\Z[\Lambda]}(C^{\,\square}_*(\liet), \RFQ)
%\] 
%via pullback. This gives an isomorphism on cohomology. The $W$-action on $H_{\Lambda}^{\ell}(\liet, \cRQ)$ is induced by the conjugation action on $\hom_{\Z[\Lambda]}(C^{\,\triangle}_{\ell}(\liet), \RFQ)$. The isomorphism between $H_{\Lambda}^{\ell}(\liet, \cRQ)$ and the quotient of $\RFQ$ arises from the evaluation of $\varphi \colon C^{\,\triangle}_{\ell}(\liet) \to \RFQ$ at an $\ell$-cell $C_0$. A brief computation shows that the translational contributions from the conjugation action cancel out. Therefore the element $\sigma \in W$ acts by $(\sigma \cdot \varphi)(C_0) = \text{sgn}(\sigma)\,\sigma \ast \varphi(C_0)$, where $\ast$ denotes the $W$-action on $\RFQ$ by permuting the variables. The sign representation keeps track of the change in orientation of the simplices. This proves the last statement.
\end{proof}

Let $I_{F,\Q} = (F(t_2) - F(t_1), \dots, F(t_n) - F(t_{n-1}))$ as in the last lemma. We adopt the following notation to distinguish the ordinary permutation action from its signed counterpart: If we consider $\RFQ$ with the action by signed permutations, then we denote it by $\sgn{\RFQ}$, otherwise by $\RFQ$. Note that the ideal $I_{F,\Q}$ is invariant under both $W$-actions. Hence, we will write $\sgn{I_{F,\Q}}$ if we consider it with the signed permutation action. Taking invariants with respect to the action of a finite group is an exact functor on rational vector spaces. Thus, Lem.~\ref{lem:coh_computation} immediately gives
\[
	H^{\ell}_{\Waff}(\liet; \cRQ) \cong H_{\pi_1(\bT)}^{\ell}(\liet; \cRQ)^W \cong (\sgn{\RFQ})^W / (\sgn{I_{F,\Q}})^W\ .
\]
Let $\Delta \in \Q[t_1, \dots, t_n]$ be the Vandermonde determinant. Multiplication by $\Delta^{-1}$ induces an isomorphism of $\RFQ^W$-modules
\begin{equation} \label{eqn:antisym_to_sym}
	\Psi \colon (\sgn{\RFQ})^W \to \RFQ^W \quad, \quad p \mapsto \frac{p}{\Delta}\ .
\end{equation}

Let $p \in \Q[t]$. As we will see in the next lemma we will need extended versions of the Schur polynomials, which are defined as follows: 
\begin{equation} \label{eqn:def_a}
	a_{(p, \lambda_2 \dots, \lambda_n)}(t_1, \dots, t_n) = 
	\det\begin{pmatrix}
		p(t_1) & p(t_2) & \dots & p(t_n) \\
		t_1^{\lambda_2 + n-2} & t_2^{\lambda_2 + n-2} & \dots & t_n^{\lambda_2 + n-2} \\
		t_1^{\lambda_3 + n-3} & t_2^{\lambda_3 + n-2} & \dots & t_n^{\lambda_3 + n-3} \\
		\vdots & \vdots & \ddots & \vdots \\
		t_1^{\lambda_n} & t_2^{\lambda_n} & \dots & t_n^{\lambda_n} 
	\end{pmatrix}\ .
\end{equation}

\begin{lemma} \label{lem:PieriRule}
	Let $p \in \Q[t]$, let $q(t) = p(t)t$ and let $e_1(t_1, \dots, t_n) = t_1 + \dots + t_n$ be the first elementary symmetric polynomial. Then
	\[
		a_{(p, 1,0, \dots, 0)} = a_{(p,0,0, \dots, 0)} \cdot e_1 - a_{(q,0,0, \dots, 0)}\ . 
	\]	 
\end{lemma}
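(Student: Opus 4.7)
The plan is to prove the identity by expanding all three determinants along their top row and then recognising the resulting expressions via the bi-alternant formula for Schur polynomials.

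First I would expand the determinant $a_{(p,0,0,\dots,0)}$ along its first row. For each $k \in \{1,\dots, n\}$, the complementary minor obtained by deleting the first row and the $k$-th column is the determinant with rows $t_j^{n-2}, t_j^{n-3}, \dots, 1$ for $j \neq k$, which is precisely the Vandermonde determinant $V_k := V(t_1, \dots, \hat{t}_k, \dots, t_n)$. So
\[
a_{(p,0,\dots,0)} = \sum_{k=1}^n (-1)^{k+1} p(t_k)\, V_k .
\]
The minors appearing when expanding $a_{(q,0,\dots,0)} = a_{(tp,0,\dots,0)}$ along the first row are literally the same $V_k$, so $a_{(q,0,\dots,0)} = \sum_k (-1)^{k+1} t_k p(t_k)\, V_k$.

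Next I would analyse $a_{(p,1,0,\dots,0)}$. The exponents in the rows below the top are $(n-1, n-3, n-4, \dots, 0)$; i.e.\ the Vandermonde exponent sequence $(n-2,n-3,\dots,0)$ with $n-2$ replaced by $n-1$. Expanding along the first row, the minor at column $k$ is $\det(t_j^{\mu_i})$ for $j \neq k$ and $\mu = (n-1, n-3, \dots, 0)$. Writing $\mu_i = \lambda_i + (n-1-i)$ for $i=1,\dots, n-1$ gives $\lambda = (1, 0, \dots, 0)$, and the Jacobi bi-alternant formula identifies this minor with $s_{(1)}(t_1, \dots, \hat{t}_k, \dots, t_n) \cdot V_k = e_1(t_1, \dots, \hat{t}_k, \dots, t_n) \cdot V_k$. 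Hence
\[
a_{(p,1,0,\dots,0)} = \sum_{k=1}^n (-1)^{k+1} p(t_k)\, e_1(t_1, \dots, \hat{t}_k, \dots, t_n) \cdot V_k .
\]

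Finally, writing $e_1(t_1, \dots, \hat{t}_k, \dots, t_n) = e_1(t_1,\dots, t_n) - t_k$ and distributing yields
\[
a_{(p,1,0,\dots,0)} = e_1 \sum_{k=1}^n (-1)^{k+1} p(t_k)\, V_k \;-\; \sum_{k=1}^n (-1)^{k+1} t_k p(t_k)\, V_k = e_1\cdot a_{(p,0,\dots,0)} - a_{(q,0,\dots,0)} ,
\]
which is the desired equation. The only non-routine step is the identification of the exceptional minor with $V_k \cdot e_1$, and this is precisely the bi-alternant formula for the Schur polynomial indexed by the partition $(1)$; after that the identity reduces to a single rearrangement, so I do not anticipate any real obstacle.
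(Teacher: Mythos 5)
Your proof is correct, but it takes a genuinely different route from the paper. The paper reduces to monomials $p(t)=t^k$ by $\Q$-linearity and then splits into cases: for $k\le n-3$ both sides vanish, for $k=n-2$ the identity follows from a row interchange, and for $k\ge n-1$ it divides by $\Delta$ to rewrite everything as Schur polynomials and invokes Pieri's rule $s_{(m)}\cdot e_1 = s_{(m+1)}+s_{(m,1)}$. You instead expand all three determinants along their top row: the minors for $a_{(p,0,\dots,0)}$ and $a_{(q,0,\dots,0)}$ are literally the Vandermonde determinants $V_k$ in the variables with $t_k$ omitted, while the exceptional minor for $a_{(p,1,0,\dots,0)}$ (exponents $n-1,n-3,\dots,0$) is identified via the bialternant formula with $s_{(1)}(t_1,\dots,\hat t_k,\dots,t_n)\,V_k = e_1(t_1,\dots,\hat t_k,\dots,t_n)\,V_k$; the substitution $e_1(t_1,\dots,\hat t_k,\dots,t_n)=e_1-t_k$ then finishes the computation. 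Your argument is uniform in $p$ (no linearity reduction, no low-degree case analysis where the Schur interpretation fails), uses only the easiest instance $\lambda=(1)$ of the bialternant identity, and avoids Pieri's rule altogether; the signs work out because the same minors $V_k$ occur consistently in all three expansions. What the paper's proof buys in exchange is the explicit link to Schur functions and Pieri's rule, which matches the way the generators $q_i/\Delta$ are later interpreted as Schur polynomials (e.g.\ in Rem.~\ref{rem:gen_classical} and Sec.~\ref{sec:potential}), whereas your proof is the more elementary and self-contained one.
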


\begin{proof}
	Both sides are $\Q$-linear in $p$. Hence, it suffices to consider the monomials $p(t) = t^k$ for $k \in \N_0$. For $k \in \{0, \dots, n-3\}$ both sides vanish. 
	
	For $p(t) = t^{n-2}$ the term $a_{(p,0,0,\dots, 0)}$ vanishes, $q(t) = t^{n-1}$ and the matrix underlying $a_{(p,1,0,\dots,0)}$ is obtained from the one for $a_{(q,0,0, \dots, 0)}$ by interchanging the first two rows, producing a sign in the determinant. Hence, the equation holds in this case as well.  

	Now let $p(t) = t^k$ with $k \geq n-1$. In this case, we can express $a$ in terms of Schur polynomials:
	\[
		\frac{1}{\Delta}\,a_{(p, \lambda_2, \dots, \lambda_n)} = s_{(k-(n-1), \lambda_2, \dots, \lambda_n)}\ .
	\]
	By Pieri's rule we have
	\[
		s_{(k-(n-1), 0,0, \dots, 0)} \cdot e_1 = s_{(k+1-(n-1), 0,0, \dots, 0)} + s_{(k-(n-1), 1,0, \dots, 0)}
	\]
	and therefore also \( a_{(p, 1,0, \dots, 0)} = a_{(p,0,0, \dots, 0)} \cdot e_1 - a_{(q,0,0, \dots, 0)} \).
\end{proof}

\begin{lemma} \label{lem:generators}
	The $\RFQ^W$-submodule $(\sgn{I_{F,\Q}})^W$ of $(\sgn{\RFQ})^W$ is generated by the $n-1$ antisymmetric polynomials $q_i$ for $i \in \{0,\dots, n-2\}$ defined by
	\[
		q_i(t_1, \dots, t_n) = \det 
		\begin{pmatrix}
			F(t_1)t_1^i & F(t_2)t_2^i & \dots & F(t_n)t_n^i \\
			t_1^{n-2} & t_2^{n-2} & \dots & t_n^{n-2} \\
			\vdots & \vdots & \ddots & \vdots \\
			t_1 & t_2 & \dots & t_n \\
			1 & 1 & \dots & 1
		\end{pmatrix}\ .
	\]
\end{lemma}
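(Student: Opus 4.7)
The plan is to realise $(\sgn{I_{F,\Q}})^W$ as the image of a canonical $W$-equivariant surjection and then identify that image. Let $V_n = \{(a_1, \dots, a_n) \in \Q^n : \sum a_i = 0\}$ be the standard representation of $W = S_n$, and consider
\[
	\mu \colon \RFQ \otimes_\Q V_n \to I_{F,\Q}\ , \qquad \sum_i r_i \otimes e_i \mapsto \sum_i r_i\, F(t_i)\ .
\]
This map is $W$-equivariant and surjective since $F(t_i) - F(t_j) = \mu(e_i - e_j)$ for every pair $i \neq j$. Because taking the sign-isotypic part of a $W$-representation is exact in characteristic zero, $\mu$ induces a surjection $(\sgn{\RFQ \otimes V_n})^W \twoheadrightarrow (\sgn{I_{F,\Q}})^W$, so it suffices to produce an $\RFQ^W$-basis of the left-hand side and compute its image.

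To describe $(\sgn{\RFQ \otimes V_n})^W$, I would use that an antisymmetric tuple $(r_1, \dots, r_n)$ is determined by $r_1$, which must itself be antisymmetric under the stabiliser $S_{n-1} \subset W$ of the index $1$ (acting by permuting $t_2, \dots, t_n$). The Vandermonde criterion gives $(\sgn{\RFQ})^{S_{n-1}} = \Delta_1 \cdot \RFQ^{S_{n-1}}$ with $\Delta_1 = \prod_{2 \leq j < k \leq n}(t_k - t_j)$, and the decomposition $\RFQ^{S_{n-1}} = \bigoplus_{i=0}^{n-1} \RFQ^W\, t_1^i$ (arising from the degree-$n$ minimal polynomial of $t_1$ over $\RFQ^W$) singles out candidate elements $r^{(i)}$ with $r_1^{(i)} = \Delta_1 t_1^i$. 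Unwinding the antisymmetry under the transpositions $(1j)$ and tracking the Vandermonde signs yields the explicit formula
\[
	r_j^{(i)} = (-1)^{j+1}\, \Delta(t_1, \dots, \widehat{t}_j, \dots, t_n)\, t_j^i\ ,
\]
and one verifies consistency under all of $W$ using the $S_{n-1}$-antisymmetry of $r_1^{(i)}$.

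I would then impose the constraint $\sum_j r_j^{(i)} = 0$ cutting out $V_n \subset \Q^n$ via a Laplace expansion along the top row: the sum equals (up to reversing the order of the lower rows) the determinant with top row $(t_j^i)$ and remaining rows $(t_j^0, t_j^1, \dots, t_j^{n-2})$. This vanishes exactly when the top row duplicates a lower one, i.e.\ for $i \in \{0, \dots, n-2\}$, and equals $\pm \Delta$ for $i = n-1$; since $\Delta$ is a non-zero-divisor in $\RFQ$, the kernel of the constraint is the free $\RFQ^W$-submodule on $r^{(0)}, \dots, r^{(n-2)}$. Running the same Laplace expansion with the extra factor $F(t_j)$ inserted in the top row gives $\mu(r^{(i)}) = \pm q_i$ after the analogous row reversal, so that $(\sgn{I_{F,\Q}})^W$ is generated over $\RFQ^W$ by $q_0, q_1, \dots, q_{n-2}$. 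I expect the main technical obstacle to be the careful sign bookkeeping in the two Laplace expansions and in the formula for $r_j^{(i)}$; once these are sorted out, the argument reduces to classical invariant theory for $W=S_n$ acting on $\RFQ$.
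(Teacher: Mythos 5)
Your proposal is correct, but it takes a genuinely different route from the paper's proof of Lem.~\ref{lem:generators}. The paper first reduces from $\RFQ$ to the polynomial ring $\Q[t_1,\dots,t_n]$, then uses the antisymmetrisation map $\theta(p)=\tfrac{1}{n!}\sum_{\sigma\in W}\sigma\cdot p$, which carries the ideal onto its antisymmetric $W$-isotypic part, and evaluates $\theta\bigl((F(t_2)-F(t_1))\,t_1^{k_1}\cdots t_n^{k_n}\bigr)$ over the Artin basis $\{t_1^{k_1}\cdots t_n^{k_n}\,:\,0\le k_i\le n-i\}$ of $\Q[t_1,\dots,t_n]$ over its $W$-invariants; the boundary cases $k_1=n-1$ need the Pieri-type identity of Lem.~\ref{lem:PieriRule}. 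You instead realise $I_{F,\Q}$ as the image of the $W$-equivariant, $\RFQ$-linear map $\mu\colon \RFQ\otimes V_n\to I_{F,\Q}$ built from the standard representation of $S_n$, invoke exactness of sign-isotypic components in characteristic zero, and identify the sign-isotypic part of $\RFQ\otimes V_n$ as the free $\RFQ^W$-module on $r^{(0)},\dots,r^{(n-2)}$ via the ``determined by the first coordinate'' description, the Vandermonde criterion for $S_{n-1}$-antisymmetric elements, and the freeness $\RFQ^{S_{n-1}}=\bigoplus_{i=0}^{n-1}\RFQ^W\,t_1^{i}$; the Laplace expansions then give $\mu(r^{(i)})=\pm q_i$, so both inclusions of the lemma come out in one stroke and Pieri's rule is never needed. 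Your route buys a structural explanation of why there are exactly $n-1$ generators (the constraint $\sum_j r_j=0$ kills precisely $r^{(n-1)}$, whose component sum is $\pm\Delta$, a non-zero-divisor since $\RFQ$ is a domain), while the paper's route stays with elementary determinant manipulations once the reduction to the polynomial ring is done. Two points you should make explicit to close the argument: first, the two classical facts you quote (the Vandermonde criterion and the rank-$n$ freeness of the $S_{n-1}$-invariants over the $W$-invariants) are standard for $\Q[t_1,\dots,t_n]$ and need the routine remark that they persist in $\RFQ$, i.e.\ after dividing by the $W$-invariant relation $t_1\cdots t_n=1$ and localising at the $W$-invariant element $F(t_1)\cdots F(t_n)$ --- the same kind of reduction the paper performs explicitly at the start of its proof; second, the surjectivity of $\mu$ and the containment of its image in $I_{F,\Q}$ (via telescoping $F(t_i)-F(t_1)$) deserve a line each. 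With those remarks added, your sketch is a complete and somewhat slicker proof of the lemma.
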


\begin{proof}
	Let $R_\Q = R(\bT) \otimes \Q \cong \Q[t_1, \dots, t_n]/(t_1\cdots t_n - 1)$ and let $I_\Q \subseteq R_\Q$ be the ideal generated by $(F(t_2) - F(t_1), \dots, F(t_n) - F(t_{n-1}))$. As above $S_n$ acts by permutations or by signed permutations on $R_\Q$ and $I_\Q$. If $S_n$ acts on $R_\Q$ and~$I_\Q$ via signed permutations, we denote this by $\sgn{R_\Q}$ and $\sgn{I_\Q}$, respectively. 
	
	Observe that $F(t_1 + \dots + t_n) \in \Q[t_1, \dots, t_n]^W$ and let 
	\[
		((\sgn{R_\Q})^W)_F = (\sgn{R_\Q})^W[F(t_1 + \dots + t_n)^{-1}]\ .
	\] 
	We claim that $((\sgn{R_\Q})^W)_F \cong (\sgn{\RFQ})^W$ and will prove this first:
	The $W$-equivariant $R_\Q^W$-module homomorphism $\sgn{R_\Q} \to \sgn{\RFQ}$ induces a homomorphism $(\sgn{R_\Q})^W \to (\sgn{\RFQ})^W$. Since multiplication by $F(t_1 + \dots + t_n)$ is invertible in the codomain, this map gives rise to the module homomorphism $((\sgn{R_\Q})^W)_F \to (\sgn{\RFQ})^W$, which is injective, because it can be obtained from a restriction of the injective map $\sgn{R_\Q} \to \sgn{\RFQ}$ by localisation, which is exact. Let 
	\[
		\frac{p}{q} \in (\sgn{\RFQ})^W
	\]
	with $p \in \sgn{R_\Q}$ and $q = F(t_1 + \dots + t_n)^{k}$ for some $k \in \N_0$. Since $q$ is $W$-invariant, the condition $\sigma \cdot \tfrac{p}{q} = \tfrac{p}{q}$ implies $\sigma \cdot p = p$ for all $\sigma \in W$. Thus $ \tfrac{p}{q} \in ((\sgn{R_\Q})^W)_F$. Hence, $((\sgn{R_\Q})^W)_F \to (\sgn{\RFQ})^W$ is an isomorphism. Therefore it suffices to show that $(\sgn{I_\Q})^W \subset (\sgn{R_\Q})^W$ is the $R_\Q^W$-submodule generated by $q_0, \dots, q_{n-2} \in (\sgn{R_\Q})^W$.  
	
	Let $P_\Q = \Q[t_1, \dots, t_n]$ and denote by $\sgn{P}_\Q$ the $P_\Q$-module equipped with its natural $W$-action by signed permutations. Observe that the quotient map $\pi \colon \sgn{P_\Q} \to \sgn{R_\Q}$ is $W$-equivariant. Let $J_\Q \subset P_\Q$ be the ideal generated by $(F(t_2) - F(t_1), \dots, F(t_n) - F(t_{n-1}))$. Note that $\pi(J_\Q) = I_\Q$. Hence, it suffices to see that $(\sgn{J_\Q})^W \subset (\sgn{P_\Q})^W$ is the $P_\Q^W$-submodule generated by $q_0, \dots, q_{n-2} \in (\sgn{P_\Q})^W$.  
	
	Now consider the antisymmetrisation map, i.e.\ the $P_\Q^W$-module homomorphism that averages over the $W$-action: 
	\[
		\theta \colon P_\Q \to (\sgn{P_\Q})^W \quad, \quad p \mapsto \frac{1}{n!} \sum_{\sigma \in W} \sigma \cdot p\ .
	\] 
	It is surjective and maps $J_\Q$ onto $(\sgn{J_\Q})^W$. Hence, it suffices to prove that $(\sgn{J_\Q})^W = (q_0, \dots, q_{n-2})$. We will first show that $(\sgn{J_\Q})^W \subseteq (q_0, \dots, q_{n-2})$. Note that $\theta$ is anti-equivariant in the sense that $\theta(\sigma \ast p) = \text{sign}(\sigma)\,\theta(p)$, where $\sigma \ast p$ denotes the (unsigned) permutation action of $\sigma \in W$ on $p \in P_\Q$. It suffices to show that for all $p_2, \dots, p_n \in P_\Q$
	\[
		\theta( (F(t_2) - F(t_1))\,p_2 + \dots + (F(t_n) - F(t_{n-1}))\,p_n ) \in (q_0, \dots, q_{n-2})\ .
	\] 
	For each $i \in \{2, \dots, n\}$ let $\sigma_i \in W$ be the permutation interchanging $2 \leftrightarrow i$ and $1 \leftrightarrow (i-1)$. Then 
	\[
		\theta( (F(t_i) - F(t_{i-1}))p_i ) = \pm \theta( (F(t_2) - F(t_{1})) (\sigma_i \ast p_i) )\ .
	\] 
	Hence, by linearity of $\theta$ it suffices to consider $\theta( (F(t_2) - F(t_1))\,p)$ with $p \in P_\Q$. 
	
	Note that $P_\Q$ is a free $P_\Q^W$-module with basis $\{ t_1^{k_1}\cdots t_n^{k_n}\ |\ 0 \leq k_i \leq n-i \}$ by \cite[p.~41]{book:ArtinGalois}. Thus, it suffices to see that each $\theta( (F(t_2) - F(t_1))\,t_1^{k_1}\cdots t_n^{k_n})$ is a $P_\Q^W$-linear combination of $q_0, \dots, q_{n-2}$. 
	
	We start by computing $\theta(F(t_1)\,t_1^{k_1} \cdots t_n^{k_n})$. By the Leibniz formula this antisymmetrisation can be written as the following determinant:
	\begin{equation}
		\label{eqn:det1}
		\det\begin{pmatrix}
			F(t_1)t_1^{k_1} & F(t_2)t_2^{k_1} & \dots & F(t_n)t_n^{k_1} \\
			t_1^{k_2} & t_2^{k_2} & \dots & t_n^{k_2} \\
			\vdots & \vdots & \ddots & \vdots \\
			t_1^{k_n} & t_2^{k_n} & \dots & t_n^{k_n} 
		\end{pmatrix}\ .
	\end{equation}
	For this determinant to be non-zero the condition $0 \leq k_j \leq n-j$ enforces $k_j = n-j$ for $j \in \{2,\dots,n\}$, otherwise we would have two identical rows in the matrix. In the same way, $\theta(t_1^{k_1}F(t_2)t_2^{k_2}\,t_3^{k_3} \cdots t_n^{k_n})$ can be expressed as the determinant
	\begin{equation}	
		\label{eqn:det2}
		\det\begin{pmatrix}
			t_1^{k_1} & t_2^{k_1} & \dots & t_n^{k_1} \\
			F(t_1)t_1^{k_2} & F(t_2)t_2^{k_2} & \dots & F(t_n)t_n^{k_2} \\
			t_1^{k_3} & t_2^{k_3} & \dots & t_n^{k_3} \\
			\vdots & \vdots & \ddots & \vdots \\
			t_1^{k_n} & t_2^{k_n} & \dots & t_n^{k_n} 
		\end{pmatrix}\ .
	\end{equation}
	Thus, $\theta( (F(t_2) - F(t_1))\,t_1^{k_1}\cdots t_n^{k_n}) = 0$ unless $k_j = n-j$ for $j \in \{3, \dots, n\}$. If $k_1 \in \{0, \dots, n-3\}$, then 
	\begin{equation} \label{eqn:qk1}
		\theta( (F(t_2) - F(t_1))\,t_1^{k_1}\cdots t_n^{k_n}) = -q_{k_1}(t_1, \dots, t_n)\ ,
	\end{equation}
	because the antisymmetrisation of the term $F(t_2)t_1^{k_1}\cdots t_n^{k_n}$ vanishes. 
	
	If $k_1 = n-2$, then 
	\begin{equation} \label{eqn:qkn-2}
		\theta( (F(t_2) - F(t_1))\,t_1^{k_1}\cdots t_n^{k_n}) = \begin{cases}
			-2q_{n-2}(t_1, \dots, t_n) & \text{if } k_2 = n-2 \ ,\\
			-q_{k_2}(t_1, \dots, t_n) & \text{if } k_2 \neq n-2\ .
		\end{cases}
	\end{equation}
	The only remaining case is therefore $k_1 = n-1$ and $k_2 \in \{0, \dots, n-2\}$. For $k_2 \in \{0,\dots, n-3\}$ the determinant in \eqref{eqn:det1} vanishes and interchanging the first two rows in \eqref{eqn:det2} we see that the rest is equal to $-a_{(p,1,0, \dots, 0)}$ for $p(t) = F(t)t^{k_2}$ as defined in \eqref{eqn:def_a}. By Lem.~\ref{lem:PieriRule} we have for $q(t) = F(t)t^{k_2+1}$
	\begin{align*}
		&\theta( (F(t_2) - F(t_1))\,t_1^{k_1}\cdots t_n^{k_n}) = -a_{(p,1,0, \dots, 0)}(t_1, \dots, t_n) \\
		=\ & (a_{(p,0,\dots, 0)}\cdot e_1)(t_1, \dots, t_n) - a_{(q,0,\dots, 0)}(t_1, \dots, t_n) \\
		=\ & q_{k_2}(t_1, \dots, t_n) \cdot e_1(t_1,\dots, t_n) - q_{k_2+1}(t_1,\dots, t_n)\ .
	\end{align*}
	Finally, for $k_1 = n-1$, $k_2 = n-2$, $p(t) = F(t)t^{n-2}$ and $q(t) = F(t)^{n-1}$ we have 
	\begin{align*}
		\theta( (F(t_2) - F(t_1))\,t_1^{k_1}\cdots t_n^{k_n}) & = \left(-a_{(p,1,0,\dots,0)} + a_{(q,0,\dots, 0)}\right)(t_1,\dots, t_n) \\
		& = q_{n-2}(t_1, \dots, t_n) \cdot e_1(t_1,\dots, t_n)\ .
	\end{align*}
	
	Along the way we have also shown that $(q_0, \dots, q_{n-2}) \subseteq (\sgn{J_\Q})^W$, since we have written the polynomials $q_i$ for $i \in \{0,\dots, n-2\}$ as scalar multiples of  antisymmetrisations of the form $\theta( (F(t_2) - F(t_1))\,t_1^{k_1}\cdots t_n^{k_n})$, see \eqref{eqn:qk1} for $i \in \{0,\dots, n-3\}$ and \eqref{eqn:qkn-2} for $i = n-2$.
\end{proof}

We can now summarise the observations from this and the previous sections as follows:
\begin{theorem} \label{thm:higher_twists}
	The rational graded higher twisted $K$-theory of $G = SU(n)$ for a twist induced by an exponential functor $F \colon (\Viso,\oplus) \to (\Vgr, \otimes)$ with $\deg(F(t)) > 0$ is given by
	\begin{align*}
		K_{n-1}^G(C^*\cE) \otimes \Q &\cong R_F(G) \otimes \Q / J_{F,\Q} \ ,\\
		K_n^G(C^*\cE) \otimes \Q &\cong 0\ ,
	\end{align*}
	where we identify $J_{F,\Q}$ with the ideal in $R_F(G) \otimes \Q$ obtained as the image under the isomorphism $(\RFQ)^W \cong R_F(G) \otimes \Q$. This ideal is generated by 
	\[
		J_{F,\Q} = \left( \frac{q_0}{\Delta}, \dots, \frac{q_{n-2}}{\Delta} \right) \subset (\RFQ)^W
	\]	
	where $\Delta$ is the Vandermonde determinant and the polynomials $q_i$ are defined in Lem.~\ref{lem:generators}.
\end{theorem}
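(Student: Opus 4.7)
The plan is to combine the computational results established earlier in Section~3 with the rationalised spectral sequence \eqref{eqn:spec_seq}. Since $-\otimes\Q$ is exact and preserves the abutment, I rationalise \eqref{eqn:spec_seq} to obtain a spectral sequence with $E_1$-page $\bigoplus_{\lvert I\rvert=p+1} K^{G_I}_q(C^*\cE_{w_I})\otimes\Q$ converging to $K_*^G(C^*\cE)\otimes\Q$. The odd-$q$ rows vanish by \eqref{eqn:spec_seq}, and on each even-$q$ row, Lem.~\ref{lem:cochain_complex} (tensored with~$\Q$) identifies the row together with its $d_1$-differential with the Bredon cochain complex $C^*_{\Waff}(\liet;\cRQ)$ via the isomorphisms $\bigoplus_I \kappa_I$. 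Consequently $E_2^{p,q}\cong H^p_{\Waff}(\liet;\cRQ)$ for $q$ even, and $0$ otherwise.

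Next, I use the identification $H^*_{\Waff}(\liet;\cRQ)\cong H^*_{\Lambda}(\liet;\cRQ)^W$ together with Lem.~\ref{lem:coh_computation} to conclude that $E_2^{p,q}=0$ unless $p=\ell=n-1$ and $q$ is even. Since the $E_2$-page is supported in a single column, every higher differential $d_r$ for $r\geq 2$ has either source or target zero, so the spectral sequence collapses: $E_2=E_\infty$. For each total degree $N$ the abutment filtration on $K_N^G(C^*\cE)\otimes\Q$ therefore has at most one non-zero subquotient $E_\infty^{n-1,\,N-(n-1)}$, which is non-zero only when $N\equiv n-1\pmod 2$. This yields the vanishing $K_n^G(C^*\cE)\otimes\Q=0$ and the isomorphism $K_{n-1}^G(C^*\cE)\otimes\Q\cong H^{n-1}_{\Waff}(\liet;\cRQ)$ with no extension problem to resolve (we are in the last row of non-zero terms).

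It remains to identify this cohomology group with the stated quotient of $R_F(G)\otimes\Q$. Lem.~\ref{lem:coh_computation} gives $H^{n-1}_{\Lambda}(\liet;\cRQ)\cong \sgn{\RFQ}/\sgn{I_{F,\Q}}$ as a $W$-module under signed permutations. Taking $W$-invariants is exact on $\Q$-vector spaces, so $H^{n-1}_{\Waff}(\liet;\cRQ)\cong (\sgn{\RFQ})^W/(\sgn{I_{F,\Q}})^W$. The Vandermonde map $\Psi$ of \eqref{eqn:antisym_to_sym} is an $\RFQ^W$-module isomorphism from $(\sgn{\RFQ})^W$ to $\RFQ^W$, so it descends to an isomorphism with $\RFQ^W/\Psi((\sgn{I_{F,\Q}})^W)$. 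Combined with the restriction isomorphism $\RFQ^W\cong R_F(G)\otimes\Q$ recalled at the start of Sec.~\ref{subsec:maxtorus}, and with the list of generators $q_0,\dots,q_{n-2}$ of $(\sgn{I_{F,\Q}})^W$ provided by Lem.~\ref{lem:generators}, this yields exactly $R_F(G)\otimes\Q/J_{F,\Q}$ with $J_{F,\Q}=(q_0/\Delta,\dots,q_{n-2}/\Delta)$.

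All the substantive content has been packaged into the four preceding lemmas; the only genuine work remaining is the bookkeeping of parities and fixed-points in the spectral sequence and the careful transition from antisymmetric to symmetric invariants via $\Psi$. The main potential pitfall is the collapsing argument and verifying that the Bredon cohomology computation genuinely produces a single-column $E_2$-page, so I would double-check that the identification of rows via $\kappa_I$ in Lem.~\ref{lem:cochain_complex} is compatible with the internal $K$-theoretic periodicity of the spectral sequence; beyond that, no further technical obstacles are expected.
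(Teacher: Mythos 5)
Your proposal is correct and follows essentially the same route as the paper: identify the rationalised even rows of the $E_1$-page with the Bredon cochain complex $C^*_{\Waff}(\liet;\cRQ)$ via Lem.~\ref{lem:cochain_complex}, use $H^*_{\Waff}(\liet;\cRQ)\cong H^*_\Lambda(\liet;\cRQ)^W$ and Lem.~\ref{lem:coh_computation} to get a single-column $E_2$-page and collapse, and then identify the surviving group through Lem.~\ref{lem:generators} and the Vandermonde isomorphism $\Psi$. The only cosmetic difference is the rationalisation mechanism: the paper tensors $C^*\cE$ with the universal UHF algebra $\mathcal{Q}$ and invokes continuity of $K$-theory, whereas you tensor the spectral sequence with $\Q$ directly, which is equally valid here because $\Q$ is flat and the filtration arising from the finite closed cover is finite.
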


\begin{proof}
Continuity of $K$-theory implies $K_*^G(C^*\cE) \otimes \Q \cong K_*^G(C^*\cE \otimes \mathcal{Q})$ for the universal UHF-algebra $\mathcal{Q}$ (equipped with the trivial $G$-action). The $E_1$-page of the spectral sequence \eqref{eqn:spec_seq} for $C^*\cE \otimes \mathcal{Q}$ is given by
\[
	E_1^{p,q} = \begin{cases}
		\bigoplus_{\lvert I \rvert = p+1} R_F(G_I) \otimes \Q & \text{for } q \text{ even}, \\
		0 & \text{for } q \text{ odd}\ . 
	\end{cases}
\]
For even $q$ the lines in the spectral sequence boil down to the cochain complex $C^p_{\Waff}(\liet, \cRQ)$ by Lem.~\ref{lem:cochain_complex}, which computes the $\Waff$-equivariant Bredon cohomology of $\liet$. As observed in \eqref{eqn:Waff_cohomology} we have
\[
	H^p_{\Waff}(\liet, \cRQ) \cong H^p_\Lambda(\liet, \cRQ)^W
\]
which is only non-trivial for $p = n-1$ by Lem.~\ref{lem:coh_computation}. In particular, the spectral sequence collapses on the $E_2$-page giving $K_{n-1}^G(C^*\cE) \otimes \Q \cong H^{n-1}_\Lambda(\liet, \cRQ)^W$ and $K_n^G(C^*\cE) \otimes \Q = 0$. Combining Lem.~\ref{lem:coh_computation} and Lem.~\ref{lem:generators} we obtain 
\[
	H^{n-1}_{\Lambda}(\liet, \cRQ)^W \cong (\sgn{\RFQ})^W/(\sgn{I_{F,\Q}})^W
\]
with $(\sgn{I_{F,\Q}})^W = (q_0, \dots, q_{n-2})$. The $\RFQ^W$-module isomorphism $\Psi$ from~\eqref{eqn:antisym_to_sym} maps the submodule $(\sgn{I_{F,\Q}})^W$ to the ideal $J_{F,\Q}$ in $(\RFQ)^W$. This proves the statement.
\end{proof}

\begin{remark} \label{rem:gen_classical}
	Let us briefly discuss the case of classical twists $SU(n)_k$, i.e.\ $SU(n)$ at level $k$. Because the dual Coxeter number of $SU(n)$ is $n$, the exponential functor corresponding to the classical twist at level $k$ is
	\[
		F = \left(\extp^{\rm top}\right)^{\otimes (n+k)}\ .
	\] 
	With $F(t_j) = (-t_j)^{n+k}$ each generator gives (up to a sign) an alternating polynomial $a_{(\lambda_1, \dots, \lambda_n)}$ as follows:
	\begin{align*}
		& q_i(t_1, \dots, t_n) = (-1)^{n+k}\,a_{(k+i+1,0,\dots,0)}(t_1, \dots, t_n) \\
	 \Rightarrow \quad & \frac{q_i}{\Delta} = (-1)^{n+k}\,s_{(k+i+1,0,\dots,0)} = (-1)^{n+k}\,h_{k+i+1}
	\end{align*}
	Here, $s_{(\lambda_1,\dots,\lambda_n)}$ is the Schur polynomial and the final equation follows from the fact that $s_{(m,0,\dots,0)} = h_m$ for the complete homogeneous symmetric polynomial~$h_m$. Denote the $i$th fundamental weight of $SU(n)$ by $\omega_i$ with $\omega_i(x_1, \dots, x_n) = x_1 + \dots + x_i$ for $(x_1, \dots, x_n) \in \liet$. By the Weyl character formula $h_{k+i+1}$ is the character polynomial of the representation with highest weight $(k+i+1)\,\omega_1$. Hence, our generators agree (up to signs) with the ones found in \cite[Sec.~3.2]{paper:BouwknegtRidout}. 
\end{remark}

\section{Potentials and links to loop group representations}
It is surprising that the rational graded higher twisted $K$-groups of $SU(n)$ still carry a ring structure and are in fact quotients of a localisation of the representation ring. These properties are known to hold for classical twists and seem to be preserved when allowing higher ones. In this section we will see that it is still possible to find a potential generating the ideals underlying higher twists. In addition, we will construct a non-commutative counterpart of the determinant bundle over $LSU(n)$ from the full exterior algebra functor that generalises the central extension classified by the level.

\subsection{A potential for higher twists} \label{sec:potential}
As noted in Thm.~\ref{thm:higher_twists} the $\RFQ^W$-module isomorphism $\Psi$ from~\eqref{eqn:antisym_to_sym} maps the submodule $(\sgn{I_{F,\Q}})^W$ to the ideal $J_{F,\Q}$ in $\RFQ^W$ given by 
\[
	J_{F,\Q} = \left( \frac{q_0}{\Delta}, \dots, \frac{q_{n-2}}{\Delta} \right)\ .
\]
These generators can now be expressed in terms of symmetric polynomials as follows. Let 
\[
	F(t) = \sum_{i=0}^d \mu_i t^i\ .
\]
Let $c_{F,j} = \frac{q_j}{\Delta}$. With $a_{(p, \lambda_2, \dots, \lambda_n)}$ as in \eqref{eqn:def_a} we have 
\begin{align*}
	c_{F,j} & = \frac{1}{\Delta} a_{(F(t)t^j, 0, \dots, 0)} = \sum_{i=0}^d \mu_i\,\frac{1}{\Delta} a_{(t^{i+j}, 0, \dots, 0)} = \sum_{i=0}^d \mu_i s_{(i+j-(n-1), 0,\dots,0)} \\
	& = \sum_{i=0}^d \mu_i c_{i+j-(n-1)} \underset{(*)}{=} \sum_{i=1}^d \mu_i c_{i+j-(n-1)}\ ,
\end{align*}
where $c_k(t_1, \dots, t_n) = \sum_{1 \leq i_1 \leq \dots \leq i_k \leq n} t_{i_1} t_{i_2} \cdots t_{i_k}$ denotes the complete homogeneous symmetric polynomials and $s_{(\lambda_1, \dots, \lambda_n)}$ is the Schur polynomial. Note that $a_{(t^m,0, \dots, 0)} = 0$ for $m < n-1$. Hence, we define $c_k=0$ if $k < 0$. In particular, the summand $\mu_0 c_{j - (n-1)}$ vanishes for $j \in \{0,\dots, n-2\}$, which explains the equality $(*)$. Define
\[
	\bar{c}_k(t_1, \dots, t_n) = \sum_{1 \leq i_1 < \dots < i_k \leq n} t_{i_1} t_{i_2} \cdots t_{i_k}
\]
to be the elementary symmetric polynomials. The notation here is chosen to reflect the connection between the generators of $J_{F,\Q}$ and the universal Chern classes and follows Gepner \cite{paper:Gepner}. Note that $R_\Q^W \cong \Q[\bar{c}_1, \dots, \bar{c}_{n-1}]$ and therefore
\[
	\RFQ^W \cong \Q[\bar{c}_1, \dots, \bar{c}_{n-1}, F(\bar{c}_1)^{-1}]\ .
\]

\begin{prop} \label{prop:potential}
	Let $F \colon (\Viso,\oplus) \to (\Vgr,\otimes)$ be an exponential functor. Let $F(t) \in \Q[t]$ be the character polynomial of $F$ and let $G(t) \in \Q[t]$ be any polynomial integrating $\frac{F(t) - F(0)}{t}$, i.e.\ $G$ satisfies $G'(t) = \frac{F(t) - F(0)}{t}$. Define
	\[
		V(t_1,\dots, t_n) = \sum_{i=1}^n G(t_i) \in \RFQ^W\ .
	\]
	This potential generates the ideal $J_F$ in the sense that for $j \in \{0,\dots, n-2\}$
	\[
		c_{F,j} = (-1)^{n-j}\frac{\partial V}{\partial \bar{c}_{n-(j+1)}} 
	\]
\end{prop}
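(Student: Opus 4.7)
The plan is to prove the identity by treating $V$ as a polynomial in the elementary symmetric polynomials $\bar{c}_1,\dots,\bar{c}_n$ (viewed as algebraically independent for the moment), computing $\partial V/\partial \bar{c}_m$ via implicit differentiation on the polynomial $P(y)=\prod_i(y-t_i)$, and then invoking a classical Lagrange-interpolation identity to recognise the resulting sum as $c_{F,j}$. Since the claim lives in $\RFQ^W \cong \Q[\bar c_1,\dots,\bar c_{n-1},F(\bar c_1)^{-1}]$ and all computations are polynomial in the $\bar c_i$, the relation $\bar c_n = 1$ can be imposed at the end by specialisation, provided we never differentiate with respect to $\bar c_n$ — which is automatic here since $n-(j+1)\in\{1,\dots,n-1\}$.

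First I would write
\[
P(y) = \prod_{i=1}^n(y-t_i) = y^n - \bar c_1 y^{n-1} + \bar c_2 y^{n-2} - \cdots + (-1)^n\bar c_n
\]
and differentiate $P(t_i)=0$ with respect to $\bar c_m$, obtaining
\[
\frac{\partial t_i}{\partial \bar c_m} = \frac{(-1)^{m+1} t_i^{n-m}}{P'(t_i)}.
\]
The chain rule applied to $V(t_1,\dots,t_n)=\sum_i G(t_i)$ then gives
\[
\frac{\partial V}{\partial \bar c_m} = \sum_i G'(t_i)\,\frac{\partial t_i}{\partial \bar c_m} = (-1)^{m+1}\sum_{i=1}^n\frac{G'(t_i)\,t_i^{n-m}}{P'(t_i)}.
\]
Specialising to $m=n-(j+1)$ yields sign $(-1)^{n-j}$ and exponent $n-m=j+1$. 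Using $G'(t)=\bigl(F(t)-F(0)\bigr)/t=\bigl(F(t)-\mu_0\bigr)/t$ the numerator simplifies to $G'(t_i)\,t_i^{j+1}=(F(t_i)-\mu_0)\,t_i^j=\sum_{k=1}^d \mu_k\,t_i^{k+j}$.

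The final and key ingredient is the residue/Lagrange identity
\[
\sum_{i=1}^n \frac{t_i^\ell}{P'(t_i)} = c_{\,\ell-n+1}\qquad(\ell\ge 0),
\]
with the convention $c_k=0$ for $k<0$. This is obtained by expanding the partial-fraction decomposition $1/P(y)=\sum_i 1/(P'(t_i)(y-t_i))$ at $y=1/x$, using $P(1/x)=E(-x)/x^n$ with $E(x)=\prod_i(1+t_i x)$, and matching coefficients against $H(x)=1/E(-x)=\sum_k c_k x^k$. Substituting $\ell=k+j$ into this identity gives
\[
\frac{\partial V}{\partial \bar c_{n-j-1}} = (-1)^{n-j}\sum_{k=1}^d \mu_k\,c_{\,k+j-(n-1)} = (-1)^{n-j}\,c_{F,j},
\]
which is equivalent to the claimed formula after multiplying by $(-1)^{n-j}$.

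I do not expect a serious obstacle; the argument is essentially a careful assembly of standard symmetric-function identities. The only points requiring care are bookkeeping of signs (the $(-1)^{m+1}$ from implicit differentiation and the off-by-one shift between $\ell$ and the index of $c$), the observation that the term $\mu_0$ in $F(t)-\mu_0$ drops out precisely because $c_{j-(n-1)}=0$ for $j\le n-2$ (matching the $(*)$ step earlier in the section), and the verification that specialising $\bar c_n=1$ at the end is legal because neither side of the identity depends on $\partial/\partial\bar c_n$.
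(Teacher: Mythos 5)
Your proposal is correct, and the sign and index bookkeeping checks out: the implicit differentiation gives $\partial t_i/\partial\bar c_m=(-1)^{m+1}t_i^{\,n-m}/P'(t_i)$, the Lagrange/partial-fraction identity $\sum_i t_i^{\ell}/P'(t_i)=c_{\ell-n+1}$ is the classical generating-function fact, and combining them with $G'(t)t^{\,j+1}=\sum_{k\ge 1}\mu_k t^{k+j}$ yields exactly $\partial V/\partial\bar c_{n-(j+1)}=(-1)^{n-j}c_{F,j}$, equivalent to the claim; the vanishing of the $\mu_0$ term and the legitimacy of setting $\bar c_n=1$ after differentiating only with respect to $\bar c_1,\dots,\bar c_{n-1}$ are handled as in the paper. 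The route differs from the paper's in one respect: the paper writes $V=\sum_{i\ge 1}\mu_i V_i$ with $V_m=\tfrac1m\sum_k t_k^m$ and simply cites Gepner's identity $\partial V_m/\partial\bar c_j=(-1)^{j-1}c_{m-j}$, then concludes by linearity, whereas you re-derive that identity from scratch via implicit differentiation of $P(t_i)=0$ together with the residue identity. What your argument buys is self-containedness (it effectively contains a proof of Gepner's formula); what the paper's buys is brevity and an explicit decomposition of the higher potential as a linear combination of the classical level potentials $V_m$, which is the point emphasised in the surrounding discussion. One small point you should make explicit: the implicit-differentiation and partial-fraction steps presuppose distinct roots, i.e.\ $P'(t_i)\neq 0$; since both sides of the final identity are polynomials in $\bar c_1,\dots,\bar c_n$, the identity extends from the Zariski-dense locus of square-free $P$ to all values, which closes this gap with one sentence.
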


\begin{proof}
	Let $V_m(t) = \frac{1}{m}\,\sum_{k=1}^n t_k^m$. The computation in \cite[p.~389]{paper:Gepner} shows 
	\begin{equation} \label{eqn:Vm}
		\frac{\partial V_m}{\partial \bar{c}_j} = (-1)^{j-1} c_{m - j}\ .
	\end{equation}
	Let $F(t) = \sum_{i=0}^d \mu_i t^i$ and note that $\frac{F(t) - F(0)}{t} = \sum_{i=1}^d \mu_i t^{i-1}$. Since we may neglect constant terms, we can without loss of generality assume $G(0) = 0$. We have
	\[
		V(t_1, \dots, t_n) = \sum_{i=1}^d \mu_i \sum_{k=1}^n \frac{t_k^{i}}{i} = \sum_{i=1}^d \mu_i V_{i}\ .
	\]
	Using \eqref{eqn:Vm} the derivatives evaluate for $j \in \{0,\dots, n-2\}$ to
	\[
		\frac{\partial V}{\partial \bar{c}_{n-(j+1)}} = \sum_{i=1}^d \mu_i \frac{\partial V_{i}}{\partial \bar{c}_{n-(j+1)}} = (-1)^{n-j} \sum_{i=1}^d \mu_i c_{i+j-(n-1)} = (-1)^{n-j}c_{F,j}\ . \qedhere
	\]
\end{proof}

\begin{remark}
	The case $SU(n)_{k}$ corresponds to $F = \left(\extp^{\rm top}\right)^{\otimes (n+ k)}$ and therefore $F(t) = (-t)^{n + k}$, where $k \in \N_0$ is the level, $n$ is equal to the dual Coxeter number for $G = SU(n)$ and $t$ is considered to be odd as reflected by the sign. In this case the potential is
	\[
		V(t_1, \dots, t_n) = \frac{(-1)^{n+k}}{n+k} \sum_{i=1}^n t_i^{n+k} = V_{n + k}(t_1, \dots, t_n)\ .
	\]
	which coincides (up to the choice of sign) with the potential in \cite{paper:Gepner}. The generators $c_{F,j}$ boil down to 
	\(
		c_{F,j} = (-1)^{n+k}c_{k+j+1}
	\) for $j \in \{0,\dots, n-2\}$ and Prop.~\ref{prop:potential} retrieves the result from~\cite{paper:Gepner} (up to sign) that 
	\[
		\frac{\partial V}{\partial \bar{c}_{n-(j+1)}} = (-1)^{n-j} c_{F,j} = (-1)^{k-j} c_{k + j + 1} \ .
	\]
\end{remark}

\subsection{Exponential functors as higher determinants} \label{subsec:HigherDet}
This section will be more speculative than the previous ones. We will outline how exponential functors can be used to construct higher determinant bundles over loop groups. We will also briefly address the multiplicativity of our higher twists.

Let $H = H_+ \oplus H_-$ be a separable $\Z/2\Z$-graded Hilbert space with $\dim(H_{\pm}) = \infty$. The \emph{reduced general linear group} $\GLres{H} \subset GL(H)$ consists of operators
\[
	T = \begin{pmatrix}
		a & b \\
		c & d
	\end{pmatrix}
\]
such that $b$ and $c$ are Hilbert-Schmidt (which implies that $a$ and $b$ are Fredholm operators, see \cite[p.~81]{book:PressleySegal}). Consequently, the path-component of the identity $\GLidres{H}$ contains operators $T$ as above such that the index of $a$ vanishes. The unitary counterpart of $\GLres{H}$ is 
\[
	\Ures{H} = \GLres{H} \cap U(H)
\]	
with identity component $\Uidres{H}$ \cite[Def.~6.2.3]{book:PressleySegal}. The group $\Uidres{H}$ fits into a short exact sequence
\begin{equation} \label{eqn:Udet_ext}
	\Udet{H} \to \cE \to \Uidres{H}\ ,
\end{equation}
where
\begin{gather*}
	\Udet{H} = \{q \colon H_+ \to H_+\ :\ q \in U(H_+) \text{ and } q-1 \text{ is trace-class}\} \ ,\\
	\cE = \{ (T,q) \in \Uidres{H} \times U(H_+)\ :\ T = \left(\begin{smallmatrix}
		a & b \\ c & d
	\end{smallmatrix}\right) \text{ and } a - q \text{ is trace-class} \}\ .
\end{gather*}
The group $\Udet{H}$ consists of those unitary operators that have a Fredholm determinant, which provides a group homomorphism 
\(
	\det \colon \Udet{H} \to U(1)
\).
If $u = 1 + t$ for a trace-class operator $t$, then it is given by 
\[
	\det(u) = \sum_{n=0}^\infty \text{Tr}\left(\extp^n t\right)\ .
\]
The determinant can be used to obtain the determinant bundle over $\Uidres{H}$ from the extension \eqref{eqn:Udet_ext} given by 
\begin{equation} \label{eqn:U1-Ures0}
	U(1) \to \cE \times_{\det} U(1) \to \Uidres{H}\ .
\end{equation}
Since $\det$ is surjective, we have $\cE \times_{\det} U(1) \cong \cE/\ker(\det)$. Hence, \eqref{eqn:U1-Ures0} is in fact a central $U(1)$-extension of groups \cite[Sec.~6.6]{book:PressleySegal}.

%As shown in \cite{PressleySegal} this group embeds into a central $U(1)$-extension 
%\begin{equation} \label{eqn:U1-Ures}
%	U(1) \to \widetilde{\Ures{H}} \to \Ures{H}\ .
%\end{equation}
With $G = SU(n)$ let $H = L^2(S^1, \C^n)$. The Fourier decomposition of vectors in $H$ gives rise to a $\Z/2\Z$-grading by letting $H_+$ be the subspace generated by $z^k$ with $k \geq 0$ and $z = e^{i\theta}$. The smooth loop group $LG$ acts by multiplication operators on $H$. Since $G = SU(n)$ is simply-connected, $LG$~is path-connected. In fact, it is not difficult to show (see \cite[Prop.~6.3.1]{book:PressleySegal}) that the representation by multiplication operators factors through
\[
	\rho \colon LG \to \Uidres{H}\ .
\]
The central $U(1)$-extension that gives rise to the level is the pullback of \eqref{eqn:U1-Ures0} with respect to $\rho$, see \cite[Sec.~6.6]{book:PressleySegal}.

We will outline how exponential functors give rise to $C^*$-algebraic counterparts of~\eqref{eqn:U1-Ures0}. For simplicity we will restrict ourselves to the exterior algebra functor $F = \extp^*$, even though the arguments given below will work for a much larger class. Note that $\extp^*$ extends to a functor on the category of countably infinite-dimensional Hilbert spaces and unitary isomorphisms. 

Now take $H = \C^n \otimes \ell^2(\N)$ and denote the canonical Hilbert basis of $\ell^2(\N)$ by $\{e_n \}_{n \in \N}$. Identify $V_k = \bigoplus_{k=1}^m\C^n$ with the subspace spanned by vectors of the form $v \otimes e_i$ for $i \in \{1,\dots, m\}$ and $v \in \C^n$. We have a natural unitary isomorphism 
\[
	\extp^* H = \extp^* (V_k \oplus V_k^\perp) \cong \extp^*V_k \otimes \extp^*V_k^\perp \cong \left(\extp^*\C^n\right)^{\otimes k} \otimes \extp^*V_k^\perp\ ,
\]
which gives rise to a representation $\pi_k \colon \Endo{\extp^*\C^n}^{\otimes k} \to \mathcal{B}(\extp^*H)$. Associativity of the above natural transformation ensures that this extends to a faithful representation $\pi \colon \MF \to \mathcal{B}(\extp^*H)$ of the colimit.

\begin{lemma}
The exterior algebra functor $F= \extp^*$ gives rise to a group homomorphism 
	\[	
		\varphi_{F} \colon \Udet{H} \to U(\MF)\ .
	\]
\end{lemma}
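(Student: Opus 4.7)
The plan is to define $\varphi_F(q)$ as the unique element of $\MF$ whose image under the faithful representation $\pi$ is the second-quantization $\Gamma(q) := \extp^* q \in U(\extp^* H)$, acting by $\extp^* q(v_1 \wedge \cdots \wedge v_j) = qv_1 \wedge \cdots \wedge qv_j$. Functoriality of $\extp^*$ then gives $\Gamma(q_1 q_2) = \Gamma(q_1)\Gamma(q_2)$ and unitarity of $\varphi_F(q)$ follows from $\Gamma(q)$ being unitary, so the real content of the lemma is the assertion $\Gamma(q) \in \pi(\MF)$ for every $q \in \Udet{H}$. First I would dispose of the case that $q - 1$ is supported in some $V_k$: then $q = q|_{V_k} \oplus 1_{V_k^\perp}$ under $H = V_k \oplus V_k^\perp$ and, via the natural identification $\extp^* H \cong \extp^* V_k \otimes \extp^* V_k^\perp$, one has $\Gamma(q) = \extp^*(q|_{V_k}) \otimes \text{id}_{\extp^* V_k^\perp} \in \pi_k(\Endo{\extp^* \C^n}^{\otimes k}) \subset \pi(\MF)$. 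Independence of the chosen $k$ is immediate from the compatibility of $\pi_k \hookrightarrow \pi_{k+1}$ with $\Endo{\extp^* \C^n}^{\otimes k} \hookrightarrow \Endo{\extp^* \C^n}^{\otimes (k+1)}$.

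For general $q$, I would use density and continuity. Unitaries with $q - 1$ of finite rank supported in some $V_k$ are dense in $\Udet{H}$ in the trace-norm topology --- obtained via spectral truncation of the trace class generator $\log q$ on a neighbourhood of the identity, extended by the group structure using connectedness of $\Udet{H}$. The crucial analytic step is the operator-norm continuity estimate
\[ \|\extp^* q - \extp^* q_m\|_{\mathcal{B}(\extp^* H)} \leq C(\|q-1\|_{\text{tr}}, \|q_m - 1\|_{\text{tr}}) \cdot \|q - q_m\|_{\text{tr}} . \]
Granted this, if $q_m \to q$ in trace norm with each $q_m - 1$ of finite rank, then $\pi^{-1}(\Gamma(q_m))$ is Cauchy in $\MF$ and converges to a unitary $\varphi_F(q) \in U(\MF)$ satisfying $\pi(\varphi_F(q)) = \Gamma(q)$; norm-closedness of $\pi(\MF)$ gives $\Gamma(q) \in \pi(\MF)$. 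Multiplicativity of $\varphi_F$ then transports directly from the functoriality of $\extp^*$ via faithfulness of $\pi$.

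The main obstacle is the continuity estimate itself, which is genuinely subtle: naively $\|\extp^j q - \extp^j q_m\|_{\text{op}}$ grows linearly in $j$ on the graded piece $\extp^j H$, so one cannot simply take a supremum across the decomposition $\extp^* H = \bigoplus_j \extp^j H$. The standard route is to factor $\Gamma(q) - \Gamma(q_m) = \Gamma(q_m) \, (\Gamma(q_m^* q) - 1)$ and then use a trace-class bound of the form $\|\Gamma(1 + s) - 1\|_{\text{op}} \leq e^{\|s\|_{\text{tr}}} - 1$ for $s$ trace class, which mirrors the estimate $|\det(1 + s) - 1| \leq e^{\|s\|_{\text{tr}}} - 1$ underlying the ordinary Fredholm determinant. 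This is precisely where the trace-class hypothesis in the definition of $\Udet{H}$ becomes essential: for $q - 1$ merely Hilbert--Schmidt the second quantization can only be implemented by a unitary in $\MF$ up to a scalar phase, so one would obtain a homomorphism only after passing to a central $U(1)$-extension of $\Uidres{H}$ rather than a bona fide map into $U(\MF)$.
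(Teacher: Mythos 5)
Your strategy is sound and ends up close to the paper's, but the two arguments are organised differently, and the differences matter exactly at the points you flag as delicate. The paper proves neither a general trace-norm Lipschitz estimate on $\Udet{H}$ nor a density statement for unitaries supported in the subspaces $V_k$: given $u=1+T$ with $T$ trace class, it uses that the unitary $u$ is diagonalisable, truncates $T$ in its \emph{own} eigenbasis to $T_n$ (so $1+T_n$ is automatically unitary and commutes with $u$), and a telescoping estimate based on $F(V\oplus W)\cong F(V)\otimes F(W)$ and $\lVert F(1+\lambda_k)-1\rVert=\lvert\lambda_k\rvert$ then gives the linear bound $\lVert F(1+T)-F(1+T_n)\rVert\le\lVert T-T_n\rVert_1$ directly; norm-closedness of $\pi(\MF)$ finishes, as in your last step. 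Your route instead needs (a) density of unitaries with $q-1$ finite rank supported in some $V_k$, and (b) the bound $\lVert\Gamma(1+s)-1\rVert\le e^{\lVert s\rVert_1}-1$ applied to $s=q_m^*q-1$. Point (b) is fine in your application, but the way you would actually prove it is by diagonalising the unitary $q_m^*q$ and estimating products of eigenvalues --- i.e.\ the same computation the paper performs once and for all on $u$ itself, so your ``standard route'' lemma buys generality you do not need at the price of re-deriving the paper's estimate (and, as stated for arbitrary trace-class $s$ without a contraction hypothesis, it would need a separate argument). Point (a) is true, but your justification does not deliver it: spectral truncation of $\log q$ yields finite-rank approximants supported on eigenspaces of $q$, which are in general contained in no $V_k$, and connectedness of $\Udet{H}$ does not repair this; you need an extra step, e.g.\ conjugating $1+T_n$ by a finite-rank unitary close to $1$ that moves $\mathrm{span}\{\xi_0,\dots,\xi_n\}$ into some $V_k$, with trace-norm control on the error (the paper has the mirror image of this subtlety, since its finite-rank case is asserted for an arbitrary finite-dimensional $V$ rather than a $V_k$). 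Your closing remark about the Hilbert--Schmidt case and the $U(1)$-extension is plausible context but plays no role in the proof. In short: the approach is correct and its analytic core is essentially equivalent to the paper's, but the paper's choice of approximating sequence (truncation in the eigenbasis of $u$) eliminates the need for both of your auxiliary lemmas; if you keep your structure, the density step needs the fix indicated above.
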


\begin{proof}
It suffices to see that $\pi(F(u)) \in \pi(\MF)$ for $u \in \Udet{H}$. If $u-1$ is a finite rank operator, then $H = V \oplus V^\perp$ for a finite-dimensional Hilbert subspace $V$ such that 
\[
	u = \begin{pmatrix}
		u_0 & 0 \\ 
		0 & 1
	\end{pmatrix}
\]
and $F(u) \in U(F(H))$ corresponds to $u_0 \otimes 1 \colon U(F(V) \otimes F(V^\perp))$ under the exponential transformation, which proves that $F(u) \in U(\MF)$ in this case. Now assume that $u \in U(H)$ is of the form $1 + T$ for a trace-class operator~$T$. Since $u$ is diagonalisable, so is $T$. Choose a Hilbert basis $(\xi_n)_{n \in \N_0}$ of eigenvectors for $T$ corresponding to eigenvalues $\lambda_n$. Define 
\[
	T_n = \sum_{k=0}^n \lambda_k\,\xi_k\,\lscal{}{\xi_k}{\,\cdot\,}\ .
\]
The $T_n$ converge to $T$ in trace norm. To prove the claim it suffices to see that $F(1 + T_n)$ converges to $F(1 + T)$ in norm, which will show that $F(1 + T) \in \pi(U(\MF))$. Let $V_n = \text{span}\{\xi_0, \dots, \xi_n\}$. Using the unitarity of the exponential transformation we obtain the estimates
\begin{gather*}
	\lVert F(1+T) - F(1+T_n) \rVert \leq 
	\left\lVert F(1 + \lambda_{n+1}) \otimes F(\left.(1+T)\right|_{V_{n+1}^\perp}) - 1 \otimes 1 \right\lVert \\
	\leq \lVert F(1 + \lambda_{n+1}) - 1 \rVert +  \left\lVert F(1 + \lambda_{n+2}) \otimes F(\left.(1+T)\right|_{V_{n+2}^\perp}) - 1 \otimes 1 \right\lVert \\
	\leq \sum_{k=n+1}^\infty \lVert F(1+\lambda_{k}) - 1 \rVert \leq \sum_{k=n+1}^\infty \lvert \lambda_k \rvert = \lVert T - T_n \rVert_1\ ,
\end{gather*} 
where the first inequality in the last line follows inductively. To get the last inequality we used that $F(\C) \cong \C \oplus \C$ and  
$F(1 + \lambda_{k}) - 1 =
\left(\begin{smallmatrix}
	0 & 0 \\
	0 & \lambda_{k}
\end{smallmatrix}\right)$. Hence, $F(1+T_n) \in U(\MF)$ converges in norm to $F(1+T) \in \mathcal{B}(F(H))$ proving the statement. 
\end{proof}

As a conclusion of the lemma, we may therefore replace $\det$ by our ``higher determinant'' $\varphi_{F}$ for $F = \extp^*$ to get a determinant bundle of the form 
\begin{equation} \label{eqn:UMF-Ures0}
	U(\MF) \to \cE \times_{\varphi_F} U(\MF) \to \Uidres{H}\ .
\end{equation}
and via pullback a corresponding bundle over the loop group $LG = LSU(n)$
\[
	U(\MF) \to LG_F \to LG\ .
\]
In the case of classical twists the bundle $LG_F$ came equipped with a group structure itself. This seems unlikely here. Nevertheless, the above higher determinant bundle suggests that one should look at representations of $\cE$ as in \eqref{eqn:Udet_ext} on Hilbert $\MF$-modules (or bimodules), which satisfy a suitable positive energy condition and such that $\Udet{H}$ acts through $\varphi_F$ (using the Hilbert module structure). In the case of $F = (\extp^{\rm top})^{\otimes (n+k)}$ this will then boil down to the Verlinde ring at level $k$. 

Concerning the multiplicative structure consider the homotopy given by 
\begin{gather*}
	H \colon SU(n) \times SU(n) \times [0,1] \to SU(2n) \\
	(u,v,t) \mapsto 
	\begin{pmatrix}
		u & 0 \\
		0 & 1 	
	\end{pmatrix}
	\begin{pmatrix}
		\cos(\tfrac{\pi}{2} t) & \sin(\tfrac{\pi}{2} t) \\
		-\sin(\tfrac{\pi}{2} t) & \cos(\tfrac{\pi}{2} t)
	\end{pmatrix}
	\begin{pmatrix}
		1 & 0 \\
		0 & v 	
	\end{pmatrix}
	\begin{pmatrix}
		\cos(\tfrac{\pi}{2} t) & -\sin(\tfrac{\pi}{2} t) \\
		\sin(\tfrac{\pi}{2} t) & \cos(\tfrac{\pi}{2} t)
	\end{pmatrix}
\end{gather*}
between the inclusion $(u,v) \mapsto (\begin{smallmatrix} u & 0 \\ 0 & v \end{smallmatrix})$ and $(u,v) \mapsto (\begin{smallmatrix} uv & 0 \\ 0 & 1 \end{smallmatrix})$. It is equivariant with respect to conjugation in the sense that 
\[
	H(gug^*, gvg^*, t) = 
	\begin{pmatrix}
		g & 0 \\
		0 & g
	\end{pmatrix}
	H(u,v,t)	
	\begin{pmatrix}
		g^* & 0 \\
		0 & g^*
	\end{pmatrix}\ .
\]
If we denote the Fell bundle over $SU(k)$ by $\cE_k$, the two projection maps $SU(n) \times SU(n) \to SU(n)$ by $p_i$ for $i \in \{1,2\}$ and the multiplication map by $\mu \colon SU(n) \times SU(n) \to SU(n)$, then we speculate that 
\begin{align*}
	H_0^*\cE_{2n} &\cong p_1^*\cE_n \otimes p_2^*\cE_n \ ,\\
	H_1^*\cE_{2n} &\cong \mu^*\cE_n \otimes \MF	\ .
\end{align*}
Hence, $H^*\cE_{2n}$ would give an equivariant homotopy between these two Fell bundles. Since after stabilisation by the compact operators the continuous $C(G)$-algebras $C^*(\cE_k)$ correspond to section algebras of locally trivial bundles, which are homotopy-invariant, one obtains an equivariant isomorphism of $C(G)$-algebras 
\[
	C^*(p_1^*\cE_n) \otimes C^*(p_2^*\cE_n) \otimes \bK \cong C^*(\mu^*\cE_n) \otimes \MF \otimes \bK\ .
\]
In the classical case the twist would give a cohomology class $\tau_k \in H^3_G(G,\Z)$. Multiplicativity of these twists is encapsulated by the equation
\[
	p_1^*\tau_k + p_2^*\tau_k = \mu^* \tau_k \quad \in H^3_G(G \times G, \Z)\ .
\]
The above isomorphism seems to be the correct generalisation of this statement and we will explore these ideas in future work.

There are more connections with conformal field theory as well: In \cite[Sec.~5.1.1]{paper:EvansPennig-Twists} we showed that for odd powers of the full exterior algebra twist on~$SU(2)$ the localisation is not necessary (in particular $F(\C)$ is already invertible in this case), and the $K$-groups give fusion rings related to the even part of $SU(2)$. These tadpole graphs also appear as fusion graphs for modules of $SU(3)$ as in e.g.\ \cite[page~12]{paper:EvansPugh}. This then raises the question of when localisation is really present for $SU(n)$ for $n > 2$ and whether the rings derived from the $K$-theory are fusion rings or fusion modules. Then there is the question of categorifying these (fusion) rings or modules to be fusion categories or their modules which is relevant for relating our rings to conformal field theory.

%\footnotesize
\bibliographystyle{plain}
\bibliography{SpectralSequence}

\end{document}